\def\br#1\er{\textcolor{red}{#1}} %
 \def\br#1\er{\textcolor{red}
 {#1}} %
  \def\bb#1\eb{\textcolor{blue}
 {#1}} %
\newcommand{\R}{\mathds R}
\newcommand{\germ}{\mathrm{germ}}
\numberwithin{equation}{section}
\title[Global extension property for sheaves of local fields]{On a monodromy theorem for sheaves of local fields and applications}
\thanks{This activity is supported by the programme ``Young leaders in research'' 18942/JLI/13  by Fundaci\'on S\'eneca, Regional Agency for Science and Technology from the Region of Murcia.   This work has been finsihed during a visit of the second author to the University of S\~ao Paulo supported by the Fapesp grant for visiting researchers with process 2013/1070-7. The first author is partially supported by Fapesp (Funda\c{c}\~ao de Amparo \'a Pesquisa do Estado de S\~ao Paulo, Brazil), Process 2012/11950-7; MICINN-FEDER project MTM2010--18099 and grants FQM-324, P09-FQM-4496, (J. Andaluc\'ia, the latter also with FEDER funds), Spain. The third author is partially sponsored by CNPq and Fapesp, Brazil.}
\date{July 12th, 2015}
\author[J. Herrera]{Jonatan Herrera}
\address{Departamento de Matem\'atica,\hfill\break\indent
Universidade de S\~ao Paulo, \hfill\break\indent Rua do Mat\~ao
1010,\hfill\break\indent CEP 05508-900, S\~ao Paulo, SP, Brazil}
\email{jonatanhf@gmail.com}
\author[M. A. Javaloyes]{Miguel Angel Javaloyes}
\address{Departamento de Matem\'aticas, \hfill\break\indent
Universidad de Murcia, \hfill\break\indent
Campus de Espinardo,\hfill\break\indent
30100 Espinardo, Murcia, Spain}
\email{majava@um.es}
\author[P. Piccione]{Paolo Piccione}
\address{Departamento de Matem\'atica,\hfill\break\indent
Universidade de S\~ao Paulo, \hfill\break\indent Rua do Mat\~ao
1010,\hfill\break\indent CEP 05508-900, S\~ao Paulo, SP, Brazil}
\email{piccione.p@gmail.com}
\begin{document}
\newtheorem{thm}{Theorem}[section]
\newtheorem{prop}[thm]{Proposition}
\newtheorem{lemma}[thm]{Lemma}
\newtheorem{cor}[thm]{Corollary}
\newtheorem{exe}[thm]{Example}
\theoremstyle{definition}
\newtheorem{defi}[thm]{Definition}
\newtheorem{notation}[thm]{Notation}
\newtheorem{conj}[thm]{Conjecture}
\newtheorem{prob}[thm]{Problem}
\newtheorem{rem}[thm]{Remark}
\maketitle

\begin{abstract}
We prove a monodromy theorem for local vector fields belonging to a sheaf satisfying the unique continuation property.
In particular, in the case of \emph{admissible regular} sheaves of local fields defined on a simply connected manifold, we obtain a global extension result for \emph{every} local field of the sheaf. This generalizes previous works of Nomizu \cite{nomizu60}
for semi-Riemannian Killing fields, of Ledger--Obata \cite{LedOba70} for conformal fields, and of Amores \cite{amores79} for fields preserving a $G$-structure of finite type. The result applies to Finsler or pseudo-Finsler Killing fields and, more generally, to affine fields of a spray. Some applications are discussed.
\end{abstract}

\renewcommand{\contentsline}[4]{\csname nuova#1\endcsname{#2}{#3}{#4}}
\newcommand{\nuovasection}[3]{\hbox to \hsize{\vbox{\advance\hsize by -1cm\baselineskip=12pt\parfillskip=0pt\leftskip=3.0cm\noindent\hskip -2.5cm \begin{small}#1\end{small}\leaders\hbox{.}\hfil\hfil\par}$\,$#2\hfil}}
\newcommand{\nuovasubsection}[3]{\hbox to \hsize{\vbox{\advance\hsize by -1cm\baselineskip=12pt\parfillskip=0pt\leftskip=3.3cm\noindent\hskip -2.5cm \begin{Small}#1\end{Small}\leaders\hbox{.}\hfil\hfil\par}$\,$#2\hfil}}
\newcommand{\nuovasubsubsection}[3]{\hbox to \hsize{\vbox{\advance\hsize by -1cm\baselineskip=12pt\parfillskip=0pt\leftskip=3.5cm\noindent\hskip -2.5cm \begin{Small}#1\end{Small}\leaders\hbox{.}\hfil\hfil\par}$\,$#2\hfil}}

\tableofcontents

\section{Introduction}

The question of global extension of locally defined Killing, or conformal fields of a semi-Riemannian manifold
appears naturally in several contexts. For instance, the property of global extendability of local Killing fields in simply connected Lorentzian manifolds plays a crucial role in the celebrated result that the isometry group of a compact simply connected analytic Lorentz manifold is compact, see \cite{Dambra}.
Nomizu's result in \cite{nomizu60} is often used in connection with de Rham splitting theorem to obtain that a real-analytic complete Riemannian manifold $M$ admitting a local parallel field has universal covering $\widetilde M$ which splits metrically as $N\times\mathds R$, see \cite[Theorem~C]{BetSch14} for an example.
Recently, the question of (unique) extensibility of local Killing vector fields has been studied in the context of Ricci-flat Lorentz manifolds, in the setting of the black hole rigidity problem, see \cite{IonKlai13}. The Killing extension property is also one of the crucial steps in the proof of the extensibility of locally homogeneous Lorentz metrics in dimension $3$, see \cite{DumMel14}.
It is to be expected that an analogous extension property should play an important role also for more general sheaves of local vector fields defined on a differentiable manifold.
The purpose of the present paper is to explore this question in geometric problems that have not been studied in the literature as of yet, like for instance in Finsler or pseudo-Finsler manifolds.

For a brief history of the problem, let us recall that the first result in the literature, concerning the existence of a global extension of local Killing fields defined on a simply-connected real-analytic Riemannian manifold, was proved by Nomizu in \cite{nomizu60}. The same proof applies to the semi-Riemannian case. The question of extension of local conformal fields has been studied in Ledger--Obata \cite{LedOba70}. A more conceptual proof of the extendability property was proved by Amores in \cite{amores79} for local vector fields whose flow preserves a $G$-structure \emph{of finite type} of an $n$-dimensional manifold $M$; here $G$ is a Lie subgroup of $\mathrm{GL}(n,\mathds R)$.
When $G$ is the orthogonal or the conformal group of some nondegenerate metric on $\mathds R^n$, then Amores' result reproduces the classic semi-Riemannian or conformal case of Nomizu and Ledger--Obata.

Although the class of $G$-structure automorphisms is quite general, restriction to those of finite type leaves several important examples of geometric structures out of the theory developed in \cite{amores79}. For instance, a Finsler structure on a manifold cannot be described in terms of a $G$-structure of finite type.
Moreover, it is interesting to observe that the sheaf of local vector fields whose flow preserves any $G$-structure has always a Lie algebra structure (see Proposition~\ref{thm:Liealgebrastr}), which seems to be unnecessary for the extendability problem we aim at. Thus, it is natural to extend the results of \cite{amores79, LedOba70, nomizu60}
to arbitrary vector space valued sheaves of local vector fields.
\smallskip

The classical standard reference for global extensions of local objects is found in elementary Complex Analysis, and it is given by the monodromy theorem for holomorphic functions, see for instance \cite[\S 10.3]{Krantz99}. The two essential ingredients for this theorem are:
\begin{itemize}
\item the unique continuation property of holomorphic functions on the complex plane;
\item a suitable notion of \emph{transport} along curves of (germs) of holomorphic functions, which is fixed-endpoint homotopy invariant.
\end{itemize}
The starting point of the theory developed here is the observation that these two ingredients can be reproduced in the more general context of sheaves of local vector fields on an arbitrary differentiable manifold (Definition~\ref{thm:deftransport}). This leads readily to the formulation of an abstract monodromy theorem for a sheaf $\mathcal F$ of vector fields satisfying the unique continuation property, which is discussed in Section~\ref{sec:abstractheory}. The result gives a necessary and sufficient condition for the existence of an extension to a prescribed open set for a given locally defined vector field  (or, equivalently, for a given germ of vector fields) in $\mathcal F$. The condition is given in terms of existence of $\mathcal F$-transport of the germ along a family of curves (Proposition~\ref{thm:monodromyuniquecont}).

A different type of question deals with the existence of global extensions of \emph{all} $\mathcal F$-germs, which is a property of the underlying geometric structure associated with the sheaf $\mathcal F$. Since transport is only a homotopical invariant, it is clear that one has topological obstructions arising from the topology of the manifold, which in our theory has to be assumed simply connected. Having settled this, the next crucial observation is that a necessary condition for the extension property in some sheaf $\mathcal F$ of local vector fields on a manifold $M$, is that the space of $\mathcal F$-germs must have the same dimension at each point of $M$. Namely, if the extension property holds, then the space of germs at each point has the same dimension as the space of globally defined vector fields of the given class. This condition is called \emph{regularity} in the paper (Definition~\ref{thm:defregular}); the reader should be warned that this notion of regularity for sheaves of vector fields is different from the one given in reference\footnote{%
See Section~\ref{sub:Fspecial} for the definition of regularity in reference \cite{SinSte65}.}
\cite[Definition~1.3]{SinSte65}. This leads naturally to a restriction to those sheaves of local fields whose germs at each point span a finite dimensional vector space whose dimension is uniformly bounded: these will be called \emph{bounded rank sheaves}.
Our main abstract result (Theorem~\ref{thm:globalregular}) is that, for a certain class of sheaves of local vector fields, called \emph{admissible}, defined on a simply connected manifold, then the regularity condition is also sufficient to guarantee that local fields extend (uniquely) to globally defined vector fields. By admissible, we mean sheaves of vector fields that have two basic properties: bounded rank and unique continuation, see Subsection~\ref{sub:admissible}.

A proof of Theorem~\ref{thm:globalregular} is obtained by showing that, for an admissible sheaf $\mathcal F$ of local fields, the regularity assumption guarantees the existence of transport of any
$\mathcal F$-germ along any continuous path (Proposition~\ref{thm:existencetransport}), and then applying the monodromy theorem (Proposition~\ref{thm:monodromyuniquecont}).
It is interesting to observe that a somewhat similar proof, using a notion of transport of $1$-jets, had been employed in the original article of Nomizu \cite{nomizu60}, using a differential equation along smooth curves satisfied by Killing vector fields. In the abstract framework considered in the present paper, no such differential equation is available, and one has to resort to purely topological arguments.

As to the regularity assumption, which plays a fundamental role in the statement of Theorem~\ref{thm:globalregular}, in Section~\ref{sec:regularity} we describe two situations in which it is satisfied. A first important case where regularity holds is when the sheaf is real-analytic, and defined by some differential equation with real-analytic coefficients, see Definition~\ref{def:realanaldet} and Proposition~\ref{thm:ADimpliesreg}.
This applies in particular to the sheaf of vector fields whose flow preserves a real-analytic $G$-structure of finite type (Theorem~\ref{thm:FPanalyticregular}).
A second class of examples of regular sheaves is described in terms of the transitivity of the action of the associated pseudo-group of local diffeomorphisms, see Proposition~\ref{thm:transitiveimpliesregular}.
For sheaves of Lie algebras, transitivity is equivalent to the weaker property, often easier to check, that the $0$-jets
of the sheaf span the whole tangent bundle, see Proposition~\ref{thm:transitivities}.

The statement of Theorem~\ref{thm:globalregular} generalizes Nomizu's result \cite{nomizu60}
for semi-Riemannian Killing fields, of Ledger--Obata \cite{LedOba70} for conformal fields, and of Amores \cite{amores79} for fields preserving a $G$-structure of finite type
($G$-fields). Namely, the bounded rank and unique continuation property are easily proven in the case of semi-Riemannian Killing fields and in the case of conformal fields; we show here that they hold in the general case of vector fields whose flow consists of local automorphisms of a $G$-structure of finite type (Section~\ref{sec:fieldsGstructureFinType}). It is worth mentioning that the case of $G$-structures of finite type is studied here with methods different than the ones employed in \cite{amores79}. More precisely, in the present paper we determine an explicit characterization of $G$-fields in terms of compatible connections (see Proposition~\ref{thm:charGfields}), which seems to have an independent interest. This characterization is given by a first order equation involving a compatible connection and its torsion, and it admits a higher order generalization to characterize the jets of any order of a $G$-field. A very interesting observation that can be drawn from this approach is that the unique continuation property for local $G$-fields has nothing to do with the analyticity. On the one hand, it holds for any $G$-structure of finite order (see Proposition~\ref{thm:Nthjetvanishing} and Corollary~\ref{thm:fincontpropfinorderGstruct}); on the other hand, it may fail to hold for analytic $G$-structures that have infinite order (see Remark~\ref{rem:locuniqvsanalyt}).

The result of Theorem~\ref{thm:globalregular}, however, goes well beyond the case of $G$-structures of finite type, considered in \cite{amores79}.
The main example discussed in this paper is the sheaf of local affine fields of a spray, see Section~\ref{sec:sprays}.
Recall that a local field on a manifold $M$ endowed with a spray $S$ is an affine field of $(M,S)$ if its flow preserves the geodesics of $S$.
As a preparatory case, we discuss two important special cases of sprays: Finsler and (conic) pseudo-Finsler structures, see Section~\ref{sec:KillingFinsler} and Section~\ref{sec:pseudoFinsler}. In the case of
Finsler manifolds, we study also local conformal and almost Killing fields. The special features of these two examples are investigated employing auxiliary semi-Riemannian structures: the average Riemannian metric associated with Finsler metrics, and the Sasaki metric associated with (conic) pseudo-Finsler manifolds.
In Section~\ref{sec:sprays}, we will give a differential characterization of affine fields of a spray in terms of the Berwald connection (Proposition~\ref{thm:charaffinefields}), and this will be used to show the admissibility of the sheaf (Proposition~\ref{thm:affineadmissible}). Regularity is also obtained in the analytic case using the differential equation satisfied by the affine fields (Proposition~\ref{regularaffine}).
\section{An abstract extension problem for sheaves of local vector fields}
\label{sec:abstractheory}
We will discuss here an abstract theory concerning the global extension property for sheaves of vector fields
on differentiable manifolds. Some of the ideas employed here have their origin in reference \cite{nomizu60}, where similar results were obtained in the case of Killing vector fields in Riemannian manifolds, although our approach does not use differential equations. A central notion for our theory is a property called unique continuation for sheaves of local vector fields on a manifold.
\subsection{Sheaves with the unique continuation property}
Let $M$ be a differentiable manifold with $\mathrm{dim}(M)=n$, and for all open subset $U\subset M$, let
$\mathfrak X(U)$ denote the sheaf of smooth vector fields on $U$. By smooth we mean of class $C^\infty$, although many of the results of the present paper hold under less regularity assumptions.
Given $V\subset U$ and $K\in\mathfrak X(U)$, we will denote by $K\mapsto K\big\vert_V\in\mathfrak X(V)$ the
restriction map.

For every connected open subset $U$, let $\mathcal F(U)\subset\mathfrak X(U)$ be a subspace, and assume that this family is a vector sub-sheaf
of $\mathfrak X$, namely, if $K\in \mathcal F(U)$ and $V\subset U$, then $K|_V\in \mathcal F(V)$ and if a vector field $K\in \mathfrak X(U)$, $U=\bigcup_{i\in I} U_i$ with $K|_{U_i}\in \mathcal F(U_i)$, then $K\in \mathcal F(U)$.
Elements of $\mathcal F$ will be called \emph{local $\mathcal F$-fields}.
\smallskip

We say that $\mathcal F$ has the \emph{unique continuation property} if,
given an open connected set $U\subset M$ and given $K\in\mathcal F(U)$, there exists a non-empty open subset $V\subseteq U$ such that $K|_V=0$, then $K=0$.

The unique continuation property implies that, given connected open sets $U_1\subset U_2\subset M$, then the map
$\mathcal F(U_2)\ni K\mapsto K\big\vert_{U_1}\in\mathcal F(U_1)$ is injective, and therefore one has a monotonicity property:
\begin{equation}\label{eq:monotonicity}
\mathrm{dim}\big(\mathcal F(U_1)\big)\ge \mathrm{dim}\big(\mathcal F(U_2)\big),\qquad\text{for $U_1\subset U_2$ connected.}
\end{equation}
Given $p\in M$, we define $\mathfrak G^\mathcal F_p$ as the space of \emph{germs at $p$} of local $\mathcal F$-vector fields\footnote{%
The space $\mathfrak G^\mathcal F_p$ is also called the \emph{stalk} of the sheaf $\mathcal F$ at $p$.},
also called \emph{$\mathcal F$-germs at $p$}.
More precisely, $\mathfrak G^\mathcal F_p$ is defined as the quotient of the set:
\[\bigcup_{U}\mathcal F(U),\]
where $U$ varies in the family of connected open subsets of $M$ containing $p$, by the following equivalence relation $\cong_p$:
for $K_1\in\mathcal F(U_1)$ and $K_2\in\mathcal F(U_2)$, $K_1\cong_p K_2$ if $K_1=K_2$ on a non-empty connected open subset  $V\subseteq U_1\cap U_2$.
For $K\in\mathcal F(U)$, with $p\in U$, we will denote by $\germ_p(K)$ the germ of $K$ at $p$, which is
defined as the $\cong_p$-equivalence class of $K$.
It is easy to see that $\mathfrak G_p^\mathcal F$ is a vector space for all $p$; moreover, for all $p\in M$ and all connected open neighborhood $U$ of $p$, the map:
\begin{equation}\label{eq:germmap}
\mathcal F(U)\ni K\longmapsto\germ_p(K)\in\mathfrak G^\mathcal F_p
\end{equation}
is linear  and injective, since if $\germ_p(K)=0$, then by definition there exists $V\subseteq U$ such that $K|_V=0$ and by the unique continuation property, we have that $K=0$.  In particular, if two local $\mathcal F$-fields have the same germ
at some $p\in M$, they coincide in any connected open neighborhood of $p$ contained in their common domain.

We will also use the evaluation map:
\begin{equation}\label{eq:evaluationmap}
\mathrm{ev}_p:\mathfrak G^\mathcal F_p\longrightarrow T_pM,
\end{equation}
defined by $\mathrm{ev}_p(\mathfrak g)=K_p$, where $K$ is any local $\mathcal F$-field around $p$ such that
$\germ_p(K)=\mathfrak g$.
\subsection{Transport of germs of $\mathcal F$-fields}
We will henceforth assume that $\mathcal F$ is a sheaf of local vector fields that satisfies the unique continuation property.
We will now  define the notion of \emph{transport} of
germs of $\mathcal F$-vector fields along curves. The reader should note the analogy between this definition
and the classical notion of analytic continuation along a curve in elementary complex analysis.
\begin{defi}\label{thm:deftransport}
Let $\gamma:[a,b]\to M$ be a curve, and let $\mathfrak g$ be a distribution of $\mathcal F$-germs along $\gamma$, i.e., a map $[a,b]\ni t\mapsto \mathfrak g_t\in\mathfrak G^\mathcal F_{\gamma(t)}$.
We say that $\mathfrak g$ is a \emph{transport of $\mathcal F$-germs along $\gamma$} if the following holds: for all $t_*\in[a,b]$, there exists $\varepsilon>0$, an open neighborhood $U$ of $\gamma(t_*)$ and $K\in\mathcal F(U)$ such that $\gamma(t)\in U$ and $\mathfrak g_t=\germ_{\gamma(t)}(K)$ for all $t\in\left(t_*-\varepsilon,t_*+\varepsilon\right)\cap[a,b]$.
\end{defi}
It is easy to see that initial conditions identify uniquely transports of $\mathcal F$-germs along continuous
curves.
\begin{lemma}\label{thm:uniquetransport}
If $\mathfrak g^{(1)}$ and $\mathfrak g^{(2)}$ are transports of $\mathcal F$-germs along
a continuous path $\gamma:[a,b]\to M$ such that $\mathfrak g^{(1)}_{t_*}=\mathfrak g^{(2)}_{t_*}$
for some $t_*\in[a,b]$, then $\mathfrak g^{(1)}=\mathfrak g^{(2)}$.
\end{lemma}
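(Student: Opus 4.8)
The plan is to run the standard connectedness argument on the coincidence set. I set
\[
A=\bigl\{t\in[a,b] : \mathfrak g^{(1)}_t=\mathfrak g^{(2)}_t\bigr\}.
\]
By hypothesis $t_*\in A$, so $A$ is non-empty; since $[a,b]$ is connected, it suffices to show that $A$ is simultaneously open and closed in $[a,b]$. I would establish both at once by proving that the indicator function of $A$ is locally constant.

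Fix $t_0\in[a,b]$. Applying the definition of transport to each of $\mathfrak g^{(1)}$ and $\mathfrak g^{(2)}$ at $t_0$, I obtain radii $\varepsilon_1,\varepsilon_2>0$, open neighborhoods $U_1,U_2$ of $\gamma(t_0)$, and local fields $K_1\in\mathcal F(U_1)$, $K_2\in\mathcal F(U_2)$ with $\mathfrak g^{(i)}_t=\germ_{\gamma(t)}(K_i)$ on the corresponding parameter intervals. Taking the smaller radius $\varepsilon=\min(\varepsilon_1,\varepsilon_2)$, I then choose a connected open neighborhood $W\subseteq U_1\cap U_2$ of $\gamma(t_0)$ and, shrinking $\varepsilon$ if necessary, arrange that $\gamma(t)\in W$ for every $t$ in $I:=(t_0-\varepsilon,t_0+\varepsilon)\cap[a,b]$; this is possible by continuity of $\gamma$. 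On $W$ the difference $K:=(K_1-K_2)\big\vert_W$ belongs to $\mathcal F(W)$, since $\mathcal F$ is a vector sub-sheaf, and by linearity of the germ map one has, for every $t\in I$, the equivalence
\[
\mathfrak g^{(1)}_t=\mathfrak g^{(2)}_t\quad\Longleftrightarrow\quad \germ_{\gamma(t)}(K)=0 .
\]

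This is where the unique continuation property does the essential work. If $\germ_{\gamma(t)}(K)=0$ for even a single $t\in I$, then $K$ vanishes on a non-empty open subset of the connected set $W$, whence $K\equiv0$ on all of $W$, and consequently $\germ_{\gamma(s)}(K)=0$ for every $s\in I$. Conversely, if $K\not\equiv0$ on $W$, then no germ $\germ_{\gamma(s)}(K)$ can vanish, for otherwise unique continuation would again force $K\equiv0$. Hence the truth value of ``$\mathfrak g^{(1)}_t=\mathfrak g^{(2)}_t$'' is constant on $I$, which is exactly the local constancy of the indicator of $A$.

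Consequently $A$ is open and closed in $[a,b]$; being non-empty and $[a,b]$ being connected, I conclude $A=[a,b]$, that is, $\mathfrak g^{(1)}=\mathfrak g^{(2)}$. The only delicate point, and the step I expect to require the most care, is the propagation carried out in the third paragraph: passing from coincidence of the representatives $K_1,K_2$ at a single parameter value to their coincidence on the \emph{entire} connected neighborhood $W$. This is precisely where the unique continuation hypothesis on $\mathcal F$ is indispensable, since without it the agreement of the two transports at isolated parameters would not spread and $A$ could fail to be closed.
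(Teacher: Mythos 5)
Your proof is correct and follows essentially the same route as the paper: both run the clopen/connectedness argument on the coincidence set $\mathcal A$ and use unique continuation on a common connected neighborhood of $\gamma(t_0)$ to propagate agreement of the two local representatives. The only cosmetic difference is that you treat openness and closedness simultaneously via local constancy of the indicator, working with the difference field $K_1-K_2$, whereas the paper gets openness directly from the definition of transport and uses unique continuation only for closedness.
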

\begin{proof}
The set $\mathcal A=\big\{t\in[a,b]:\mathfrak g^{(1)}_t=\mathfrak g^{(2)}_t\big\}$ is open, by the very definition of transport, and non-empty, because $t_*\in\mathcal A$. Let us show that it is closed.
Denote by $\overline{\mathcal A}$ the closure of $\mathcal A$ in $[a,b]$, and let $t_0\in\overline{\mathcal A}$;
let $U$ be a sufficiently small connected open neighborhood of $\gamma(t_0)$.
There exist $K_1,K_2\in\mathcal F(U)$ such that, for $t$ sufficiently close to $t_0$:
\begin{equation}\label{eq:equalitygerms}
\germ_{\gamma(t)}K_1=\mathfrak g_t^{(1)}\quad\text{and}\quad\germ_{\gamma(t)}K_2=\mathfrak g_t^{(2)}.
\end{equation}
Since $t_0$ is in the closure of $\mathcal A$, we can find $t\in\mathcal A$ such that $\gamma(t)\in U$ and
for which \eqref{eq:equalitygerms} holds; in particular, the germs of $K_1$ and $K_2$ at such $t$ coincide.
It follows by the unique continuation property that $K_1=K_2$ in $U$, and therefore $\mathfrak g_{t_0}^{(1)}=\germ_{\gamma(t_0)}K_1=\germ_{\gamma(t_0)}K_2=\mathfrak g_{t_0}^{(2)}$, i.e.,
$t_0\in\mathcal A$ and $\mathcal A$ is closed.
\end{proof}
\subsection{Homotopy invariance}\label{sub:homotopyinv}
We will now show that the notion of transport of germs is invariant by fixed endpoints homotopies.
\begin{prop}\label{thm:homotopyinv}
Let $\mathcal F$ be a sheaf of local fields on a manifold $M$; assume that $\mathcal F$ satisfy the unique continuation property. Let $\gamma:[0,1]\times[a,b]\to M$ be a continuous map such that $\gamma(s,a)=p$ and $\gamma(s,b)=q$ for all $s\in[0,1]$, with $p,q\in M$ two fixed points. For all $s\in[0,1]$ set $\gamma_s:=\gamma(s,\cdot):[a,b]\to M$.
Fix $\mathfrak g_a\in\mathfrak G^\mathcal F_p$, and assume that for all $s\in[0,1]$ there exists a transport $\mathfrak g^{(s)}$ of $\mathcal F$-germs along $\gamma_s$, with $\mathfrak g^{(s)}_a=\mathfrak g_a$ for all $s$.
Then, $\mathfrak g^{(s)}_b\in\mathfrak G^\mathcal F_q$ does not depend on $s$.
\end{prop}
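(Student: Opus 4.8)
The plan is to show that the map $s\mapsto\mathfrak g^{(s)}_b$ is locally constant on $[0,1]$; since $[0,1]$ is connected, this immediately yields that it is constant, which is exactly the assertion. So I fix $s_0\in[0,1]$ and aim to produce a neighborhood of $s_0$ in $[0,1]$ on which $\mathfrak g^{(s)}_b=\mathfrak g^{(s_0)}_b$.

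First I would chop the interval $[a,b]$ using the transport $\mathfrak g^{(s_0)}$. By Definition~\ref{thm:deftransport}, every $t_*\in[a,b]$ has a neighborhood on which $\mathfrak g^{(s_0)}$ is represented by a single local field; since $[a,b]$ is compact, finitely many such neighborhoods cover it, and a Lebesgue-number argument produces a partition $a=\tau_0<\tau_1<\dots<\tau_N=b$ together with connected open sets $V_j\subset M$ and fields $K_j\in\mathcal F(V_j)$ such that $\gamma_{s_0}\big([\tau_j,\tau_{j+1}]\big)\subset V_j$ and $\mathfrak g^{(s_0)}_t=\germ_{\gamma(s_0,t)}(K_j)$ for $t\in[\tau_j,\tau_{j+1}]$. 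At each interior node $\tau_j$ the two fields $K_{j-1}$ and $K_j$ have the same germ at $\gamma(s_0,\tau_j)$, hence, by the unique continuation property, they coincide on some connected open neighborhood $O_j$ of $\gamma(s_0,\tau_j)$.

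Next I would transfer this data to nearby curves. Each $\gamma_{s_0}\big([\tau_j,\tau_{j+1}]\big)$ is a compact subset of the open set $V_j$, and $\gamma(s_0,\tau_j)\in O_j$; by continuity of $\gamma$ on the compact strip $[0,1]\times[a,b]$ (a tube-lemma argument), there is a single neighborhood $W$ of $s_0$ in $[0,1]$ such that, for all $s\in W$, one has $\gamma(s,t)\in V_j$ for every $t\in[\tau_j,\tau_{j+1}]$ and $\gamma(s,\tau_j)\in O_j$ at every interior node. For $s\in W$ I then define a candidate distribution along $\gamma_s$ by $\tilde{\mathfrak g}^{(s)}_t:=\germ_{\gamma(s,t)}(K_j)$ for $t\in[\tau_j,\tau_{j+1}]$. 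This is well defined at the nodes because $K_{j-1}=K_j$ on $O_j\ni\gamma(s,\tau_j)$, and it is a transport of $\mathcal F$-germs along $\gamma_s$ directly from Definition~\ref{thm:deftransport}, the field $K_j$ serving as the required local representative on each subinterval.

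Finally I would compare $\tilde{\mathfrak g}^{(s)}$ with the given transport $\mathfrak g^{(s)}$. Here the fixed-endpoint hypothesis enters decisively: since $\gamma(s,a)=p=\gamma(s_0,a)$, the initial germ of $\tilde{\mathfrak g}^{(s)}$ is $\germ_p(K_0)=\mathfrak g^{(s_0)}_a=\mathfrak g_a=\mathfrak g^{(s)}_a$, so $\tilde{\mathfrak g}^{(s)}$ and $\mathfrak g^{(s)}$ are two transports along $\gamma_s$ sharing the same value at $t=a$. Lemma~\ref{thm:uniquetransport} then forces $\tilde{\mathfrak g}^{(s)}=\mathfrak g^{(s)}$; evaluating at $t=b$ and using $\gamma(s,b)=q=\gamma(s_0,b)$ gives $\mathfrak g^{(s)}_b=\germ_q(K_{N-1})=\mathfrak g^{(s_0)}_b$ for all $s\in W$, which proves local constancy. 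I expect the only genuinely delicate point to be the uniform choice of the neighborhood $W$, i.e.\ guaranteeing by compactness that a single range of the parameter $s$ keeps all the segments $\gamma_s\big([\tau_j,\tau_{j+1}]\big)$ inside their respective $V_j$ and all nodes inside the $O_j$ simultaneously; once that uniformity is secured, the patching and the appeal to Lemma~\ref{thm:uniquetransport} are routine.
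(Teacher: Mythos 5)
Your proof is correct and follows essentially the same route as the paper's: cover $\gamma_{s_0}$ by finitely many open sets carrying local representatives of the transport, use compactness to keep nearby curves $\gamma_s$ inside the same sets, build a transport along $\gamma_s$ from the same fields, and conclude by uniqueness of transport with fixed initial germ. The only difference is that you spell out two points the paper leaves implicit --- the well-definedness at the partition nodes via the neighborhoods $O_j$, and the explicit appeal to Lemma~\ref{thm:uniquetransport} to identify the constructed transport with the given one --- which is a welcome level of care but not a different argument.
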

\begin{proof}
It suffices to show that $s\mapsto\mathfrak g^{(s)}_b$ is locally constant on $[0,1]$.
Fix $s\in[0,1]$ and
choose $N\in\mathds N$, $U_1,\ldots U_N$ connected open subsets of $M$, and $K_i\in\mathcal F(U_i)$,
 for $i=1,\ldots,N$, such that, setting
$t_i=a+i\frac{b-a}N$, $i=0,\ldots,N$:
\begin{itemize}
\item $\gamma_s\big([t_{i-1},t_{i}]\big)\subset U_i$;
\item $\germ_{\gamma(t)}(K_i)=\mathfrak g_t$ for all $t\in[t_{i-1},t_i]$,
\end{itemize}
for all $i=1,\ldots,N$. Then, given $s'$ sufficiently close to $s$,  $\gamma_{s'}\big([t_{i-1},t_{i}]\big)\subset U_i$ for all $i=1,\ldots,N$. This implies that the
local fields $K_i$ can be used to define a transport $\mathfrak g^{(s')}$ of $\mathfrak g_a$
along $\gamma_{s'}$. In particular, $\mathfrak g^{(s')}_b=K_N\big(\widetilde\gamma(b)\big)=
K_N\big(\gamma(b)\big)=\mathfrak g^{(s)}_b$. This concludes the proof.
\end{proof}
\subsection{A monodromy theorem}\label{sub:monodromy}
We are now ready to prove the aimed extension of the classical monodromy theorem in Complex Analysis for sheaves of local vector fields satisfying the unique continuation property.
\begin{prop}\label{thm:monodromyuniquecont}
Let $\mathcal F$ be  a sheaf of local fields on a manifold $M$; assume that $\mathcal F$ satisfies the unique continuation property. Let $p\in M$ be fixed, let $U$ be a simply connected open neighborhood of $p$, and let $\mathfrak g_p\in\mathfrak G^\mathcal F_p$ be an $\mathcal F$-germ at $p$ such that for all continuous curve $\gamma:[a,b]\to U$ with $\gamma(a)=p$, there exists a transport of $\mathfrak g_p$ along $\gamma$. Then, there exists (a necessarily unique) $K\in\mathcal F(U)$ such that $\germ_p(K)=\mathfrak g_p$.
\end{prop}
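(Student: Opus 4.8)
The plan is to reconstruct the classical monodromy argument in this sheaf-theoretic setting. First I would use the hypothesis to produce a candidate germ at every point of $U$: since $U$ is simply connected, hence path-connected, for each $q\in U$ I choose a continuous curve $\gamma_q:[a,b]\to U$ with $\gamma_q(a)=p$ and $\gamma_q(b)=q$, and by assumption there is a transport $\mathfrak g^{(q)}$ of $\mathfrak g_p$ along $\gamma_q$; I then set $\mathfrak g_q:=\mathfrak g^{(q)}_b\in\mathfrak G^{\mathcal F}_q$. This transport is unique by Lemma~\ref{thm:uniquetransport}, and for $q=p$ the constant curve gives back $\mathfrak g_p$.

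The first thing to verify is that $\mathfrak g_q$ does not depend on the chosen curve. Given two curves $\gamma_0,\gamma_1$ from $p$ to $q$ inside $U$, simple connectedness produces a fixed-endpoint homotopy $\gamma:[0,1]\times[a,b]\to U$ between them. The hypothesis guarantees a transport of $\mathfrak g_p$ along \emph{every} intermediate curve $\gamma_s$, since each $\gamma_s$ is a curve in $U$ issuing from $p$, and all these transports share the initial germ $\mathfrak g_p$; Proposition~\ref{thm:homotopyinv} then shows the endpoint germ is independent of $s$, so the two transports agree at $q$. Hence $q\mapsto\mathfrak g_q$ is a well-defined $\mathcal F$-germ at each point of $U$. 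This step, resting on the interplay between homotopy-invariance of transport and the topological hypothesis on $U$, is where the essential content of the theorem lies.

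Next I would upgrade this pointwise assignment to genuine local fields. Fix $q$ and a curve $\gamma_q$ as above, and apply Definition~\ref{thm:deftransport} at the endpoint $b$: there is a connected open neighborhood $W$ of $q$ and $K_W\in\mathcal F(W)$ with $\germ_{\gamma_q(t)}(K_W)=\mathfrak g^{(q)}_t$ for $t$ near $b$, so in particular $\germ_q(K_W)=\mathfrak g_q$. I claim that in fact $\germ_{q'}(K_W)=\mathfrak g_{q'}$ for every $q'\in W$. To see this, join $q$ to $q'$ by a short path $\sigma$ inside $W$ and consider the concatenation $\gamma_q\cdot\sigma$: along $\sigma$ the field $K_W$ itself defines a transport, which matches $\mathfrak g^{(q)}$ at the junction $q$, so $K_W$ and $\mathfrak g^{(q)}$ together constitute a transport of $\mathfrak g_p$ along $\gamma_q\cdot\sigma$ with endpoint germ $\germ_{q'}(K_W)$. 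Since $\gamma_q\cdot\sigma$ is another path in $U$ from $p$ to $q'$, the well-definedness just established forces $\germ_{q'}(K_W)=\mathfrak g_{q'}$. Thus each $q$ has a neighborhood $W$ on which a single local field $K_W$ represents the germ field.

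Finally I would glue. On an overlap $W\cap W'$ of two such neighborhoods the local fields $K_W$ and $K_{W'}$ have equal germ at every point, so the injectivity of the germ map~\eqref{eq:germmap}, a consequence of unique continuation, forces $K_W=K_{W'}$ on $W\cap W'$. The family $\{K_W\}$ therefore defines a single smooth vector field $K$ on $U=\bigcup_q W_q$ with $K|_{W_q}\in\mathcal F(W_q)$, and the sheaf gluing axiom yields $K\in\mathcal F(U)$; by construction $\germ_p(K)=\mathfrak g_p$. Uniqueness is immediate, since any two such extensions share the germ $\mathfrak g_p$ at $p$, hence agree on a neighborhood of $p$, and unique continuation on the connected set $U$ forces them to coincide. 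I expect the main obstacle to be the well-definedness step, and concretely the verification that Proposition~\ref{thm:homotopyinv} applies: one must ensure transports with the common initial condition $\mathfrak g_p$ exist along all intermediate curves of the homotopy, which is precisely what the hypothesis on $U$ delivers.
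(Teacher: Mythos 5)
Your argument is correct and follows essentially the same route as the paper's proof: define the germ at each $q\in U$ by transporting $\mathfrak g_p$ along an arbitrary path, use Proposition~\ref{thm:homotopyinv} together with simple connectedness for well-definedness, and then assemble the resulting germs into an element of $\mathcal F(U)$. The only difference is that you carefully spell out (via the concatenation argument and the sheaf gluing axiom) the final step that the paper dismisses with ``it is easy to see that $K\in\mathcal F(U)$, using the definition of transport,'' which is a welcome addition rather than a deviation.
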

\begin{proof}
Since $U$ is simply connected, any two continuous curves in $U$ between two given points are fixed endpoints homotopic. This allows one to define a vector field $K$ on $U$ as follows. Given $q\in U$, choose any continuous path $\gamma:[a,b]\to U$ with $\gamma(a)=p$ and $\gamma(b)=q$; let $\mathfrak g^\gamma$ be the unique transport of $\mathcal F$-germs along $\gamma$ such that $\mathfrak g^\gamma_a=\mathfrak g_p$, which exists by assumption. By Proposition~\ref{thm:homotopyinv}, $\mathfrak g_b^\gamma$ does not depend on $\gamma$. Then, we set $K_q:=\mathrm{ev}_q(\mathfrak g^\gamma_b)$, where $\mathrm{ev}$ is the evaluation map defined in \eqref{eq:evaluationmap}. It is easy to see that $K\in\mathcal F(U)$, using the definition of transport.
\end{proof}
\section{Admissible and regular sheaves}
We will now study a class of sheaves for which every germ can be transported along any curve. This class is defined in terms of two properties: rank boundedness and regularity.
\subsection{Sheaves with bounded rank}
\label{sub:admissible}
We say that $\mathcal F$ has \emph{bounded rank} if there exists
a positive integer $N^\mathcal F$ such that:
\[\mathrm{dim}\big(\mathcal F(U)\big)\le N^\mathcal F,\]
for all connected open subset $U\subset M$.
Let us call \emph{admissible} a sheaf which has both unique continuation and rank boundedness properties.
Examples of admissible sheaves will be discussed in Sections~\ref{sec:fieldsGstructureFinType}, \ref{sec:KillingFinsler}, \ref{sec:pseudoFinsler} and \ref{sec:sprays}.
\begin{rem}\label{rem:admissiblesubsheaf}
Observe that admissibility is inherited by sub-sheaves.
\end{rem}

\begin{lemma}\label{thm:exispecialneighborhoods}
Let $\mathcal F$ be an admissible sheaf of local fields on $M$. Then, given any $p\in M$, there exists
a connected open neighborhood $U$ of $p$ such that the map \eqref{eq:germmap}
is an isomorphism. In particular:
\[\mathrm{dim}(\mathfrak G^\mathcal F_p)=\mathrm{dim}\big(\mathcal F(U))\le N^\mathcal F.\]
\end{lemma}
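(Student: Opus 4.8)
The plan is to exploit the tension between the monotonicity of dimensions coming from unique continuation (inequality \eqref{eq:monotonicity}) and the uniform bound coming from bounded rank. Since the germ map \eqref{eq:germmap} is already known to be injective, the whole content of the statement is to produce a neighborhood on which it is also surjective. Accordingly, I would focus all the work on a single stabilization phenomenon for the function $V\mapsto\dim\big(\mathcal F(V)\big)$ on connected open neighborhoods of $p$.

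First I would consider the quantity $d:=\sup\big\{\dim\big(\mathcal F(V)\big):V\text{ a connected open neighborhood of }p\big\}$. By bounded rank this supremum is at most $N^\mathcal F$, and being a supremum of non-negative integers it is attained by some connected open neighborhood $U$ of $p$, with $\dim\big(\mathcal F(U)\big)=d$. The key step is then: for every connected open $V$ with $p\in V\subseteq U$, monotonicity gives $\dim\big(\mathcal F(V)\big)\ge d$, while maximality of $d$ gives $\dim\big(\mathcal F(V)\big)\le d$; hence $\dim\big(\mathcal F(V)\big)=d$. Because the restriction map $\mathcal F(U)\to\mathcal F(V)$ is injective (unique continuation) between spaces of equal finite dimension, it is in fact an isomorphism. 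Thus every $\mathcal F$-field defined on such a $V$ is the restriction of a (unique) field defined on all of $U$.

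With this $U$ fixed, surjectivity of the germ map becomes a short diagram chase. Given any germ $\mathfrak g\in\mathfrak G^\mathcal F_p$, I would choose a representative $K\in\mathcal F(W)$ on some connected open neighborhood $W$ of $p$, and let $V$ be the connected component of $W\cap U$ containing $p$. Then $V$ is a connected open neighborhood of $p$ inside $U$, so by the previous step there is $\widetilde K\in\mathcal F(U)$ with $\widetilde K|_V=K|_V$; since $K$ and $\widetilde K$ agree on the nonempty connected open set $V$, we have $\germ_p(\widetilde K)=\germ_p(K)=\mathfrak g$. Hence the germ map $\mathcal F(U)\to\mathfrak G^\mathcal F_p$ is surjective, and being also injective it is an isomorphism, which yields $\dim(\mathfrak G^\mathcal F_p)=\dim\big(\mathcal F(U)\big)\le N^\mathcal F$.

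I expect no serious obstacle here: the only points that need care are verifying that the supremum defining $d$ is genuinely attained (which uses integrality together with the bound $N^\mathcal F$) and that intersecting with $U$ and passing to a connected component keeps us within the range where the dimension has stabilized. The conceptual heart is simply that unique continuation forces dimensions to be weakly increasing as neighborhoods shrink, while bounded rank prevents this from continuing indefinitely, so the dimension must locally stabilize; on the stable range restriction maps become isomorphisms and every germ at $p$ is automatically representable by a field on the fixed neighborhood $U$.
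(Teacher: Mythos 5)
Your argument is correct and is essentially the paper's own proof: the paper likewise defines $\kappa_p^{\mathcal F}$ as the maximum of $\dim\big(\mathcal F(V)\big)$ over connected open neighborhoods of $p$ (attained thanks to bounded rank and integrality), takes $U$ realizing it, and deduces surjectivity of the germ map from the monotonicity property \eqref{eq:monotonicity}. You have merely written out in full the stabilization and diagram-chase details that the paper leaves to the reader.
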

\begin{proof}
Let us set:
\begin{equation}\label{eq:defkappaFp}
\kappa_p^\mathcal F:=\max\left\{\mathrm{dim}\big(\mathcal F(U)\big):U\ \text{connected open subset of $M$ containing $p$}\right\}.
\end{equation}
If $U$ is any connected neighborhood for which $\mathrm{dim}\big(\mathcal F(U)\big)=\kappa_p^\mathcal F$,
then the map \eqref{eq:germmap} is an isomorphism.
This follows easily from the  monotonicity property \eqref{eq:monotonicity}.
\end{proof}

From Lemma~\ref{thm:exispecialneighborhoods}, we have the following immediate semi-continuity result for the map $\kappa^\mathcal F$:
\begin{prop}\label{thm:lowerSCkappa}
Let $\mathcal F$ be an admissible sheaf of local fields on $M$. The map $M\ni p\mapsto\kappa_p^\mathcal F\in\mathds N$ is lower semi-continuous.
\end{prop}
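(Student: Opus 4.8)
The plan is to reduce the statement, for an $\N$-valued function, to the claim that around each point $p$ there is a neighborhood on which $\kappa^{\mathcal F}$ does not drop below $\kappa_p^{\mathcal F}$. Indeed, since $\kappa^{\mathcal F}$ takes values in $\N$, lower semi-continuity at $p$ is equivalent to the existence of an open set $W\ni p$ with $\kappa_q^{\mathcal F}\ge\kappa_p^{\mathcal F}$ for every $q\in W$: on such a $W$, any real number $c<\kappa_p^{\mathcal F}$ then satisfies $c<\kappa_q^{\mathcal F}$ as well.

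The key step is to exhibit the witness neighborhood $W$. Fix $p\in M$. By the definition \eqref{eq:defkappaFp} of $\kappa_p^{\mathcal F}$ --- where the maximum is attained precisely because $\mathcal F$ has bounded rank --- there is a connected open neighborhood $U$ of $p$ with $\mathrm{dim}\big(\mathcal F(U)\big)=\kappa_p^{\mathcal F}$; this is exactly the neighborhood furnished by Lemma~\ref{thm:exispecialneighborhoods}. I would then simply set $W:=U$.

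It remains to check that $\kappa_q^{\mathcal F}\ge\kappa_p^{\mathcal F}$ for every $q\in U$. The point is that $U$ is simultaneously a connected open neighborhood of \emph{every} point $q\in U$. Hence $U$ is an admissible competitor in the maximum \eqref{eq:defkappaFp} defining $\kappa_q^{\mathcal F}$, and therefore $\kappa_q^{\mathcal F}\ge\mathrm{dim}\big(\mathcal F(U)\big)=\kappa_p^{\mathcal F}$, as desired.

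There is no serious obstacle here: the entire content is the observation that the optimal neighborhood for $p$ serves as a common lower-bound witness for all of its own points. The only subtlety worth flagging is that the argument yields lower, and not upper, semi-continuity: by the monotonicity \eqref{eq:monotonicity}, shrinking the domain can only increase $\mathrm{dim}\big(\mathcal F(\cdot)\big)$, so a nearby point $q$ may genuinely have a larger germ space, which is precisely why no matching upper bound can be expected.
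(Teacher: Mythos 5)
Your proof is correct and follows exactly the same route as the paper: take the $\mathcal F$-special neighborhood $U$ of $p$ from Lemma~\ref{thm:exispecialneighborhoods}, note that $U$ is a connected open neighborhood of each of its points $q$, and conclude $\kappa_q^\mathcal F\ge\mathrm{dim}\big(\mathcal F(U)\big)=\kappa_p^\mathcal F$. Your added remarks on why this yields lower (and not upper) semi-continuity are accurate but not needed beyond the paper's one-line argument.
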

\begin{proof}
Given any $p\in M$, let $\mathcal U$ be a connected open neighborhood of $p$ as in Lemma~\ref{thm:exispecialneighborhoods}. Then,
$\kappa_p^\mathcal F=\mathrm{dim}\big(\mathcal F(U)\big)\le\kappa_q^\mathcal F$ for all $q\in\mathcal U$.
\end{proof}

We will assume henceforth that $\mathcal F$ is an admissible sheaf of local fields on the manifold $M$.

\subsection{$\mathcal F$-special neighborhoods}
\label{sub:Fspecial}
A connected open neighborhood $U$ of $p$ as in Lemma~\ref{thm:exispecialneighborhoods} will be
said to be \emph{$\mathcal F$-special} for $p$. If $U$ is $\mathcal F$-special for $p$, then given
any $\mathfrak g_p\in\mathfrak G_p^\mathcal F$ there exists a unique $K\in\mathcal F(U)$ such that
$\germ_p(K)=\mathfrak g_p$. Observe that the number $\kappa^\mathcal F_p$ defined in \eqref{eq:defkappaFp}
is also given by:
\[\kappa_p^\mathcal F=\mathrm{dim}(\mathfrak G^\mathcal F_p).\]

\begin{lemma}\label{thm:special}
Let $\mathcal F$ be an admissible sheaf of local fields on $M$.
The following statements hold:
\begin{enumerate}[(i)]
\item\label{itm:1} If $U$ is $\mathcal F$-special for $p$, and $U'\subset U$ is a connected open subset containing $p$, then also $U'$ is
$\mathcal F$-special for $p$. In particular, every point admits \emph{arbitrarily small} $\mathcal F$-special connected open neighborhoods.
\smallskip

\item\label{itm:2} Assume that $p\in U$ and that $\kappa^\mathcal F_q$ is constant for $q\in U$. Then, there exists $U'\subset U$
containing $p$ which
is $\mathcal F$-special for all its points.
\end{enumerate}
\end{lemma}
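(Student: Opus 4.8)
The plan is to reduce everything to the numerical characterization of $\mathcal F$-special neighborhoods that is implicit in Lemma~\ref{thm:exispecialneighborhoods}: a connected open neighborhood $W$ of a point $x$ is $\mathcal F$-special for $x$ if and only if $\dim(\mathcal F(W))=\kappa_x^\mathcal F$, the right-hand side being the maximal value of $\dim(\mathcal F(\cdot))$ over connected open neighborhoods of $x$ and, at the same time, equal to $\dim(\mathfrak G_x^\mathcal F)$. Indeed, the germ map \eqref{eq:germmap} is always injective by unique continuation, so it is an isomorphism exactly when the dimensions of its domain and target coincide. With this reformulation, both statements become dimension counts.

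For part~\eqref{itm:1}, I would argue as follows. Since $U'\subset U$ are connected and $p\in U'$, the monotonicity property \eqref{eq:monotonicity} gives $\dim(\mathcal F(U'))\ge\dim(\mathcal F(U))$; on the other hand $U'$ is itself a connected open neighborhood of $p$, so by the very definition of $\kappa_p^\mathcal F$ as a maximum we have $\dim(\mathcal F(U'))\le\kappa_p^\mathcal F$. As $U$ is $\mathcal F$-special for $p$, we know $\dim(\mathcal F(U))=\kappa_p^\mathcal F$, and the two inequalities sandwich $\dim(\mathcal F(U'))=\kappa_p^\mathcal F$, i.e.\ $U'$ is $\mathcal F$-special for $p$. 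The ``in particular'' clause then follows by combining the existence of at least one $\mathcal F$-special neighborhood of $p$ (Lemma~\ref{thm:exispecialneighborhoods}) with the freedom to shrink it to an arbitrarily small connected open neighborhood of $p$, say a coordinate ball, which remains $\mathcal F$-special by what was just proved.

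For part~\eqref{itm:2}, the strategy is to produce a single neighborhood whose dimension matches the constant value of $\kappa^\mathcal F$ throughout $U$. Using part~\eqref{itm:1}, I would choose a connected open set $U'\subset U$ with $p\in U'$ that is $\mathcal F$-special for $p$, so that $\dim(\mathcal F(U'))=\kappa_p^\mathcal F$. Now let $q\in U'$ be arbitrary; since $q\in U$ and $\kappa^\mathcal F$ is constant on $U$, we have $\kappa_q^\mathcal F=\kappa_p^\mathcal F=\dim(\mathcal F(U'))$. As $U'$ is a connected open neighborhood of $q$ and the germ map at $q$ is injective, the equality of dimensions forces it to be an isomorphism, i.e.\ $U'$ is $\mathcal F$-special for $q$. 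Since $q\in U'$ was arbitrary, $U'$ is $\mathcal F$-special for all of its points.

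The step carrying the real content is the use of the constancy hypothesis in part~\eqref{itm:2}: it is precisely what neutralizes the obstruction coming from the lower semicontinuity of $\kappa^\mathcal F$ (Proposition~\ref{thm:lowerSCkappa}). In general $\kappa_q^\mathcal F$ may \emph{strictly} exceed $\kappa_p^\mathcal F$ for $q$ near $p$, and at such a point the injective germ map out of $\mathcal F(U')$ would fail to be surjective, so $U'$ could not be $\mathcal F$-special for $q$. The constancy assumption rules this out, and it is the only place where a genuine hypothesis, beyond admissibility and the formal properties already established, is required.
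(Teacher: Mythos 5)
Your argument is correct and follows the paper's own proof essentially verbatim: part~\eqref{itm:1} is the same sandwich between the monotonicity inequality \eqref{eq:monotonicity} and the maximality defining $\kappa_p^\mathcal F$, and part~\eqref{itm:2} is the same observation that for an $\mathcal F$-special neighborhood $U'$ of $p$ inside $U$ one has $\dim(\mathcal F(U'))=\kappa_p^\mathcal F=\kappa_q^\mathcal F$ for every $q\in U'$, which upgrades the injective germ map at $q$ to an isomorphism. Nothing to correct.
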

\begin{proof}
For the proof of \eqref{itm:1}, note that if $p\in U'\subset U$, with $U'$ connected, then:
\[\kappa_p^\mathcal F\ge\dim\big(\mathcal F(U')\big)\stackrel{\text{by \eqref{eq:monotonicity}}}\ge\dim\big(\mathcal F(U)\big)=\kappa_p^\mathcal F,\]
i.e., $\kappa_p^\mathcal F=\dim\big(\mathcal F(U')\big)$, and $U'$ is $\mathcal F$-special for $p$.

As to the proof of \eqref{itm:2}, let $U'\subset U$ be any $\mathcal F$-special open connected neighborhood of $p$. Then, for all $q\in U'$:
\[\kappa_q^\mathcal F=\kappa_p^\mathcal F=\mathrm{dim}\big(\mathcal F(U')\big),\]
i.e., $U'$ is $\mathcal F$-special for $q$.
\end{proof}

The interest in open sets which are $\mathcal F$-special for all their points is the following immediate
consequence of the definition:
\begin{prop}\label{thm:immediate}
If $U$ is $\mathcal F$-special for all its points, then given any $p\in U$ and any $\mathfrak g_*\in \mathfrak G^\mathcal F_p$, there exists a unique
$K\in\mathcal F(U)$ such that $\germ_p(K)=\mathfrak g_*$.  Moreover, if $V\subset U$ is any connected open subset,
then for all $K\in\mathcal F(V)$ there exists a unique $\widetilde K\in\mathcal F(U)$ such that $\widetilde K\vert_V=K$.\qed
\end{prop}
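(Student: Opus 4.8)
The plan is to reduce both assertions to a single fact supplied by the hypothesis, namely that since $U$ is $\mathcal F$-special for \emph{every} point $q\in U$, the germ map \eqref{eq:germmap} is an isomorphism $\mathcal F(U)\xrightarrow{\ \sim\ }\mathfrak G^\mathcal F_q$ at each such $q$, together with the unique continuation property. Almost everything then follows by bookkeeping, which is why the statement is labelled an immediate consequence of the definitions.

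For the first assertion I would argue directly: since $U$ is $\mathcal F$-special for $p$, the linear map $\mathcal F(U)\ni K\mapsto\germ_p(K)\in\mathfrak G^\mathcal F_p$ is by definition (Lemma~\ref{thm:exispecialneighborhoods}) an isomorphism. Hence for the prescribed germ $\mathfrak g_*\in\mathfrak G^\mathcal F_p$ there is exactly one $K\in\mathcal F(U)$ with $\germ_p(K)=\mathfrak g_*$, yielding existence and uniqueness simultaneously.

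For the second assertion I would first fix an arbitrary point $p\in V$ and set $\mathfrak g_*:=\germ_p(K)\in\mathfrak G^\mathcal F_p$. Applying the first part produces a unique $\widetilde K\in\mathcal F(U)$ with $\germ_p(\widetilde K)=\mathfrak g_*=\germ_p(K)$. To see that $\widetilde K$ restricts to $K$ on $V$, note that both $\widetilde K\big\vert_V$ and $K$ lie in $\mathcal F(V)$ and share the same germ at $p$; since $V$ is connected and contains $p$, the germ map $\mathcal F(V)\to\mathfrak G^\mathcal F_p$ is injective (this is exactly the injectivity of \eqref{eq:germmap}, itself a consequence of the unique continuation property), and therefore $\widetilde K\big\vert_V=K$.

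Finally, uniqueness of the extension $\widetilde K$ over $U$ follows from unique continuation: if $\widetilde K_1,\widetilde K_2\in\mathcal F(U)$ both restrict to $K$ on $V$, then $\widetilde K_1-\widetilde K_2\in\mathcal F(U)$ vanishes on the non-empty open subset $V$ of the connected set $U$, hence vanishes identically. I do not expect any genuine obstacle in carrying this out; the only point requiring a little care is to invoke connectedness of $V$ (for the injectivity of the germ map) and of $U$ (for unique continuation) at precisely the right moments, so that the cited properties apply verbatim.
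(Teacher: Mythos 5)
Your proposal is correct and is precisely the bookkeeping the paper intends: the statement carries a \qed{} with no written proof because it is meant to follow immediately from the definition of $\mathcal F$-special neighborhoods (the germ map being an isomorphism at each point of $U$) together with the unique continuation property, which is exactly how you argue. No gaps.
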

\begin{defi}\label{thm:defregular}
A connected open subset $U$ of $M$ on which $\kappa^\mathcal F_p$ is constant
will be called \emph{$\mathcal F$-regular}. An admissible sheaf $\mathcal F$ will be called \emph{regular}\footnote{%
In \cite[Definition~1.3]{SinSte65}, a sheaf $\mathcal F$ is defined to be \emph{regular} when the map $\mathrm{ev}_p$ defined in \eqref{eq:evaluationmap} has constant rank for $p\in M$.
} if $\kappa^\mathcal F_p$ is constant for all $p\in M$.
\end{defi}
If $U$ is a connected open set which is $\mathcal F$-special for all its points, then $U$ is $\mathcal F$-regular. We will now determine under which circumstances the
converse of this statement holds.
\subsection{Existence of transport}
As to the existence of a transport of $\mathcal F$-germs with given initial conditions, one has
to assume regularity for the sheaf $\mathcal F$.

\begin{prop}
\label{thm:existencetransport}
Let $\gamma:[a,b]\to M$ be a continuous path, and assume that $\kappa^\mathcal F$ is constant along $\gamma$. Then, given any $\mathfrak g_*\in\mathfrak G^\mathcal F_{\gamma(a)}$
there exists a unique transport $[a,b]\ni t\mapsto \mathfrak g_t\in\mathfrak G^\mathcal F_{\gamma(t)}$
of $\mathcal F$-germs along $\gamma$ such that
\begin{equation}\label{eq:InCondTransport}
\mathfrak g_a=\mathfrak g_*.
\end{equation}
\end{prop}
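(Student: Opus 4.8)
Uniqueness is already in hand: by Lemma~\ref{thm:uniquetransport}, any two transports along $\gamma$ agreeing at a single instant coincide, so at most one transport satisfies \eqref{eq:InCondTransport}. The whole task is therefore existence, and the plan is to build the transport by concatenating finitely many local representatives, exactly as in the proof of Proposition~\ref{thm:homotopyinv}; the only genuinely new ingredient is that the hypothesis of constant $\kappa^\mathcal F$ along $\gamma$ lets us find, around each point of the curve, a neighborhood that is $\mathcal F$-special simultaneously for \emph{all} the curve-points it contains.

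The first and crucial step is this upgrade. Fix $t_0$ and, using Lemma~\ref{thm:exispecialneighborhoods}, choose a connected open $U$ that is $\mathcal F$-special for $\gamma(t_0)$, so that $\dim\big(\mathcal F(U)\big)=\kappa^\mathcal F_{\gamma(t_0)}$. I claim $U$ is then $\mathcal F$-special for every $\gamma(t)\in U$. Indeed, for such $t$ the germ map \eqref{eq:germmap}, $\mathcal F(U)\to\mathfrak G^\mathcal F_{\gamma(t)}$, is injective by the unique continuation property, whence $\dim\big(\mathcal F(U)\big)\le\kappa^\mathcal F_{\gamma(t)}$; but $\kappa^\mathcal F_{\gamma(t)}=\kappa^\mathcal F_{\gamma(t_0)}=\dim\big(\mathcal F(U)\big)$ by the assumed constancy of $\kappa^\mathcal F$ along $\gamma$, so the injection is an isomorphism between equal finite-dimensional spaces. (This is the curve-analogue of Lemma~\ref{thm:special}, which cannot be quoted verbatim here because we only control $\kappa^\mathcal F$ along $\gamma$, not on an open set.) In particular, for any $\gamma(t)\in U$, every $\mathcal F$-germ at $\gamma(t)$ is represented by a unique field in $\mathcal F(U)$.

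Next I would set up the partition. For each $t$ pick such a $U$, and by continuity of $\gamma$ an interval around $t$ whose $\gamma$-image lies in it; compactness of $[a,b]$ then yields a subdivision $a=t_0<t_1<\cdots<t_N=b$ and connected open sets $U_1,\dots,U_N$, each $\mathcal F$-special for all its curve-points, with $\gamma\big([t_{i-1},t_i]\big)\subset U_i$. Now propagate the germ inductively: given $\mathfrak g_{t_{i-1}}\in\mathfrak G^\mathcal F_{\gamma(t_{i-1})}$ (with $\mathfrak g_{t_0}=\mathfrak g_*$), the previous step provides a unique $K_i\in\mathcal F(U_i)$ with $\germ_{\gamma(t_{i-1})}(K_i)=\mathfrak g_{t_{i-1}}$, and I set $\mathfrak g_t:=\germ_{\gamma(t)}(K_i)$ for $t\in[t_{i-1},t_i]$. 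The definitions overlap consistently at the nodes by construction, so this produces a single map $t\mapsto\mathfrak g_t$ on all of $[a,b]$ satisfying \eqref{eq:InCondTransport}.

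Finally I would check that $\mathfrak g$ is a transport in the sense of Definition~\ref{thm:deftransport}. At any $t_*$ interior to some $[t_{i-1},t_i]$ the single field $K_i$ witnesses the condition. The only point requiring care — and the place I expect the main friction — is at a node $t_*=t_i$, where a priori two different fields $K_i,K_{i+1}$ govern the two sides. But $\germ_{\gamma(t_i)}(K_i)=\mathfrak g_{t_i}=\germ_{\gamma(t_i)}(K_{i+1})$, so $K_i$ and $K_{i+1}$ share the same germ at $\gamma(t_i)$; by the unique continuation property they agree on the connected component of $U_i\cap U_{i+1}$ containing $\gamma(t_i)$, and this common restriction serves as the single local field required near $t_i$. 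This establishes existence and completes the proof. I should emphasize that, apart from the dimension-counting upgrade of the first step, the argument is purely topological; an alternative packaging via a connectedness (open–closed / supremum) argument on the maximal subinterval of $[a,b]$ over which the transport extends would work equally well.
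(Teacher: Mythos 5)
Your proof is correct. The crucial step---that a connected neighborhood $U$ which is $\mathcal F$-special for $\gamma(t_0)$ is automatically $\mathcal F$-special for every curve point $\gamma(t)\in U$, because the injective germ map \eqref{eq:germmap} lands in a space of the same finite dimension by the assumed constancy of $\kappa^{\mathcal F}$ along $\gamma$---is exactly the fact the paper relies on, though the paper asserts it in a single sentence inside its closedness argument while you spell out the dimension count. Where you genuinely diverge is in the globalization: the paper runs an open--closed connectedness argument on the set $\mathcal A$ of parameters $t$ up to which a transport of $\mathfrak g_*$ exists along $\gamma\vert_{[a,t]}$, reducing closedness to Lemma~\ref{thm:uniquetransport}, whereas you cover $\gamma\big([a,b]\big)$ by finitely many such neighborhoods via compactness and concatenate local representatives across a subdivision, in the style of the proof of Proposition~\ref{thm:homotopyinv}. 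The two packagings are interchangeable and use the same ingredients; yours has the merit of making explicit the verification at the subdivision nodes (where unique continuation forces $K_i$ and $K_{i+1}$ to agree on the connected component of $U_i\cap U_{i+1}$ containing $\gamma(t_i)$, producing the single local field required by Definition~\ref{thm:deftransport}), a point the paper's formulation absorbs into the uniqueness lemma. Neither argument is more general than the other.
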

\begin{proof}
Define:
\[\mathcal A=\Big\{t\in\left(a,b\right]:\exists\ \text{transport
of $\mathcal F$-germs  $\mathfrak g$ along $\gamma\vert_{[a,t]}$ satisfying \eqref{eq:InCondTransport}}\Big\}.\]
Let us show that $\mathcal A=\left(a,b\right]$. By Lemma~\ref{thm:exispecialneighborhoods}, there exists an open neighborhood $U_a$ of $\gamma(a)$ which is $\mathcal F$-special for $\gamma(a)$.
Then, there exists $K\in\mathcal F(U_a)$ such that $\germ_{\gamma(a)}K=\mathfrak g_*$.
This says that $\mathcal A$ contains an interval of the form $\left(a,\varepsilon\right]$, for some $\varepsilon>0$.

Arguing as in Lemma~\ref{thm:uniquetransport}, we can show that $\mathcal A$ is both open and closed in $\left(a,b\right]$. Again, openness follows readily from the very definition of transport.

In order to show that $\mathcal A$ is closed, assume that $t_0\in\overline{\mathcal A}$ (the closure of $\mathcal A$), and let
$U$ be an open neighborhood of $\gamma(t_0)$ which is $\mathcal F$-special for $\gamma(t_0)$.
Choose $t_1\in\mathcal A$ such that $\gamma(t_1)\in U$, then applying the uniqueness of Lemma \ref{thm:uniquetransport}, we get that there exists $K\in\mathcal F(U)$ such that
$\germ_{\gamma(t)}(K)=\mathfrak g_t$ for all $t$ sufficiently close to $t_1$.  This is because $U$ is also $\mathcal F$-special for all these points $\gamma(t)$ due to the assumption that $\kappa^\mathcal F$ is constant along $\gamma$.
Then, $\germ_{\gamma(t)}( K)=\mathfrak g_t$ for all $t$ sufficiently close to $t_0$  (more generally for all $t$ such that $\gamma(t)\in U$),  hence
$t_0\in\mathcal A$ and $\mathcal A$ is closed.
Thus, $\mathcal A=\left(a,b\right]$, and the existence of transport is proved.

Uniqueness follows directly from Lemma~\ref{thm:uniquetransport}.
\end{proof}

As a corollary, we can now state the main result of this section.

\begin{thm}\label{thm:globalregular}
Let $M$ be a (connected and) simply connected differentiable manifold, and let
$\mathcal F$ be an admissible and regular sheaf of local fields on $M$. Given any connected open subset $U\subset M$ and any $\widetilde K\in\mathcal F(U)$, there exists a unique $K\in\mathcal F(M)$ such that $K\big\vert_U=\widetilde K$. Similarly, given any $p\in M$ and any
$\mathfrak g_p\in\mathfrak G_p^\mathcal F$, then there exists a unique $K\in\mathcal F(M)$ such that
$\germ_p(K)=\mathfrak g_p$.
\end{thm}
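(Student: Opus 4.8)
The plan is to combine the monodromy theorem (Proposition~\ref{thm:monodromyuniquecont}) with the existence-of-transport result (Proposition~\ref{thm:existencetransport}) applied to the whole manifold. The key observation is that, since $\mathcal F$ is assumed \emph{regular}, the function $\kappa^\mathcal F$ is constant on all of $M$, hence constant along \emph{every} continuous path in $M$. First I would verify that this places us squarely in the hypotheses of Proposition~\ref{thm:existencetransport}: for any continuous path $\gamma:[a,b]\to M$, and any initial germ $\mathfrak g_*\in\mathfrak G^\mathcal F_{\gamma(a)}$, there exists a unique transport of $\mathcal F$-germs along $\gamma$ starting at $\mathfrak g_*$. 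In other words, global regularity upgrades the conditional transport existence to an \emph{unconditional} one.

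With transport available along every path, I would then apply the monodromy theorem with $U=M$ itself. Since $M$ is assumed simply connected, it serves as its own simply connected open neighborhood of any chosen basepoint $p$. Fixing $p\in M$ and an arbitrary germ $\mathfrak g_p\in\mathfrak G_p^\mathcal F$, the transport-existence step guarantees exactly the hypothesis of Proposition~\ref{thm:monodromyuniquecont}: every continuous curve $\gamma:[a,b]\to M$ with $\gamma(a)=p$ admits a transport of $\mathfrak g_p$. The monodromy theorem then yields a (necessarily unique) $K\in\mathcal F(M)$ with $\germ_p(K)=\mathfrak g_p$. This disposes of the second assertion of the theorem directly.

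For the first assertion, I would reduce it to the second. Given a connected open $U\subset M$ and $\widetilde K\in\mathcal F(U)$, I would pick any $p\in U$ and set $\mathfrak g_p:=\germ_p(\widetilde K)$. Applying the second part produces a unique $K\in\mathcal F(M)$ with $\germ_p(K)=\mathfrak g_p=\germ_p(\widetilde K)$. Since $K\vert_U$ and $\widetilde K$ are both elements of $\mathcal F(U)$ sharing the same germ at $p$, the injectivity of the germ map \eqref{eq:germmap}—which is a consequence of the unique continuation property—forces $K\vert_U=\widetilde K$ on the connected set $U$. Uniqueness of the global extension $K$ follows again from unique continuation: any two global fields agreeing on the nonempty open set $U$ must coincide on all of $M$.

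The only genuinely substantive point, and the one I would guard against glossing over, is the transition from ``$\kappa^\mathcal F$ constant on $M$'' to ``$\kappa^\mathcal F$ constant along each path $\gamma$.'' This is immediate—the constancy on $M$ is precisely the definition of regularity in Definition~\ref{thm:defregular}, and it trivially restricts to any subset, in particular to the image of $\gamma$—but it is the hinge on which the whole argument turns, because Proposition~\ref{thm:existencetransport} requires constancy only along the path rather than globally. Everything else is a clean concatenation of the two main propositions plus two applications of unique continuation. I therefore expect no real obstacle beyond correctly invoking simple connectivity of $M$ to license taking $U=M$ in the monodromy theorem.
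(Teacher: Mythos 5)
Your proposal is correct and follows exactly the paper's own argument: regularity makes $\kappa^{\mathcal F}$ constant along every path, so Proposition~\ref{thm:existencetransport} supplies transport of any germ along any curve, and the monodromy theorem (Proposition~\ref{thm:monodromyuniquecont}) applied with $U=M$ gives the global extension. Your explicit reduction of the first assertion to the second via unique continuation is just a spelled-out version of what the paper dismisses as following "easily."
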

\begin{proof}
The first statement follows easily from the second, which is proved as follows.
Given any $p\in M$, any germ $\mathfrak g_p\in\mathfrak G_p^\mathcal F$, and any continuous curve $\gamma$ in $M$ starting at $p$, by regularity (Proposition~\ref{thm:existencetransport}) there exists a transport of $\mathfrak g_p$ along $\gamma$. The existence of the desired field $K\in\mathcal F(M)$ follows from the monodromy theorem, Proposition~\ref{thm:monodromyuniquecont}.
\end{proof}
Theorem~\ref{thm:globalregular} applies in particular to the sheaf of fields whose flow preserves a real-analytic $G$-structure of finite type
(Proposition~\ref{thm:Gstructsheafboundedrank}, Corollary~\ref{thm:fincontpropfinorderGstruct}, Theorem~\ref{thm:FPanalyticregular}), giving a generalization of \cite[(0.2) Theorem]{amores79}.
\section{Sheaves of fields preserving a $G$-structure of finite type}
\label{sec:fieldsGstructureFinType}

\subsection{Infinitesimal symmetries of a $G$-structure}
\label{sub:Gstructuresinfsymmetries}
Let $G$ be a Lie subgroup of $\mathrm{GL}(n)$, denote by $\mathfrak g$ its Lie algebra, and let $M^n$ be a manifold with a $G$-structure $\mathcal P$. Recall that this is a $G$-principal sub-bundle of the frame bundle\footnote{By a \emph{frame} of an $N$-dimensional vector space $V$, we mean an isomorphism $q:\mathds R^N\to V$.}
$\mathrm{FR}(TM)$ of $TM$.
\subsubsection{$G$-fields}
It is well known that there exists a connection $\nabla$ compatible with the $G$-structure,\footnote{%
Such compatible connection is not symmetric in general. For instance, if $G$ is the complex general linear subgroup
$\mathrm{GL}(n,\mathds C)$, so that the corresponding $G$-structure is an almost complex structure, then there exists a symmetric compatible connection if and only if the structure is in fact complex.} i.e., such that the parallel transport of frames of $\mathcal P$ belong to $\mathcal P$.
A (local) vector field $K$ on $M$ is said to be an \emph{infinitesimal symmetry} of the $G$-structure $\mathcal P$, or a \emph{$G$-field},
if its flow preserves the $G$-structure. One can characterize $G$-fields using a compatible connection; to this aim, we need to recall a few facts about the lifting of vector fields to the frame bundle.
\subsubsection{Lifting $G$-fields to the frame bundle}\label{sub:lifting}
Let us recall a notion of lifting of (local) vector fields defined on a manifold $M$ to (local) vector fields
in the frame bundle $\mathrm{FR}(TM)$. First, diffeomorphisms of $M$ (or between open subsets of $M$) can be naturally  lifted to diffeomorphisms\footnote{Given a diffeomorphism $f$, the lifted map $\widetilde f$ is defined by $\widetilde f(p)=\mathrm df(x)\circ p$, where $p:\mathds R^n\to T_xM$ is a frame.} of $\mathrm{FR}(TM)$ (or between the corresponding open subsets of $\mathrm{FR}(TM)$); this notion of lifting preserves the composition of diffeomorphisms. Thus, given a vector field $X$ in $M$, the flow of $X$ can be lifted to a flow in $\mathrm{FR}(M)$; this is the flow of a vector field $\widetilde X$ in $\mathrm{FR}(TM)$.
An explicit formula for $\widetilde X$ can be written using a connection $\nabla$ on $M$; for this general computation,
we don't require that $\nabla$ is compatible with $\mathcal P$. Observe that $\nabla$ need not be symmetric either, and we will denote by $T$ its torsion.

If $F_t$ denotes the flow of $X$ and $\widetilde F_t$ its lifting, for fixed $x\in M$ and $p\in\mathrm{FR}(TM)$
 with $\pi(p)=x$, we have:
\[\widetilde X(p)=\tfrac{\mathrm d}{\mathrm dt}\big\vert_{t=0}\widetilde F_t(p)= \tfrac{\mathrm d}{\mathrm dt}\big\vert_{t=0}[\mathrm dF_t(x)\circ p]\]
for all $p\in\mathrm{FR}(TM)$ and all $x\in M$. Observe that $\widetilde F_t(p)=\mathrm dF_t(x)\circ p$ is a frame at the point $F_t(x)$.

The horizontal component of $\widetilde X(p)$ is:
\begin{multline}\label{eq:Xtilhor}
[\widetilde X(p)]_{\mathrm{hor}} =\mathrm d\pi_p\big(\tilde X(p)\big) = \tfrac{\mathrm d}{\mathrm dt}\big\vert_{t=0}\big[\pi(\mathrm dF_t(x)\circ p)\big]\\ = \tfrac{\mathrm d}{\mathrm dt}\big\vert_{t=0}[F_t(x)]|= X(x),
\end{multline}
i.e., $\widetilde X$ is indeed a lifting of $X$.
The vertical component of $\widetilde X(p)$ is:
\begin{equation}\label{eq:Xtilver}
[\widetilde X(p)]_{\mathrm{vert}} = \tfrac{\mathrm D}{\mathrm dt}\big\vert_{t=0}[\mathrm dF_t(x)\circ p]= \tfrac{\mathrm D}{\mathrm dt}\big\vert_{t=0}[\mathrm dF_t(x)] \circ p;
\end{equation}
for this equality, we have used the fact that  $\frac{\mathrm D}{\mathrm dt}$ commutes with right-com\-position with $p$.
We have
\[\tfrac{\mathrm D}{\mathrm dt}\big\vert_{t=0}[dF_t(x)v]= \tfrac{\mathrm D}{\mathrm dt}\big\vert_{t=0}\,\tfrac{\mathrm d}{\mathrm ds}\big\vert_{s=0}\big[F_t\big(x(s)\big)\big],\]
where $(-\varepsilon,\varepsilon)\ni s\mapsto x(s)\in M$ for $\varepsilon>0$, is a smooth curve such that $x(0)=x$ and $\dot x(0)=v$. Then:
\begin{multline*}
\tfrac{\mathrm D}{\mathrm dt}\big\vert_{t=0}\,\tfrac{\mathrm d}{\mathrm ds}\big\vert_{s=0}\big[F_t\big(x(s)\big)\big]\\ =\tfrac{\mathrm D}{\mathrm ds}\big\vert_{s=0}\,\tfrac{\mathrm d}{\mathrm dt}\big\vert_{t=0}\big[F_t\big(x(s)\big)\big] + T\left(\tfrac{\mathrm d}{\mathrm dt}\big[F_t\big(x(s)\big)\big],\tfrac{\mathrm d}{\mathrm ds}\big[F_t\big(x(s)\big)\big]\right)\big\vert_{s=t=0}
\\ = \nabla_v X(x) + T\big(X(x),v\big).
 \end{multline*}
Therefore, the vertical component of $\widetilde X(p)$ is:
\begin{equation}\label{eq:vercomptildeX}
[\widetilde{X}(p)]_{\mathrm{vert}} = \nabla X(x)\circ p + T\big(X(x),p\,\cdot\big).
\end{equation}
Using formulas \eqref{eq:Xtilhor} and \eqref{eq:Xtilver}, we can write:
\begin{equation}\label{eq:impobs}
\widetilde X=L(X\circ\pi),
\end{equation}
where $\pi:\mathrm{FR}(TM)\to M$ is the canonical projection and $L$ is a linear first order differential operator from sections of $\pi^*(TM)$ to sections of $T\big(\mathrm{FR}(TM)\big)$.
\subsubsection{Characterization of $G$-fields}\label{sub:charGfields}
For all $x\in M$, let $\mathfrak g_x$ be the Lie subalgebra of $\mathrm{gl}(T_xM)$ that corresponds to the Lie group
$G_x$ of automorphisms of the $G$-structure of the tangent space $T_xM$. Clearly, $\mathfrak g_x$ is isomorphic to $\mathfrak g$ for all $x$. Using the lifting of vector fields to the frame bundle, it is not hard to prove the following:
\begin{prop}\label{thm:charGfields}
Let $\nabla$ be a connection on $M$ compatible with the $G$-structure $\mathcal P$, and let $T$ denote its torsion.
A (local) vector field $K$ is a $G$-field
if and only if
\begin{equation}\label{eq:condGfield}
\nabla K(x)+T(K,\cdot)\in\mathfrak g_x
\end{equation}
for\footnote{i.e., the map $T_xM\ni v\mapsto\nabla_vK+T(K_x,v)\in T_xM$ belongs to $\mathfrak g_x$.} all $x$ in the domain of $K$.
\end{prop}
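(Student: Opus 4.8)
The plan is to reduce the statement to a pointwise tangency condition for the lift $\widetilde K$ of $K$ to the frame bundle, exploiting the computation of the vertical component already carried out in \eqref{eq:vercomptildeX}. The starting observation is that, by the very definition of the lifted diffeomorphism $\widetilde F_t(p)=\mathrm dF_t(x)\circ p$, the flow $F_t$ of $K$ consists of (local) automorphisms of the $G$-structure if and only if $\widetilde F_t(\mathcal P)\subseteq\mathcal P$, i.e.\ if and only if the lifted flow preserves $\mathcal P$ as a submanifold of $\mathrm{FR}(TM)$. Since $\widetilde F_t$ is exactly the flow of $\widetilde X=\widetilde K$, this preservation is equivalent, by the standard fact that a flow preserves a submanifold precisely when its infinitesimal generator is everywhere tangent to it, to the condition
\begin{equation*}
\widetilde K(p)\in T_p\mathcal P\qquad\text{for all } p\in\mathcal P.
\end{equation*}
Thus the whole problem becomes the identification of $T_p\mathcal P$ inside $T_p\mathrm{FR}(TM)$.

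Here I would use the compatibility of $\nabla$ with $\mathcal P$. Writing $T_p\mathrm{FR}(TM)=\Hor_p\oplus\Ver_p$ for the splitting induced by $\nabla$, the hypothesis that parallel transport of frames of $\mathcal P$ stays in $\mathcal P$ means exactly that every $\nabla$-horizontal curve through a point of $\mathcal P$ remains in $\mathcal P$; hence $\Hor_p\subseteq T_p\mathcal P$ for every $p\in\mathcal P$. A dimension count (the fibre of $\mathcal P$ being a $G$-orbit of dimension $\dim\mathfrak g$) then yields
\begin{equation*}
T_p\mathcal P=\Hor_p\oplus\big(\Ver_p\cap T_p\mathcal P\big),
\end{equation*}
where $\Ver_p\cap T_p\mathcal P$ is the tangent space to the fibre orbit. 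Under the canonical identification of $\Ver_p$ with $\mathrm{gl}(T_xM)$, in which $A\in\mathrm{gl}(T_xM)$ corresponds to the vertical vector represented by the frame-derivative $A\circ p$, this vertical part is carried exactly onto $\mathfrak g_x$: the fundamental vector field of $\xi\in\mathfrak g$ corresponds to the endomorphism $p\,\xi\,p^{-1}$, and $\{p\,\xi\,p^{-1}:\xi\in\mathfrak g\}=\mathfrak g_x$ independently of the chosen $p\in\mathcal P_x$, because $\mathfrak g$ is $\mathrm{Ad}(G)$-invariant.

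With these identifications in place the conclusion is immediate. By \eqref{eq:Xtilhor} the horizontal component of $\widetilde K(p)$ projects to $K(x)$, so it is the horizontal lift of $K(x)$ and lies in $\Hor_p\subseteq T_p\mathcal P$ automatically; therefore $\widetilde K(p)\in T_p\mathcal P$ if and only if the vertical component $[\widetilde K(p)]_{\mathrm{vert}}$ belongs to $\Ver_p\cap T_p\mathcal P$. But \eqref{eq:vercomptildeX} computes this vertical component as $\nabla K(x)\circ p+T(K(x),p\,\cdot)=\big(\nabla K(x)+T(K(x),\cdot)\big)\circ p$, whose image under the endomorphism identification is precisely $\nabla K(x)+T(K(x),\cdot)$. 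Hence tangency at $p$ is equivalent to $\nabla K(x)+T(K(x),\cdot)\in\mathfrak g_x$, and letting $x$ range over the domain of $K$ gives \eqref{eq:condGfield}.

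I would expect the main obstacle to be the bookkeeping of the vertical identification: ensuring that the isomorphism $\Ver_p\cong\mathrm{gl}(T_xM)$ used implicitly in \eqref{eq:vercomptildeX} is the same one under which the vertical tangent space of the $G$-orbit through $p$ corresponds to $\mathfrak g_x$, and checking that this correspondence is independent of the choice of $p$ in the fibre. Once this is pinned down, compatibility of $\nabla$ disposes of the horizontal direction for free, and the proposition follows from the already-established formula for the vertical component. The equivalence between flow-preservation and tangency is standard; for a local field one simply applies it to the local flow $\widetilde F_t$ and the embedded submanifold $\mathcal P$, which presents no real difficulty.
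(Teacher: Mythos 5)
Your argument is correct and follows essentially the same route as the paper's proof: reduce the statement to tangency of the lift $\widetilde K$ to $\mathcal P$, use the compatibility of $\nabla$ to identify $T_p\mathcal P$ as the set of vectors whose vertical component has the form $h\circ p$ with $h\in\mathfrak g_x$, and conclude via formula \eqref{eq:vercomptildeX}. You merely spell out in more detail the identification of the vertical tangent space of the fibre with $\mathfrak g_x$ and its independence of the choice of frame, which the paper leaves implicit.
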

\begin{proof}
$K$ is a $G$-field if and only if the flow of its lifting $\widetilde K$ preserves the $G$-structure $\mathcal P$,
that is, if and only if $\widetilde K$ is everywhere tangent to $\mathcal P$.
Using the fact that $\nabla$ is compatible with $\mathcal P$, one sees that the tangent space to $\mathcal P$ at some point $p\in\mathcal P$ is given by the space of vectors whose vertical component has the form $h\circ p$, with $h\in\mathfrak g_x$.
Therefore, using formula \eqref{eq:vercomptildeX} for the vertical component of liftings, we have
that $\widetilde K$ is tangent to $\mathcal P$ if and only if \eqref{eq:condGfield} holds
for all $x$.
\end{proof}
\subsubsection{Admissibility of the sheaf of $G$-fields for $G$-structures of finite type}
\begin{defi}\label{thm:defsheafGfields}
Given an open subset $U\subset M$, let $\mathcal F^\mathcal P(U)$ be the subspace of $\mathfrak X(U)$ consisting of all vector fields
$K$ satisfying $\nabla K(x)+ T\big(K(x),.\big)\in\mathfrak g_x$ for all $x\in U$. This is a sheaf of vector fields in $M$ that will be called
the sheaf of \emph{$G$-vector fields} (shortly, \emph{$G$-fields}).
\end{defi}
It is not hard to prove that the sheaf of $G$-fields consists of Lie algebras. We will give an indirect proof of this fact in Proposition~\ref{thm:transitiveimpliesregular}.
Given $x\in M$, for all $j\ge1$, define the \emph{$j$-th prolongation $\mathfrak g_x^{(j)}$} of $\mathfrak g_x$ as the Lie algebra:
\begin{multline}\label{eq:defprolongation}
\mathfrak g_x^{(j)}=\big\{L:T_xM^{(j+1)}\longrightarrow T_xM\ \text{multilinear and symmetric}:\\ \forall\,(v_1,\ldots,v_j)\in T_xM^{(j)},\\
\text{the map}\ T_xM\ni v\mapsto L(v,v_1,\ldots,v_j)\in T_xM\ \text{belongs to}\ \mathfrak g_x \big\}.
\end{multline}
In particular, $\mathfrak g_x^{(0)}=\mathfrak g_x$. The $G$-structure is said to be of \emph{finite type} if $\mathfrak g_x^{(j)}=\{0\}$ for some
$j\ge1$; this condition is independent of $x$. The \emph{order} of a finite type $G$ structure is the minimum $j$ for which
$\mathfrak g_x^{(j)}=\{0\}$. For instance, when $G$ is some orthogonal group $\mathrm O(n,k)$ (in which case a $G$-structure is a semi-Riemannian
structure in $M$), then the $G$-structure is of finite type and it has order $1$. If the group $G$ is $\mathrm{CO}(n,k)$ (in which case a $G$-structure is a semi-Riemannian conformal structure in $M$), is of finite type and it has order $2$.
\begin{prop}\label{thm:Gstructsheafboundedrank}
Let $\mathcal P$ be a finite type $G$-structure on $M^n$, with order $N\ge1$. Then, the sheaf $\mathcal F^\mathcal P$ has rank bounded by $n+\sum_{j=0}^{N-1}\mathrm{dim}\big(\mathfrak g_x^{(j)}\big)$.
\end{prop}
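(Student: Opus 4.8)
The plan is to bound $\dim\big(\mathcal F^{\mathcal P}(U)\big)$ for an arbitrary connected open $U$ by estimating the dimension of the space of $N$-jets of $G$-fields at a single point. I would fix $x_0\in U$ and consider the linear map $J_N\colon\mathcal F^{\mathcal P}(U)\to\mathcal J^N_{x_0}$ sending $K$ to its $N$-jet at $x_0$, computed with respect to the compatible connection $\nabla$, i.e. to the tuple $\big(K(x_0),\nabla K(x_0),\dots,\nabla^N K(x_0)\big)$ inside the finite-dimensional jet space $\mathcal J^N_{x_0}$. Since $\dim\big(\mathcal F^{\mathcal P}(U)\big)=\dim(\ker J_N)+\dim(\operatorname{im} J_N)$, it suffices to show that $\operatorname{im} J_N$ is contained in a subspace of dimension at most $n+\sum_{j=0}^{N-1}\dim\big(\mathfrak g_x^{(j)}\big)$ and that $J_N$ is injective.

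For the dimension of the image I would exploit the higher-order form of the characterization in Proposition~\ref{thm:charGfields}. The value $K(x_0)\in T_{x_0}M$ accounts for the summand $n$. Writing $A:=\nabla K+T(K,\cdot)$, the characterization says that $A$ is a section of the subbundle with fibres $\mathfrak g_x$, so $\nabla K(x_0)$ is determined by $K(x_0)$ together with the free datum $A(x_0)\in\mathfrak g_{x_0}=\mathfrak g_{x_0}^{(0)}$, contributing $\dim\big(\mathfrak g^{(0)}\big)$. Differentiating the identity $\nabla K=A-T(K,\cdot)$ and using that $\nabla$ is compatible with $\mathcal P$, so that the bundle $\mathfrak g_x$ is $\nabla$-parallel and hence $\nabla_v A\in\mathfrak g_x$, one sees inductively that the antisymmetric part of $\nabla^{j+1}K(x_0)$ is forced by curvature and lower-order data, while the symmetric part, modulo these already determined terms, defines an element of $\mathfrak g_{x_0}^{(j)}$ in the sense of \eqref{eq:defprolongation}. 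Summing these contributions for $j=0,\dots,N-1$ yields the claimed bound on $\dim(\operatorname{im} J_N)$; the crucial point is that at order $N$ the relevant datum lies in $\mathfrak g_{x_0}^{(N)}=\{0\}$, so no new freedom appears.

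The same mechanism gives injectivity of $J_N$ at the level of jets: since $\mathfrak g_{x_0}^{(N)}=\{0\}$, the symmetric part of $\nabla^{N+1}K(x_0)$ is completely determined by the $N$-jet, and inductively every higher covariant derivative of $K$ at $x_0$ becomes a function of the $N$-jet; thus a vanishing $N$-jet forces $K$ to vanish to infinite order at $x_0$. The main obstacle is to upgrade this to $K\equiv0$ on the connected set $U$ \emph{without} assuming analyticity. This is exactly the unique continuation property for $\mathcal F^{\mathcal P}$, which I would obtain from the absolute-parallelism argument underlying the finite-type condition: after $N$ prolongations the $G$-structure reduces to an $\{e\}$-structure, whose infinitesimal automorphisms satisfy a first-order linear system along curves and are therefore determined by their value at one point; tracing this back shows that $J_N(K)=0$ implies $K=0$ on the connected component of $x_0$. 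Granting injectivity, we conclude that $\dim\big(\mathcal F^{\mathcal P}(U)\big)\le n+\sum_{j=0}^{N-1}\dim\big(\mathfrak g_x^{(j)}\big)$ for every connected $U$, which is the asserted bounded-rank estimate.
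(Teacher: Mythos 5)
Your argument is correct, but it takes a genuinely different route from the paper: the paper's proof of Proposition~\ref{thm:Gstructsheafboundedrank} is a one-line reduction to the classical bound on the dimension of the automorphism group of a finite-type $G$-structure, citing Ehresmann, Sternberg, Ruh and Kobayashi--Nomizu, whereas you reconstruct the underlying jet-counting argument from scratch using the connection-theoretic characterization of Proposition~\ref{thm:charGfields}. Your two ingredients are exactly the ones the paper keeps separate: the injectivity of the $N$-jet map is precisely Proposition~\ref{thm:Nthjetvanishing} (proved there by the same prolongation-to-an-$\{e\}$-structure induction you sketch), and the dimension count on the image is the standard prolongation estimate. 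The count itself is sound; the cleanest way to organize it is via the kernel filtration $\ker J_0\supset\ker J_1\supset\cdots\supset\ker J_N=\{0\}$: if $K$ is a $G$-field with vanishing $j$-jet at $x_0$, then $\nabla^j\bigl(T(K,\cdot)\bigr)(x_0)=0$, so $\nabla^{j+1}K(x_0)=\nabla^j A(x_0)$ takes values in $\mathfrak g_{x_0}$ in its last slot (using that $\mathfrak g_U$ is a $\nabla$-parallel subbundle, which does follow from compatibility of $\nabla$ with $\mathcal P$), and the Ricci identities force total symmetry because all commutator terms involve the vanishing lower derivatives; hence $\nabla^{j+1}K(x_0)\in\mathfrak g^{(j)}_{x_0}$ and $\dim\bigl(\ker J_j\bigr)-\dim\bigl(\ker J_{j+1}\bigr)\le\dim\bigl(\mathfrak g^{(j)}\bigr)$, which sums to the stated bound. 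What your approach buys is a self-contained proof that makes visible why the order-$N$ hypothesis enters (namely $\mathfrak g^{(N)}=\{0\}$ kills the top increment and yields injectivity), and it exhibits the bound as a direct corollary of the machinery the paper already builds in Section~\ref{sec:fieldsGstructureFinType}; what the paper's citation buys is brevity and coverage of formulations (e.g.\ for orbifolds) already in the literature.
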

\begin{proof}
It follows immediately from the corresponding statement on the dimension of the automorphism group of a $G$-structure of finite type. This is a very classical result, first stated in \cite{Ehr58}, see also
\cite[Corollary~4.2, p.\ 348]{Sternberg}, \cite{Ruh64}, or \cite[Theorem~1, p.\ 333]{KobaNomi2}.
\end{proof}
Proposition~\ref{thm:Gstructsheafboundedrank} holds also in the case of $G$-structures on orbifolds, see \cite{BagZhu03}.
\begin{prop}\label{thm:Nthjetvanishing}
Let $G\subset\mathrm{GL}(n)$, let $\mathcal P$ be a  $G$-structure of order $N$ on a differentiable manifold $M^n$, and let $X$ be a $G$-field.
If the $N$-th order jet of $X$ vanishes at some point $x\in M$, then $X$ vanishes in a neighborhood of $x$.
If $M$ is connected, then $X$ vanishes identically.
\end{prop}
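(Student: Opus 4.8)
The plan is to show that the finite--type hypothesis forces the $(N+1)$-st covariant derivative of a $G$-field to be a universal linear expression in its derivatives of order $\le N$; once this is established, the $N$-jet of $X$ obeys a homogeneous linear first--order ODE along curves, and the vanishing of its initial value propagates, giving first the local statement and then, by a connectedness argument, the global one.

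First I would fix a connection $\nabla$ compatible with $\mathcal P$, with torsion $T$, and recall from Proposition~\ref{thm:charGfields} that the $G$-field condition \eqref{eq:condGfield} says precisely that $A:=\nabla X+T(X,\cdot)$ is, pointwise, an element of $\mathfrak g_x$; equivalently, $A$ is a section of the subbundle $\mathfrak g(TM)=\bigcup_x\mathfrak g_x$ of $\mathrm{End}(TM)$. Since $\nabla$ is compatible with $\mathcal P$, the induced connection on $\mathrm{End}(TM)$ preserves $\mathfrak g(TM)$; hence every iterated covariant derivative $\nabla^{j}A$ is again $\mathfrak g(TM)$-valued, i.e. $\nabla^{j}_{v_1,\dots,v_j}A\in\mathfrak g_x$ for all $v_1,\dots,v_j$.

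The heart of the argument is the following prolongation claim, proved by induction on $0\le j\le N$: modulo an expression depending only on the torsion, the curvature, their covariant derivatives, and the derivatives $X,\nabla X,\dots,\nabla^{j}X$, the tensor $\nabla^{j+1}X$ is totally symmetric and, as a symmetric multilinear map, belongs to the $j$-th prolongation $\mathfrak g^{(j)}_x$. Two facts combine here. First, by the Ricci commutation identity (with the torsion correction) the antisymmetrization of $\nabla^{j+1}X$ in any pair of arguments is a curvature term contracted with a derivative of order $\le j$, so $\nabla^{j+1}X$ is symmetric up to lower order. Second, writing $\nabla^{j+1}X=\nabla^{j}(\nabla X)=\nabla^{j}A-\nabla^{j}\big(T(X,\cdot)\big)$ and observing that $\nabla^{j}(T(X,\cdot))$ involves only $\nabla^{\le j}X$, one gets that for each fixed $(v_1,\dots,v_j)$ the endomorphism $v\mapsto\nabla^{j+1}X(v_1,\dots,v_j,v)$ equals $\nabla^{j}_{v_1,\dots,v_j}A\in\mathfrak g_x$ modulo lower order. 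Combined with the total symmetry, this says that for any $j$ of the arguments held fixed the remaining slot yields a $\mathfrak g_x$-endomorphism, which is exactly the defining condition \eqref{eq:defprolongation} of $\mathfrak g^{(j)}_x$. I expect the careful bookkeeping of these lower--order corrections, and in particular the verification that the endomorphism--input slot of $A$ may be treated symmetrically with the genuine differentiation slots (because that slot itself arises from a covariant derivative of $X$), to be the main technical obstacle.

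Applying the claim with $j=N$ and using that $\mathcal P$ has order $N$, so that $\mathfrak g^{(N)}_x=\{0\}$, the symmetric principal part of $\nabla^{N+1}X$ vanishes modulo lower order; commuting derivatives once more to control the non--symmetric components then yields $\nabla^{N+1}X=\Phi(X,\nabla X,\dots,\nabla^{N}X)$, where $\Phi$ is a universal linear map with coefficients that are smooth near $x$. Consequently the $N$-jet $J:=(X,\nabla X,\dots,\nabla^{N}X)$ satisfies, along any smooth curve $\gamma$ issuing from $x$, a homogeneous linear first--order ODE: covariant differentiation along $\gamma$ sends each entry $\nabla^{k}X$ to the next (contracted with $\dot\gamma$) for $k<N$, and sends $\nabla^{N}X$ into an expression linear in $J$ via $\Phi$. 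If the $N$-jet of $X$ vanishes at $x$ then $J(x)=0$, so by uniqueness for linear ODEs $J\equiv0$ along every such $\gamma$; letting $\gamma$ range over curves filling a normal neighborhood of $x$ gives $X\equiv0$ near $x$. Finally, the set $\{x: J(x)=0\}$ is closed by continuity and open by the local statement just proved, and it is nonempty by hypothesis; hence it is all of $M$ when $M$ is connected, and $X$ vanishes identically.
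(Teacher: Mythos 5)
Your argument is correct in outline but follows a genuinely different route from the paper's. The paper proves the proposition by induction on the order $N$ of the $G$-structure: the base case $N=0$ (a parallelism) is handled by observing that the zero set of a field commuting with a global frame is invariant under the flows of the frame fields, and the inductive step lifts $X$ to the field $\widetilde X$ on the frame bundle, notes via \eqref{eq:impobs} that vanishing of the $N$-jet of $X$ at $x$ forces vanishing of the $(N-1)$-jet of $\widetilde X$ over $x$, and applies the inductive hypothesis to $\widetilde X$ viewed as a $G^{(1)}$-field of the prolonged structure $\mathcal P^{(1)}$, which has order $N-1$. This sidesteps all tensor calculus. Your route is instead the classical Nomizu-style one, which the introduction of the paper explicitly contrasts with its own approach: prolong the first-order equation \eqref{eq:condGfield} to show that $\nabla^{N+1}X$ is a universal linear expression in the lower covariant derivatives, then run a homogeneous linear ODE along curves. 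What your approach buys is an explicit closed linear system for the $N$-jet, which incidentally re-proves the rank bound of Proposition~\ref{thm:Gstructsheafboundedrank}; what the paper's approach buys is brevity and the absence of curvature/torsion bookkeeping.

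One step of yours must be made precise, and it is exactly the one you flag as the main technical obstacle. Setting $A=\nabla X+T(X,\cdot)$, the tensor $T_N:=\nabla^NA$ is (exactly) a section of $\bigotimes^NT^*M\otimes\mathfrak g_x$ and agrees with $\nabla^{N+1}X$ up to terms of order $\le N$; but its total symmetrization is \emph{not} itself $\mathfrak g_x$-valued in the last slot, so one cannot conclude that "the symmetric principal part belongs to $\mathfrak g^{(N)}_x$ modulo lower order" merely by inspecting which slot carries the $\mathfrak g_x$-condition. The correct statement is linear-algebraic: the map sending an element of $\bigotimes^NV^*\otimes\mathfrak g_x\subset\bigotimes^{N+1}V^*\otimes V$ to the collection of its pairwise antisymmetrizations has kernel precisely $\mathfrak g^{(N)}_x$, because a tensor in $\bigotimes^NV^*\otimes\mathfrak g_x$ all of whose antisymmetrizations vanish is totally symmetric and $\mathfrak g_x$-valued in one slot, hence lies in the prolongation \eqref{eq:defprolongation}. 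Since $\mathfrak g^{(N)}_x=\{0\}$ this map is injective, and since the Ricci identities (with torsion corrections) express every antisymmetrization of $T_N$ linearly in terms of $X,\nabla X,\dots,\nabla^NX$ with coefficients built from curvature and torsion, $T_N$ itself --- hence $\nabla^{N+1}X$ --- is such a linear expression. With this lemma in place your ODE argument and the open--closed argument for the connected case go through as written.
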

\begin{proof}
The proof is by induction on $N$.

When $N=0$, then $G=\{1\}$ and $\mathcal P$ is a global frame $(X_1,\ldots,X_n)$ of $M$. In this case, the property of
being a $G$-field for $X$ means that $X$ commutes with each one of the $X_i$'s, i.e., $X$ is invariant by the flow of each $X_i$. In this case, the set of zeroes of $X$ is invariant by the flow of the $X_i$'s. Now, if $F^{X_i}_t$ denotes the flow of $X_i$ at time $t$, then, for fixed $x\in M$, the map:
\[
(t_1,...,t_n) \longmapsto F^{X_1}_{t_1}\circ\ldots\circ F^{X_n}_{t_n}(x)
\]
is a diffeomorphism from a neighborhood of $0$ in $\mathds R^n$ onto a neighborhood of $x$ in $M$.
In order to prove this, it suffices to observe that its derivative at $0$ sends the canonical basis of $\mathds R^n$ to $X_1(x),\ldots,X_n(x)$. Thus, if $X(x)=0$ and $X$ is invariant by the flows $F^{X_i}$, then $X$ vanishes in a neighborhood of $x$.

For the induction step, we need the notion of lifting of vector fields in $M$ to vector fields in the frame bundle $\mathrm{FR}(M)$ that was recalled in Subsection~\ref{sub:lifting}. Consider the lift $\widetilde X$ of $X$ to a vector field in $\mathrm{FR}(M)$.
From formula \eqref{eq:impobs} it follows readily that if the $N$-th jet of $X$ vanishes at $x$, then the
jet of order $(N-1)$ of $\widetilde X$ vanishes at every $p\in\mathrm{FR}(T_xM)$.

Observe that if $X$ is a $G$-field, then $\widetilde X$ is tangent to $\mathcal P$. Furthermore, if some diffeomorphism of $M$ preserves $\mathcal P$, then the lifting of such diffeomorphism to $\mathrm{FR}(TM)$ restricted to $\mathcal P$ preserves the $G^{(1)}$-structure $\mathcal P^{(1)}$ of $\mathcal P$, where $\mathcal P^{(1)}$ is the first prolongation of $\mathcal P$ (see \cite[Page 22]{koba72}). It follows that the restriction of $\widetilde X$ to $\mathcal P$ is a $G^{(1)}$-field for the $G^{(1)}$-structure $\mathcal P^{(1)}$.

Now, we can apply induction. Assume that $\mathcal P$ has order $k$, and that $X$ is a $G$-field for $\mathcal P$ with vanishing $N$-th jet at $x$.
The field $\widetilde X$ restricted to $\mathcal P$ is a $G^{(1)}$-field for $\mathcal P^{(1)}$ which has vanishing jet of order $(N-1)$ at $p$, for arbitrary $p\in\mathrm{FR}(T_xM)$. Since $\mathcal P^{(1)}$ has order $N-1$, the induction  hypothesis gives us that $\widetilde X$ vanishes identically in a neighborhood of $p$ in $\mathcal P$. It follows that $X$ vanishes in a neighborhood of $x$ in $M$ (the projection onto $M$ of the neighborhood of $p$). This concludes the proof of the first statement.

In particular, the result proves that the set of points at which the $N$-th order jet of $X$ vanishes is open.
Evidently, it is also closed, and therefore if $M$ is connected, then $X$ vanishes identically.
\end{proof}

\begin{cor}\label{thm:fincontpropfinorderGstruct}
The sheaf of local $G$-fields of a finite order $G$-structure has the unique continuation property.\qed
\end{cor}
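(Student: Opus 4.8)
The plan is to derive the corollary directly from Proposition~\ref{thm:Nthjetvanishing}, which does essentially all the work. First I would unwind the definition of the unique continuation property for the sheaf $\mathcal F^\mathcal P$ of $G$-fields: I fix a connected open set $U\subset M$ and a $G$-field $K\in\mathcal F^\mathcal P(U)$, together with a non-empty open subset $V\subseteq U$ on which $K$ vanishes identically, and my goal is to show that $K\equiv 0$ on all of $U$.

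The key observation is that, since $K$ vanishes on the \emph{open} set $V$, at any chosen point $x\in V$ all partial derivatives of $K$ of every order vanish; in particular the $N$-th order jet of $K$ at $x$ is zero, where $N$ is the (finite) order of the $G$-structure $\mathcal P$. I would then restrict the $G$-structure $\mathcal P$ to the connected open subset $U$, regarded as a manifold in its own right, and apply Proposition~\ref{thm:Nthjetvanishing} to the $G$-field $K$ on $U$: since $K$ has vanishing $N$-th jet at the point $x\in U$ and $U$ is connected, the proposition yields that $K$ vanishes identically on $U$. This is exactly the unique continuation property for $\mathcal F^\mathcal P$.

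I do not expect any serious obstacle here, as the substantive content has already been established in Proposition~\ref{thm:Nthjetvanishing}; the only points requiring care are the elementary remark that vanishing on an open set forces the full jet, and hence the $N$-th jet, to vanish at an interior point, and the observation that the finite-order hypothesis is precisely what guarantees that $N$ is finite, so that the proposition is applicable with this value of $N$.
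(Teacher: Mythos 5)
Your argument is exactly the one the paper intends: the corollary is stated with no written proof precisely because it follows immediately from Proposition~\ref{thm:Nthjetvanishing} in the way you describe (vanishing on an open set forces the $N$-th jet to vanish at an interior point, and the proposition, applied to the connected open set $U$, gives $K\equiv 0$ on $U$). The proposal is correct and takes essentially the same route as the paper.
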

\begin{rem}[Local uniqueness vs.\ analyticity]
\label{rem:locuniqvsanalyt}
It is an interesting question to establish for which $G$-structures of infinite type, the corresponding sheaf of local $G$-fields has the unique continuation property. A natural guess would be to consider real-analytic structures, this seems to have been claimed in \cite[Proposition~3.2, p.\ 5]{amores79}. However, there exist real-analytic $G$-structures of infinite type whose sheaf of local $G$-fields does not have the unique continuation property. Consider for instance a real-analytic symplectic manifold $(M^{2n},\omega)$. In this case, $G$ is the symplectic group $\mathrm{Sp}(2n)$.
Given any smooth local function $H:U\subset M\to\mathds R$, then its Hamiltonian field $\vec H$ has flow that preserves $\omega$. In particular, two distinct smooth functions that coincide on some non-empty open subset define distinct Hamiltonian fields, that coincide on that open set.
See also Remark~\ref{thm:notfintypenotal}.

Observe that, by Corollary~\ref{thm:fincontpropfinorderGstruct}, for $G$-structures of finite type, local uniqueness is independent of analyticity, thus, the two properties seem to be logically independent.
\end{rem}
\subsection{Affine connections and global parallelisms}
Let us now discuss an example of sheaf of vector field that does not fit in the theory of $G$-fields described in Section~\ref{sub:Gstructuresinfsymmetries}, but that can nevertheless be described in terms of a $G$-structure of finite type.

Let $M$ be an $n$-dimensional manifold endowed with an affine connection $\nabla$.
Let $\mathrm{FR}(TM)$ denote the frame bundle of $M$, which is the total space of a $\mathrm{GL}(n)$-principal bundle on $M$, and it has dimension equal to $n^2+n$. Denote by $\pi:\mathrm{FR}(TM)\to M$ the canonical projection.
Let us recall how to define a global parallelism on $\mathrm{FR}(TM)$, i.e., a $\{1\}$-structure, using the connection
$\nabla$.

Given $p\in\mathrm{FR}(TM)$, i.e., $p:\mathds R^n\to T_xM$ is an isomorphism for some $x\in M$,
then $T_p\big(\mathrm{FR}(TM)\big)$ admits a splitting into a vertical and a horizontal space, denoted respectively
$\mathrm{Ver}_p$ and $\mathrm{Hor}_p$. The vertical space $\mathrm{Ver}_p$ is canonical, and canonically identified with $\mathrm{Lin}(\mathds R^n,T_xM)$. The horizontal space $\mathrm{Hor}_p$ is defined by the connection $\nabla$, and it is isomorphic to $T_xM$ via the differential  $\mathrm d\pi(p):\mathrm{Hor}_p\to T_xM$.

Fix a frame $q:\mathds R^{n^2}\to\mathrm{Lin}(\mathds R^n,\mathds R^n)$.
A frame for $\mathrm{Ver}_p$ is given by $p \circ q:\mathds R^{n^2}\to\mathrm{Ver}_p$.
A frame for $\mathrm{Hor}_p$ is given by
\[\left(\mathrm d\pi_p\big\vert_{\mathrm{Hor}_p}\right)^{-1}\circ p:\mathds R^n\longrightarrow\mathrm{Hor}_p.\]
Putting these two things together, we obtain a global frame of $\mathrm{FR}(TM)$, which is determined by the choice of the horizontal distribution.

Clearly, not every global frame of $\mathrm{FR}(TM)$ arises from a connection on $M$. On the other hand, if two connections define the same global frame, then they coincide. Moreover, a smooth diffeomorphism $f:M\to M$ is affine, i.e., it preserves $\nabla$, if and only if the induced map $\widetilde f:\mathrm{FR}(TM)\to\mathrm{FR}(TM)$ preserves the associated global frame. Similarly, a (local) field $X$ on $M$ is an infinitesimal affine symmetry, i.e., its flow consists of affine local diffeomorphisms, if and only if its lift $\widetilde X$ to a local field on $\mathrm{FR}(TM)$ (see Section~\ref{sub:lifting}) is an infinitesimal symmetry of the $\{1\}$-structure on $\mathrm{FR}(TM)$ associated with the connection $\nabla$.
\smallskip

Let us denote by $\mathcal F^\nabla$ the sheaf of local affine fields of the affine manifold $(M,\nabla)$.
From the above considerations, we obtain easily:
\begin{prop}\label{thm:affinefieldsuniquebounded}
Let $(M,\nabla)$ be an affine manifold. The sheaf $\mathcal F^\nabla$ is admissible.
\end{prop}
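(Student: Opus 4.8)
The plan is to deduce both halves of admissibility---unique continuation and bounded rank---from the corresponding properties of the $\{1\}$-structure on the frame bundle, using the lifting correspondence recalled just above the statement. By that discussion, a local field $K$ on a connected open set $U\subseteq M$ lies in $\mathcal F^\nabla(U)$ exactly when its lift $\widetilde K$ (Section~\ref{sub:lifting}), a local field on $\pi^{-1}(U)\subseteq\mathrm{FR}(TM)$, is an infinitesimal symmetry of the $\{1\}$-structure on $\mathrm{FR}(TM)$ determined by $\nabla$. Since this is a $G$-structure with $G=\{1\}$, it is of finite type and finite order (order $0$), so the two key inputs available to me are that its sheaf of $G$-fields has the unique continuation property (Corollary~\ref{thm:fincontpropfinorderGstruct}) and that such symmetries are rigid at a point (the $N=0$ case of Proposition~\ref{thm:Nthjetvanishing}).

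For bounded rank, I would first note that $\mathcal F^\nabla(U)\ni K\mapsto\widetilde K$ is linear and injective: by \eqref{eq:Xtilhor} the horizontal part of $\widetilde K$ at $p$ equals $K(\pi(p))$, so $K$ is recovered as the projection of $\widetilde K$. The image lies in the space of infinitesimal symmetries of the $\{1\}$-structure on $\pi^{-1}(U)$, and for a global parallelism such a symmetry is determined by its value at a single point (Proposition~\ref{thm:Nthjetvanishing} with $N=0$). Hence that space injects into one tangent space of $\mathrm{FR}(TM)$ and so has dimension at most $\dim\mathrm{FR}(TM)=n^2+n$; composing the two injections gives $\dim\big(\mathcal F^\nabla(U)\big)\le n^2+n$ for every connected open $U$, i.e.\ bounded rank with $N^{\mathcal F^\nabla}=n^2+n$. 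This matches the classical bound $n(n+1)$ on the dimension of the affine symmetry algebra.

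For unique continuation, suppose $K\in\mathcal F^\nabla(U)$, with $U$ connected, vanishes on a nonempty open $V\subseteq U$. Since the lift is the first order linear operator of \eqref{eq:impobs}, $\widetilde K=L(K\circ\pi)$ vanishes on $\pi^{-1}(V)$, and $\widetilde K$ is a symmetry of the $\{1\}$-structure, which enjoys unique continuation. The one subtlety is that $\pi^{-1}(U)$ need not be connected, the $\mathrm{GL}(n)$-fibers having two components, so I would apply unique continuation separately on each connected component $C$ of $\pi^{-1}(U)$. Each such $C$ projects onto all of $U$---its image $\pi(C)$ is open by submersivity and closed by local triviality, hence clopen and nonempty in the connected set $U$, so $\pi(C)=U$---whence $C$ meets $\pi^{-1}(V)$ in a nonempty open set and unique continuation forces $\widetilde K|_C=0$. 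Therefore $\widetilde K\equiv 0$ on $\pi^{-1}(U)$, and projecting by \eqref{eq:Xtilhor} yields $K\equiv 0$.

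The only genuinely delicate step I anticipate is this transfer of unique continuation through $\pi$: one must confirm that the vanishing of $K$ on $V$ forces the lift to vanish on full fibers over $V$, and that the disconnectedness of $\pi^{-1}(U)$ does not obstruct the conclusion, which is exactly what the clopen-image observation handles. Everything else reduces to the injectivity of the lift together with the finite type results already established for $G$-structures.
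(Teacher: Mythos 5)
Your argument is correct and takes essentially the same route as the paper's proof, which simply observes that $X\mapsto\widetilde X$ injects $\mathcal F^\nabla$ into the sheaf of fields preserving the $\{1\}$-structure on $\mathrm{FR}(TM)$ associated with $\nabla$ and invokes the fact that bounded rank and unique continuation are inherited under such injections (Remark~\ref{rem:admissiblesubsheaf}). You have additionally supplied the details that the paper's one-line proof leaves implicit --- the recovery of $K$ from the horizontal part of $\widetilde K$, the dimension bound $n^2+n$ via the $N=0$ case of Proposition~\ref{thm:Nthjetvanishing}, and in particular the clopen-image argument showing that the disconnectedness of $\pi^{-1}(U)$ does not obstruct the transfer of unique continuation --- all of which are correct and worth making explicit.
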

\begin{proof}
The map $X\mapsto\widetilde X$ gives an injection of $\mathcal F^\nabla$ into the sheaf of local fields whose flow preserves the global frame on $\mathrm{FR}(TM)$ associated with  $\nabla$. The conclusion follows easily from the fact that both the bounded rank and the unique continuation property are inherited by sub-sheaves (Remark~\ref{rem:admissiblesubsheaf}).
\end{proof}
We will show in Section~\ref{sec:sprays} that the same result holds in the more general case of the sheaf of  vector  fields whose flow preserves the geodesics of a spray.
%
%

\section{On the regularity condition}
\label{sec:regularity}
The central assumption of Theorem~\ref{thm:globalregular}, i.e., that the space of germs of the admissible sheaf $\mathcal F$ should have constant dimension along $M$, is in general hard to establish. We will present here two situations where such regularity assumption is satisfied, namely the \emph{real-analytic} and the \emph{transitive} case.

\subsection{The real-analytic case}
\label{sec:realanalyticcase}
Let us now consider a real-analytic manifold $M$, and let $\mathfrak X^\omega$ denote the sheaf of local real-analytic vector fields on $M$. A sheaf $\mathcal F$ of local vector fields on $M$ is said to be \emph{real-analytic} if $\mathcal F(U)\subset\mathfrak X^\omega(U)$ for all open subset $U\subset M$.

\begin{prop}\label{thm:realanalyticsheaf}
If $\mathcal P\subset\mathrm{FR}(TM)$ is a real-analytic $G$-structure of finite type on $M$,
then the sheaf $\mathcal F^\mathcal P$ of local $G$-fields on $M$ (Definition~\ref{thm:defsheafGfields}) is real-analytic.
\end{prop}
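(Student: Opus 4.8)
The statement is local, so the plan is to prove that every $K\in\mathcal F^\mathcal P(U)$ is real-analytic on a neighborhood of each point. Fix $p\in U$. Because $\mathcal P$ is a real-analytic principal $G$-subbundle of $\mathrm{FR}(TM)$, it admits a real-analytic local section over a connected neighborhood $V\ni p$, i.e.\ a real-analytic $\mathcal P$-frame field $(e_1,\dots,e_n)$. Declaring this frame field parallel defines a connection $\nabla$ on $V$ whose parallel transport carries $\mathcal P$-frames to $\mathcal P$-frames (so $\nabla$ is compatible with $\mathcal P$), and whose coefficients and torsion $T$ are real-analytic. With this choice, the lifting formula \eqref{eq:vercomptildeX} shows that the lift $\widetilde X$ of a vector field $X$ is real-analytic as soon as $X$ is; conversely, since $\widetilde X$ projects onto $X$ by \eqref{eq:Xtilhor}, real-analyticity of $\widetilde X$ forces that of $X$. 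Thus real-analyticity is transferred in both directions along the lifting construction.

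Next I would run the prolongation tower exactly as in the proof of Proposition~\ref{thm:Nthjetvanishing}. If $K$ is a $G$-field, then its lift $\widetilde K$ is tangent to $\mathcal P$ and its restriction to $\mathcal P$ is a $G^{(1)}$-field for the first prolongation $\mathcal P^{(1)}$, which is a real-analytic structure of order $N-1$. Iterating $N$ times reduces the problem to a $\{1\}$-structure, i.e.\ a real-analytic absolute parallelism on the real-analytic iterated frame bundle; by the up-and-down transfer of analyticity from the previous paragraph, it suffices to prove that every infinitesimal automorphism of a real-analytic absolute parallelism is real-analytic.

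For the base case, let $(X_1,\dots,X_m)$ be the real-analytic global frame and let $Y$ be an infinitesimal automorphism, characterized by $[Y,X_i]=0$ for all $i$. Writing $Y=\sum_j f_j X_j$ and $[X_i,X_j]=\sum_k c_{ij}^k X_k$ with real-analytic structure functions $c_{ij}^k$, these relations become $X_i(f_j)=\sum_k c_{ik}^j f_k$. Since $(X_i)$ is a frame, this prescribes the full differential of $F=(f_1,\dots,f_m)$ as $\mathrm dF=\Omega\,F$, where $\Omega$ is a real-analytic matrix of $1$-forms; it is a closed (Frobenius-integrable) first-order linear system. The engine of the proof is then the regularity fact that any $C^1$ solution of such a system with real-analytic $\Omega$ is itself real-analytic: one integrates along real-analytic curves and uses the analytic dependence of the solution of a linear ODE on the time variable, the initial data and parameters (equivalently, one bootstraps the Taylor series of $F$ from the closed system, with Cauchy estimates). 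This yields that $F$, hence $Y$, is real-analytic, and the tower then delivers real-analyticity of $K$.

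I expect the main obstacle to be twofold. First, one must verify that the data produced at each level of the prolongation tower is genuinely real-analytic: the real-analyticity of the iterated frame bundles and of the prolonged parallelism, together with the \emph{local} existence of a real-analytic compatible connection used to set up the lifts (which is why the argument is run on the neighborhood $V$ of the analytic section, rather than globally via a partition of unity). Second, one must establish cleanly the ODE-regularity statement for closed linear first-order systems with real-analytic coefficients, which is the analytic heart of the argument. As an alternative to this second point, one could instead invoke the classical regularity theory for local automorphisms of $G$-structures of finite type, which asserts that such automorphisms inherit the regularity of the underlying structure.
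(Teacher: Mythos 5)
Your proposal follows the same skeleton as the paper's proof: induction on the order of the $G$-structure via the prolongation tower, transferring analyticity down from the lift $\widetilde X$ on $\mathcal P$ (a $G^{(1)}$-field for $\mathcal P^{(1)}$, of order $N-1$) to $X=\mathrm d\pi(\widetilde X)$, and reducing everything to the case of a real-analytic $\{1\}$-structure. Where you diverge is in the base case. The paper conjugates by the real-analytic local diffeomorphism $f^x(t_1,\ldots,t_n)=F^{X_1}_{t_1}\circ\cdots\circ F^{X_n}_{t_n}(x)$ built from the flows of the frame fields and reads off the identity $X\big(f(t_1,\ldots,t_n,x)\big)=\frac{\partial f}{\partial x}(t_1,\ldots,t_n,x)X(x)$, which exhibits $X$ near $x$ as a manifestly real-analytic expression in one stroke. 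You instead expand $Y=\sum_j f_jX_j$, turn $[Y,X_i]=0$ into the closed linear system $\mathrm dF=\Omega\,F$ with real-analytic $\Omega$, and invoke analytic regularity for smooth solutions of such systems (integration along analytic curves plus analytic dependence on initial data and parameters). Both are correct and both ultimately rest on the same mechanism --- the field is propagated from a single value by an analytic linear ODE --- but the paper's formulation makes the regularity immediate, whereas yours isolates a general ODE-regularity lemma that you must still state and prove cleanly (your sketch of it is adequate, though you should be careful not to assume Frobenius integrability of $\mathrm d-\Omega$; you only have one solution, so the argument should run through uniqueness and analytic dependence along a fixed analytic family of curves, e.g.\ radial lines in a chart, exactly as you indicate). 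Your extra care about constructing a real-analytic compatible connection from a local analytic section is harmless but not strictly needed here, since the lift $\widetilde X$ is defined canonically by lifting the flow and its analyticity in terms of that of $X$ (and conversely, via $\mathrm d\pi$) does not depend on the choice of connection.
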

\begin{proof}
This is proven by induction on the order of the $G$-structure.
Recall that a local field $X$ is a $G$-field if and only if admits a lifting $\widetilde X$ to $\mathcal P$ that preserves the first prolongation $\mathcal P^{(1)}$ of $\mathcal P$. By the induction hypothesis, $\widetilde X$ is real-analytic, and therefore so is $X$, which is the projection of $\widetilde X$
by the real-analytic map $\mathrm d\pi:T\mathcal P\to TM$.

It remains to show the basis of induction, i.e., the case $G=\{1\}$. In this case, the $G$-structure is a global frame $(X_1,\ldots,X_n)$ of real-analytic fields. Let us define:
\begin{equation}\label{eq:localdiffeo1struct}
f(t_1,\ldots,t_n,x)=F_{t_1}^{X_1}\circ\cdot\circ F_{t_n}^{X_n}(x),
\end{equation}
where $F_t^{X_i}$ is the flow of the field $X_i$ at time $t$. The map $f$ is real-analytic. For $x\in M$ fixed, the map $(t_1,\ldots,t_n)\mapsto f^x(t_1,\ldots,t_n):=f(t_1,\ldots,t_n,x)$ is a real-analytic diffeomorphism from a neighborhood of $0$ in $\mathds R^n$ to a neighborhood of $x$ in $M$.
If $X$ is a local field that preserves the $\{1\}$-structure, then:
\begin{equation}\label{eq:Xpreserving1}
X\big(f(t_1,\ldots,t_n,x)\big)=\frac{\partial f}{\partial x}(t_1,\ldots,t_n,x)X(x).
\end{equation}
If we fix $x$, equality \eqref{eq:Xpreserving1} says that $X$ is real-analytic in a neighborhood of $x$. This concludes the proof.
\end{proof}
\begin{rem}\label{thm:rem1fielddetermined}
The last argument in the proof of Proposition~\ref{thm:realanalyticsheaf} shows that, given a frame $X_1,\ldots,X_n$ on a manifold $M$, if $X$ is a local field whose flow preserves the frame, defined in a neighborhood of a point $q\in M$, then for $z$ near $q$, $X(z)$ is given by $\frac{\partial f}{\partial x}\big((f^q)^{-1}(z),q\big)X(q)$, where $f$ is given in \eqref{eq:localdiffeo1struct} and $f^q(t_1,\ldots,t_n):=f(t_1,\ldots,t_n,q)$.
\end{rem}
\begin{rem}\label{thm:notfintypenotal}
We remark that Proposition~\ref{thm:realanalyticsheaf} fails to hold for $G$-structures that are
not of finite type. For instance, given any real-analytic manifold $M^n$, one can consider the real-analytic $\mathrm{GL}(n)$-structure on $M$ given by $\mathcal P=\mathrm{FR}(TM)$. Then, any local vector field is a $G$-field. See also Remark~\ref{rem:locuniqvsanalyt}.
\end{rem}

Let us now introduce the following class of real-analytic sheaves. For a given open subset $U\subset M$, we will denote by $C^\omega(U)$ the space of
real-analytic functions $f:U\to\mathds R$.
\begin{defi}\label{def:realanaldet}
Let $\mathcal F$ be a real-analytic sheaf. We say that $\mathcal F$ is \emph{analytically determined} if for all open subset $U$ there exists a collection $\mathcal A(U)$ of functions $F:\mathfrak X^\omega(U)\to C^\omega(U)$ satisfying the following two properties:
\begin{itemize}
\item[(a)] Locality: given open sets $V\subset U$ and $F\in\mathcal A(U)$, then for all $X\in\mathfrak X^\omega(U)$, the restriction
$F^X\big\vert_V$ is equal to $G^{X\vert_V}$ for some $G\in\mathcal A(V)$.
\item[(b)] For all open subset $U$ and all $X\in\mathfrak X^\omega(U)$,  $X\in\mathcal F(U)$ if and only if $F^X\equiv0$ on $U$ for all $F\in\mathcal A(U)$.
\end{itemize}
\end{defi}
\begin{rem}\label{rem:refinaranalyticdeter}
The condition of being analytically determined can be relaxed considering, not all the open sets $U$, but a base of sufficiently small open sets.
\end{rem}
The definition above is modeled on the following example.
\begin{exe}\label{exa:Killinganaldet}
If $(M,g)$ is a real-analytic Riemannian manifold, and $\mathcal F$ is the sheaf of local Killing fields of $(M,g)$, then $\mathcal F$ is analytically determined. Given an open subset $U\subset M$, then the collection $\mathcal A(U)$ is the set of functions $\big\{F_V:V\in\mathfrak X^\omega(U)\big\}$, defined by:
\[F_V^X=g(\nabla_VX,V).\]
\end{exe}
The property of an analytically determined sheaf $\mathcal F$ which is important for us is the fact that real-analytic extensions of local
$\mathcal F$-fields are $\mathcal F$-fields:
\begin{lemma}\label{thm:anextF}
Let $\mathcal F$ be an analytically determined sheaf, $U\subset M$ a connected open set and $V\subset U$ a non-empty open subset. If $X\in\mathfrak X^\omega(U)$ is such that $X\big\vert_V$ belongs to $\mathcal F(V)$, then $X\in\mathcal F(U)$.
\end{lemma}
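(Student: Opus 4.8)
The plan is to verify directly the characterization of membership in $\mathcal F(U)$ given by property (b) of Definition~\ref{def:realanaldet}. Since $X\in\mathfrak X^\omega(U)$ is real-analytic, by (b) we have $X\in\mathcal F(U)$ if and only if $F^X\equiv 0$ on $U$ for every $F\in\mathcal A(U)$. Thus it suffices to fix an arbitrary $F\in\mathcal A(U)$ and show that the real-analytic function $F^X\in C^\omega(U)$ vanishes identically on $U$.

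First I would invoke the locality property (a): applied to the pair $V\subset U$ and the chosen $F$, it produces some $G\in\mathcal A(V)$ with $F^X\big\vert_V = G^{X\vert_V}$. Next, the hypothesis $X\vert_V\in\mathcal F(V)$ together with property (b) applied to the open set $V$ gives that $H^{X\vert_V}\equiv 0$ on $V$ for \emph{every} $H\in\mathcal A(V)$; in particular $G^{X\vert_V}\equiv 0$ on $V$. Combining these two facts yields $F^X\big\vert_V\equiv 0$.

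The final step is the only place where analyticity enters: $F^X$ is a real-analytic function on the \emph{connected} open set $U$ which vanishes on the non-empty open subset $V$. By the identity theorem for real-analytic functions (unique continuation for $C^\omega$ functions on a connected manifold), it follows that $F^X\equiv 0$ on all of $U$. As $F\in\mathcal A(U)$ was arbitrary, property (b) then yields $X\in\mathcal F(U)$, which is the desired conclusion.

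I do not expect a genuine obstacle here: the argument is a straightforward unwinding of the two defining properties of an analytically determined sheaf, combined with classical real-analytic unique continuation. The single point deserving minor care is the correct reading of the quantifiers in the locality property (a), namely that the matching element $G\in\mathcal A(V)$ exists for our specific field $X$ and the specific $F$ under consideration; this is precisely what (a) supplies, so no additional hypothesis is needed.
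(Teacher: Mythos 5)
Your argument is correct and coincides with the paper's own proof: both fix an arbitrary $F\in\mathcal A(U)$, use locality (a) together with property (b) on $V$ to conclude $F^X\big\vert_V\equiv 0$, invoke the identity theorem for real-analytic functions on the connected set $U$ to get $F^X\equiv 0$, and then apply (b) on $U$. The only difference is that you spell out the role of the intermediate element $G\in\mathcal A(V)$, which the paper leaves implicit.
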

\begin{proof}
Given an arbitrary $F\in\mathcal A(U)$, then, since $X\big\vert_V\in\mathcal F(V)$, by properties (a) and (b) in Definition~\ref{def:realanaldet}, $F^X\big\vert_V=0$.
Since $U$ is connected, $V$ non-empty and $F^X$ real-analytic, it follows $F^X\equiv0$ in $U$. Thus, by (b) of Definition~\ref{def:realanaldet},
$X\in\mathcal F(U)$.
\end{proof}
Let us generalize Example~\ref{exa:Killinganaldet} to arbitrary $G$-structures of finite type.
\begin{prop}\label{thm:analyticallydetsheaf}
If $\mathcal P\subset\mathrm{FR}(TM)$ is a real-analytic $G$-structure of finite type on $M$,
then the sheaf $\mathcal F^\mathcal P$ of local $G$-fields on $M$ is analytically determined.
\end{prop}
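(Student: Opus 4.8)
The plan is to generalize the construction of Example~\ref{exa:Killinganaldet}, replacing the Killing equation by the differential characterization of $G$-fields from Proposition~\ref{thm:charGfields}. Since Proposition~\ref{thm:realanalyticsheaf} already guarantees that $\mathcal F^{\mathcal P}$ is a real-analytic sheaf, the only task is to exhibit, for each $U$, a collection $\mathcal A(U)$ of functions $\mathfrak X^\omega(U)\to C^\omega(U)$ satisfying (a) and (b) of Definition~\ref{def:realanaldet}. Following Remark~\ref{rem:refinaranalyticdeter}, I would verify the conditions only on a base of sufficiently small open sets $U$, chosen small enough to carry a real-analytic adapted frame $s:U\to\mathcal P$, and hence a real-analytic connection $\nabla$ compatible with $\mathcal P$ (for instance the one making $s$ parallel, whose parallel transport sends $\mathcal P$-frames to $\mathcal P$-frames). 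Writing $T$ for its torsion, I attach to each $X\in\mathfrak X^\omega(U)$ the endomorphism field $A_X(x)\colon v\mapsto\nabla_vX+T(X(x),v)$; since $\nabla$ and $T$ are real-analytic, $A_X$ is a real-analytic section of $\mathrm{End}(TM)$ over $U$, and $X\mapsto A_X$ is a linear first-order local differential operator.

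Next I would define the test functions. Because $\mathfrak g_x=s(x)\,\mathfrak g\,s(x)^{-1}$ depends real-analytically on $x$, the subspaces $\mathfrak g_x$ assemble into a real-analytic subbundle $\mathbf g\subset\mathrm{End}(TM)\big\vert_U$; let $\mathbf g^{\circ}\subset\mathrm{End}(TM)^*\big\vert_U$ denote its annihilator subbundle, which is again real-analytic. For each real-analytic section $\mu$ of $\mathbf g^{\circ}$ over $U$ I set $F_\mu^X(x)=\langle\mu(x),A_X(x)\rangle$, and let $\mathcal A(U)$ be the collection of all such $F_\mu$. Each $F_\mu^X$ is real-analytic, so $F_\mu^X\in C^\omega(U)$ as required. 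By Proposition~\ref{thm:charGfields}, $X\in\mathcal F^{\mathcal P}(U)$ if and only if $A_X(x)\in\mathfrak g_x$ for every $x$; since $\mathfrak g_x=(\mathbf g^{\circ}_x)^{\circ}$, this is the same as $\langle\mu(x),A_X(x)\rangle=0$ for every annihilator covector, i.e. property (b). Property (a) is immediate: if $V\subset U$, then $\mu\big\vert_V$ is a real-analytic section of $\mathbf g^{\circ}$ over $V$, and since $A$ is a local operator one has $A_{X\vert_V}=A_X\big\vert_V$, whence $F_\mu^X\big\vert_V=F_{\mu\vert_V}^{X\vert_V}$ with $F_{\mu\vert_V}\in\mathcal A(V)$.

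The hard part will be the reverse implication in (b): deducing the pointwise condition $A_X(x)\in\mathfrak g_x$ from the vanishing of \emph{all} $F_\mu^X$. Here I would use that over the small set $U$ the bundle $\mathbf g^{\circ}$ is trivial, so that for any $x_0\in U$ and any $\phi\in\mathbf g^{\circ}_{x_0}$ there is a real-analytic section $\mu$ of $\mathbf g^{\circ}$ over $U$ with $\mu(x_0)=\phi$; if $A_X(x_0)\notin\mathfrak g_{x_0}$, one could pick $\phi$ with $\langle\phi,A_X(x_0)\rangle\neq0$, contradicting $F_\mu^X\equiv0$. This is precisely why restricting to a base of small open sets (Remark~\ref{rem:refinaranalyticdeter}) is the convenient framework: it removes the need to manufacture real-analytic compatible connections and real-analytic annihilator sections with prescribed values on large, topologically nontrivial domains, where no real-analytic partition of unity is available. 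The finite-type hypothesis itself enters only indirectly, through Proposition~\ref{thm:realanalyticsheaf}, which is what makes $\mathcal F^{\mathcal P}$ real-analytic in the first place and thus eligible to be analytically determined.
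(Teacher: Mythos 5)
Your proof is correct and follows essentially the same route as the paper: both use the characterization $\eta_X=\nabla X+T(X,\cdot)\in\mathfrak g_x$ from Proposition~\ref{thm:charGfields} and take as test functions the pairings of $\eta_X$ with real-analytic sections of the annihilator bundle $\mathfrak g_U^o\subset\mathrm{End}(TU)^*$. The only (harmless, arguably cleaner) difference is that you localize via Remark~\ref{rem:refinaranalyticdeter} to small trivializing sets, which lets you build the compatible real-analytic connection and the pointwise-separating annihilator sections by hand, whereas the paper works on arbitrary $U$ and relies on Appendix~\ref{app:RAcompconnections} for the global real-analytic compatible connection.
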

\begin{proof}
Let us choose a compatible real-analytic connection $\nabla$ with torsion $T$. The existence of real-analytic connections compatible with real-analytic $G$-structures is proved in Appendix~\ref{app:RAcompconnections}.
For any $U\subset M$ and any $X\in\mathcal F^\mathcal P(U)$, the $(1,1)$-tensor field $\eta_X:=\nabla X+T(X,\cdot)$ is real-analytic on $U$.
Consider the real-analytic vector bundle $\mathfrak g_U$ on $U$, whose fiber at the point $p\in U$ is the Lie algebra $\mathfrak g_p$ (see Subsection~\ref{sub:charGfields}). This is a vector sub-bundle of the vector bundle $\mathrm{End}(TU)$ whose fiber at $p\in U$ is given by the endomorphisms of $T_p U$.
Consider the dual vector bundle $\mathrm{End}(TU)^*$, and its sub-bundle $\mathfrak g_U^o$, whose fiber at the point $p\in U$ is the annihilator $\mathfrak g_p^o$ of $\mathfrak g_p$. All these bundles are obtained by natural operations, and they are real-analytic.

Having settled these notations, we can now describe the collection $\mathcal A(U)$ as follows:
\[\mathcal A(U)=\Big\{F_\varrho:\varrho\ \text{real-analytic section of $\mathfrak g_U^o$}\Big\},\]
where, for $X\in\mathfrak X^\omega(U)$, $F_\varrho^X:U\to\mathds R$ is defined by:
\[F_\varrho^X=\varrho(\eta_X).\]
Property (a) of Definition~\ref{def:realanaldet} for the family $\mathcal A(U)$ is immediate. Now, observe that $F_\varrho^X\equiv0$ on $U$ for all $\varrho$
real-analytic section of $\mathfrak g_U^o$ if and only if $\eta_X(p)\in\mathfrak g_p$ for all $p\in U$.
By Proposition~\ref{thm:charGfields}, this implies that $\mathcal A(U)$ satisfies property (b) of Definition~\ref{def:realanaldet}, and proves that
$\mathcal F^\mathcal P$ is analytically determined.
\end{proof}

Let us now show how analytic determinacy of a sheaf $\mathcal F$ is related to the property of regularity.
\begin{prop}\label{thm:ADimpliesreg}
Let $M$ be a real-analytic connected manifold, and let $\mathcal F$ be an admissible sheaf of vector fields which is analytically determined.
Assume that every $p\in M$ admits an open neighborhood $\mathcal V_p$ with the following property: given any $q\in\mathcal V_p$ and any $\mathcal F$-germ $\mathfrak g\in\mathfrak G_q^\mathcal F$, then there exists $X\in\mathfrak X^\omega(\mathcal V_p)$ such that $\mathrm{germ}_q(X)=\mathfrak g$.
Then $\mathcal F$ is regular.
\end{prop}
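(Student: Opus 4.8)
The plan is to show that the integer-valued function $p\mapsto\kappa_p^{\mathcal F}$ is locally constant on $M$; since $M$ is connected, this yields that $\kappa^{\mathcal F}$ is globally constant, which is precisely the definition of regularity. Lower semi-continuity of $\kappa^{\mathcal F}$ is already available from Proposition~\ref{thm:lowerSCkappa}, so the whole matter reduces to establishing the reverse estimate: for each $p\in M$ there is a neighborhood on which $\kappa_q^{\mathcal F}\le\kappa_p^{\mathcal F}$ (upper semi-continuity). The key tool for producing this inequality will be Lemma~\ref{thm:anextF}, which asserts that a real-analytic field on a connected open set that agrees with an $\mathcal F$-field on a non-empty open subset is itself an $\mathcal F$-field.

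Fix $p\in M$ and let $\mathcal V_p$ be the neighborhood provided by the hypothesis; after passing to the connected component containing $p$ and restricting the analytic extensions accordingly, I may assume $\mathcal V_p$ is connected. Fix an arbitrary $q\in\mathcal V_p$ and set $k=\kappa_q^{\mathcal F}=\dim(\mathfrak G_q^{\mathcal F})$, which is finite by admissibility (Lemma~\ref{thm:exispecialneighborhoods}). I would choose a basis $\mathfrak g_1,\ldots,\mathfrak g_k$ of $\mathfrak G_q^{\mathcal F}$. By hypothesis, each $\mathfrak g_i$ admits a real-analytic representative $X_i\in\mathfrak X^\omega(\mathcal V_p)$ with $\germ_q(X_i)=\mathfrak g_i$. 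Since $\mathfrak g_i$ is an $\mathcal F$-germ, it has a genuine $\mathcal F$-field representative which, having the same germ at $q$ as $X_i$, coincides with $X_i$ on an honest open neighborhood of $q$; hence $X_i$ restricts to an $\mathcal F$-field on an open set around $q$, and Lemma~\ref{thm:anextF} then upgrades this to $X_i\in\mathcal F(\mathcal V_p)$.

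The fields $X_1,\ldots,X_k$ are linearly independent in $\mathcal F(\mathcal V_p)$, because their germs at $q$ form a basis of $\mathfrak G_q^{\mathcal F}$ and the germ map \eqref{eq:germmap} is injective; therefore $\dim(\mathcal F(\mathcal V_p))\ge k$. On the other hand, $\mathcal V_p$ is a connected open neighborhood of $p$, so $\dim(\mathcal F(\mathcal V_p))\le\kappa_p^{\mathcal F}$ by the very definition \eqref{eq:defkappaFp}. Combining the two inequalities gives $\kappa_q^{\mathcal F}=k\le\kappa_p^{\mathcal F}$ for every $q\in\mathcal V_p$, which is the desired upper semi-continuity. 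Together with the lower semi-continuity of Proposition~\ref{thm:lowerSCkappa}, this makes $\kappa^{\mathcal F}$ locally constant, hence constant on the connected manifold $M$, so $\mathcal F$ is regular.

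The step I expect to require the most care is the passage from ``$X_i$ has an $\mathcal F$-germ at $q$'' to ``$X_i\in\mathcal F(\mathcal V_p)$'': one must verify that the real-analytic field $X_i$ genuinely agrees with a true $\mathcal F$-field on a non-empty open set (not merely that their germs coincide), so that the hypothesis of Lemma~\ref{thm:anextF} is literally satisfied, and that the connectedness of $\mathcal V_p$ is indeed used there to propagate the analytic identity. Everything else is bookkeeping with the monotonicity \eqref{eq:monotonicity} and the definitions of $\kappa^{\mathcal F}$ and of $\mathcal F$-germs.
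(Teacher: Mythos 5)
Your argument is correct and follows essentially the same route as the paper: realize each germ at each $q\in\mathcal V_p$ by a real-analytic field on $\mathcal V_p$, upgrade it to an $\mathcal F$-field via Lemma~\ref{thm:anextF}, conclude that $\mathcal V_p$ is $\mathcal F$-special for all its points (which you phrase as the two inequalities $\kappa_q^{\mathcal F}\le\dim\mathcal F(\mathcal V_p)\le\kappa_p^{\mathcal F}$ together with lower semi-continuity), and finish by connectedness of $M$. Your extra care about the connectedness of $\mathcal V_p$ and about the germ representatives genuinely agreeing on an open set is sound and merely makes explicit what the paper leaves implicit.
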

\begin{proof}
Given $p\in M$, $\mathcal V_p\subset M$, $q\in\mathcal V_p$, $\mathfrak g\in\mathfrak G^\mathcal F_q$, $X\in\mathfrak X^\omega(\mathcal V_p)$ and  $\mathrm{germ}_q(X)=\mathfrak g$ as in the statement, since $\mathcal F$ is analytically determined, by Lemma \ref{thm:anextF}, it follows that $X\in\mathcal F(\mathcal V_p)$. In the terminology of Section~\ref{sub:Fspecial}, $\mathcal V_p$ is $\mathcal F$-special for all its points, and
therefore $\kappa^\mathcal F$ is constant on $\mathcal V_p$, i.e., the map $\kappa^\mathcal F:M\to\mathds N$ is locally constant. Since $M$ is connected, then $\kappa^\mathcal F$ is constant and $\mathcal F$ is regular.
\end{proof}
 Proposition~\ref{thm:ADimpliesreg}  can be applied in very general situations, when the values along a smooth curve of a field belonging to the sheaf $\mathcal F$ are given by ordinary differential equations with real-analytic coefficients. In this case, local real-analytic extensions of $\mathcal F$-germs are obtained by integrating along a real-analytic family of curves whose images cover a neighborhood of a given point (for instance, radial geodesics issuing from the point). An example where Proposition~\ref{thm:ADimpliesreg} applies is given, again, by $G$-structures of finite type. For such a result, we require first the following technical lemma

 \begin{lemma}\label{lemmaprojectable}
Consider $\pi:C\rightarrow M$ an analytic submersion and assume that there exists an analytic vector field $\tilde{X}$ defined in an open connected set $U\subset C$. If $\tilde{X}$ is projectable in an open subset $V\subset U$, then it is projectable on the entire $U$.
 \end{lemma}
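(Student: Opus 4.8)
The plan is to reformulate projectability as the vanishing of a single real-analytic tensor field on $U$, and then to invoke the unique continuation (identity) principle for real-analytic objects on the connected set $U$. Recall that $\tilde X$ is \emph{projectable} precisely when $\mathrm d\pi(\tilde X)$ is (locally) constant along the fibres of $\pi$, i.e. when $\mathrm d\pi_p(\tilde X_p)\in T_{\pi(p)}M$ depends only on $\pi(p)$. Since $\pi$ is an analytic submersion, the vertical bundle $\Ver:=\ker\mathrm d\pi\subset TC$ is an analytic subbundle of constant rank $\dim C-\dim M$, and $\mathrm d\pi$ induces an analytic isomorphism $TC/\Ver\cong\pi^*TM$.

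First I would introduce the obstruction map. For an analytic vertical vector field $Y$ on an open set, set $\Phi_{\tilde X}(Y):=\mathrm d\pi\big([\tilde X,Y]\big)$. Using $[\tilde X,fY]=f[\tilde X,Y]+(\tilde X f)\,Y$ together with $\mathrm d\pi(Y)=0$ for vertical $Y$, one checks that $\Phi_{\tilde X}(fY)=f\,\Phi_{\tilde X}(Y)$, so $\Phi_{\tilde X}$ is $C^\omega$-linear in $Y$ and hence defines an analytic section of the analytic bundle $\mathrm{Hom}(\Ver,\pi^*TM)$ over $U$. In a fibred analytic chart $(x^1,\dots,x^m,y^1,\dots,y^k)$ with $\pi(x,y)=x$, writing $\tilde X=\sum_i a^i\,\partial_{x^i}+\sum_l b^l\,\partial_{y^l}$, one gets $\Phi_{\tilde X}(\partial_{y^j})=-\sum_i(\partial a^i/\partial y^j)\,\partial_{x^i}$; thus $\Phi_{\tilde X}$ vanishes on an open set $W$ if and only if the coefficients $a^i=\mathrm d\pi(\tilde X)$ are independent of the vertical coordinates on $W$, that is, if and only if $\tilde X$ is projectable on $W$.

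Next I would run the unique continuation argument. By hypothesis $\tilde X$ is projectable on the nonempty open set $V$, so $\Phi_{\tilde X}\equiv 0$ on $V$. Consider $S=\{p\in U:\Phi_{\tilde X}\text{ vanishes on a neighbourhood of }p\}$. It is open by definition and nonempty since $V\subset S$. It is also closed in $U$: if $p\in\overline{S}$, choose a connected analytic chart $\Omega\ni p$; the components of $\Phi_{\tilde X}$ are real-analytic functions on $\Omega$ vanishing on the nonempty open set $S\cap\Omega$, so by the identity theorem for real-analytic functions they vanish on all of $\Omega$, whence $p\in S$. As $U$ is connected, $S=U$, i.e. $\Phi_{\tilde X}\equiv 0$ on $U$, and by the chart characterization $\tilde X$ is projectable on the entire $U$.

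The routine points are the verification that $\Phi_{\tilde X}$ is tensorial and analytic and the coordinate formula; the conceptual core is merely that projectability is encoded by the vanishing of one analytic tensor, after which the statement is an instance of real-analytic unique continuation, fully in the spirit of the present paper. The one point requiring care is the precise reading of ``projectable'': the tensor $\Phi_{\tilde X}$ detects the \emph{infinitesimal} condition that $\mathrm d\pi(\tilde X)$ be locally constant along fibres, which is exactly what is needed here; if one further wants a single globally defined projected field on $\pi(U)$ one must also use connectedness of the fibres within $U$, but this is irrelevant to the propagation of the property from $V$ to $U$.
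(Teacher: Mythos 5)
Your proof is correct and follows essentially the same route as the paper's: both characterize projectability by the vanishing of the real-analytic quantity $\pi_*\big([\tilde X,Y]\big)$ for vertical $Y$ (the paper packages this as showing the sheaf of projectable fields is analytically determined and then invokes its Lemma on analytic extensions, which is exactly your identity-theorem propagation). Your explicit verification of tensoriality and the open--closed argument on the connected set $U$ are welcome details that the paper leaves implicit.
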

\begin{proof}
 Our aim is to show that the sheaf of projectable fields on $C$ is analytically determined, and so, the result follows from Lemma \ref{thm:anextF}. For this, recall that  $\tilde{X}$ is projectable if and only if $\pi_*([\tilde{X},V])=0$ for every vertical vector field $V$. Then, for any open set $U$ admitting analytic adapted coordinates $(x^1,\dots,x^n,y^1,\dots,y^m,U)$ for $C$, where $\{\partial_{y^i}\}$ is a basis of the vertical space, consider the collection $\mathcal{A}(U)$  of functions $\{F_i:i=1,\dots,m\}$ defined by
\[
F^{\tilde{X}}_i=\sum_{j=1}^n g(\pi_*([\tilde{X},\partial_{y^i}]),\partial_{x^j})^2,
\]
where $g$ is an auxiliary Riemannian metric on $M$.
%
%
\end{proof}
\begin{thm}\label{thm:FPanalyticregular}
Let $M^n$ be a connected real-analytic manifold, $G\subset\mathrm{GL}(n)$ a Lie subgroup, and $\mathcal P\subset\mathrm{FR}(TM)$ a real-analytic $G$-structure on $M$. Then, the sheaf $\mathcal F^\mathcal P$ is regular.
\end{thm}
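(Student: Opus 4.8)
The plan is to reduce the regularity of the sheaf $\mathcal F^\mathcal P$ to an application of Proposition~\ref{thm:ADimpliesreg}. By Proposition~\ref{thm:Gstructsheafboundedrank} and Corollary~\ref{thm:fincontpropfinorderGstruct}, the sheaf $\mathcal F^\mathcal P$ is admissible, and by Proposition~\ref{thm:analyticallydetsheaf} it is analytically determined. Hence it suffices to verify the hypothesis of Proposition~\ref{thm:ADimpliesreg}: that every point $p\in M$ admits a neighborhood $\mathcal V_p$ such that for each $q\in\mathcal V_p$ and each $\mathcal F$-germ $\mathfrak g\in\mathfrak G_q^\mathcal F$, there is a \emph{real-analytic} field $X\in\mathfrak X^\omega(\mathcal V_p)$ with $\germ_q(X)=\mathfrak g$. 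Once this is established, Proposition~\ref{thm:ADimpliesreg} gives regularity directly, since $M$ is connected.

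The strategy for producing the real-analytic extension is to pass to the frame bundle and reduce to a global parallelism, where the explicit integration of Remark~\ref{thm:rem1fielddetermined} is available. First I would invoke the prolongation construction: a $G$-field $X$ lifts to a field $\widetilde X$ on $\mathcal P$ which is a $G^{(1)}$-field for the prolongation $\mathcal P^{(1)}$, and iterating this $N$ times (where $N$ is the order of the structure) lands in a $\{1\}$-structure, i.e.\ a global parallelism on the top prolongation, carried by a real-analytic frame $(X_1,\dots,X_m)$. At the bottom level, a field preserving a real-analytic parallelism is given on a neighborhood by the explicit real-analytic formula of Remark~\ref{thm:rem1fielddetermined}, namely $X(z)=\frac{\partial f}{\partial x}\big((f^q)^{-1}(z),q\big)X(q)$, with $f$ the flow map \eqref{eq:localdiffeo1struct}; this manufactures a real-analytic extension on a fixed neighborhood, uniform in the base point $q$ because $f$ is jointly real-analytic in $(t,x)$.

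The remaining work is to push this real-analytic extension back down through the tower of projections $\mathcal P^{(N)}\to\mathcal P^{(N-1)}\to\dots\to\mathcal P\to M$, and this is where Lemma~\ref{lemmaprojectable} does the essential job: a real-analytic field on the top space which is projectable on an open subset is projectable on the whole connected domain, so the extension obtained downstairs, which agrees near $q$ with the projection of the lifted germ, remains projectable throughout $\mathcal V_p$ and hence descends to a genuine real-analytic field on $\mathcal V_p$. Applying this projectability argument at each stage of the tower, and using real-analyticity of the bundle projections $\mathrm d\pi$ as in the proof of Proposition~\ref{thm:realanalyticsheaf}, yields a real-analytic $X$ on a neighborhood of $q$ with the prescribed germ.

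The main obstacle I expect is the \emph{uniformity} of the neighborhood $\mathcal V_p$: Proposition~\ref{thm:ADimpliesreg} requires a single $\mathcal V_p$ that works simultaneously for all $q\in\mathcal V_p$ and all germs at those points, whereas the naive extension procedure produces, for each $q$, only some small neighborhood depending on $q$. Overcoming this requires observing that the flow map $f$ in \eqref{eq:localdiffeo1struct} is defined and real-analytic on a common domain of the form (neighborhood of $0$ in $\mathds R^m$)$\,\times\,\mathcal W$ for a fixed neighborhood $\mathcal W$, so that $\mathcal V_p$ can be chosen once and for all; and at each prolongation level one must check that the projectability conclusion of Lemma~\ref{lemmaprojectable} holds on the fixed $\mathcal V_p$ rather than on a shrinking family, which is exactly why that lemma is phrased on an arbitrary connected $U$. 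Controlling these domains coherently across all $N$ levels of the prolongation tower is the delicate point of the argument.
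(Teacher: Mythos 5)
Your proposal is correct and follows essentially the same route as the paper: reduce to Proposition~\ref{thm:ADimpliesreg} via analytic determinacy (Proposition~\ref{thm:analyticallydetsheaf}), handle the base case of a $\{1\}$-structure by the explicit flow formula of Remark~\ref{thm:rem1fielddetermined} on a uniformly chosen neighborhood, and descend through the prolongation tower using Lemma~\ref{lemmaprojectable} — the paper merely packages the descent as an induction on the order rather than an unrolled tower. You also correctly single out the uniformity of $\mathcal V_p$ as the delicate point, which the paper resolves exactly as you suggest, via the joint real-analyticity of the map $\tilde f(t,x)=(f(t,x),x)$.
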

\begin{proof}
By Proposition~\ref{thm:analyticallydetsheaf}, $\mathcal F^\mathcal P$ is analytically determined. We will show that $\mathcal F^\mathcal P$ satisfies
the assumptions of Proposition~\ref{thm:ADimpliesreg}. Using induction on the order of the $G$-structure, it is easy to see that it suffices to consider the case of a $\{1\}$-structure. For the induction step, let $x\in M$ be fixed, choose $p\in\pi^{-1}(x)\in\mathcal P$, and let $\mathcal P_0$ be the connected component of $\mathcal P$ that contains $p$.
Then, $\mathcal P_0$ has a real-analytic $G^{(1)}$-structure $\mathcal P^{(1)}_0$, which by induction hypothesis satisfies the assumptions of Proposition~\ref{thm:ADimpliesreg}. Denote by $\mathcal U^{(1)}_p$ an open neighborhood that satisfies the real-analytic extension property, as described in Proposition~\ref{thm:ADimpliesreg}, for the sheaf $\mathcal F^{\mathcal P_0^{(1)}}$ on $\mathcal P_0$. Then,
the projection $\mathcal U_x=\pi\big(\mathcal U^{(1)}_p\big)$ is an open neighborhood of $x$, and it satisfies the same real-analytic extension property for the sheaf $\mathcal F^\mathcal P$ on $M$. In fact, given an analytic $G$-field $X$ defined in $\mathcal V\subset \mathcal U_x$, take $\tilde{X}$ the lifting of $X$ on an open subset $\mathcal V^1\subset \mathcal U_p^1$. From construction, $\tilde{X}$ is extensible analytically on the entire $U_p^1$ and, from previous lemma, the extended field is projectable on $\mathcal U_x$.

\smallskip

Let us prove the result for $\{1\}$-structures.
Let  $(X_1,\ldots,X_{n})$ be a real-analytic global frame of $M$.
Consider the map $f:\mathds R^n\times M\to M$ defined in \eqref{eq:localdiffeo1struct}; for $x\in M$, set $f^x(t_1,\ldots,t_n)=f(t_1,\ldots,t_n,x)$. Observe that as the map $\tilde{f}:\R^n\times M\to M\times M$ defined as $\tilde{f}(t_1,\ldots,t_n,x)=(f(t_1,\ldots,t_n,x),x)$ is a local diffeomorphism around $(0,x)$, for every $p\in M$, we can choose a (connected) neighborhood ${\mathcal V}_p$ of $p$ such that
the map \[(t_1,\ldots,t_n)\longmapsto f^q(t_1,\ldots,t_n)\] gives a real-analytic diffeomorphism from a neighborhood $\mathcal W_{q}$  of $0$ in $\mathds R^n$ to ${\mathcal V}_p$.

 Let $q\in\mathcal V_p$ be fixed, and let $X$ be any local field, defined in some connected neighborhood $\mathcal O\subset\mathcal V_p$ of $q$, whose flow preserves the frame $(X_1,\ldots,X_n)$. Set $v=X(q)$. Then, the field $\widetilde X$ defined by:
\[\widetilde X(z)=\frac{\partial f}{\partial x}\big((f^q)^{-1}(z),q\big)v,\] for $z\in {\mathcal V}_p$, is a real-analytic extension of $X$ to $\mathcal V_p$, see Remark~\ref{thm:rem1fielddetermined}. This shows that when $\mathcal P$ is a real-analytic $\{1\}$-structure, then $\mathcal F^\mathcal P$ satisfies the assumptions of Proposition~\ref{thm:ADimpliesreg}.
\end{proof}
\subsection{The transitive case}
Given a sheaf of vector fields $\mathcal F$ on the manifold $M$, let $\mathcal D(\mathcal F)$ denote the associated pseudo-group of local diffeomorphisms of $M$. Recall that this is the pseudo-group generated by the family of all local diffeomorphisms given by the local flow of elements of $\mathcal F$.
\begin{defi}\label{thm:D(F)transitive}
The sheaf $\mathcal F$ is said to be \emph{full} if for every $X\in\mathcal F$ and any $\varphi\in\mathcal D(\mathcal F)$, the pull-back $\varphi^*(X)$ belongs to $\mathcal F$. Given a complete sheaf $\mathcal F$,
the pseudo-group $\mathcal D(\mathcal F)$  is said to be \emph{transitive} if it acts transitively on $M$, i.e., if given any two points $x,y\in M$, there exists $\varphi\in\mathcal D(\mathcal F)$ such that $\varphi(x)=y$.
\end{defi}
As an immediate consequence of the definition, we have the following:
\begin{prop}\label{thm:transitiveimpliesregular}
Let $\mathcal F$ be a full sheaf of vector fields on $M$ which has bounded rank. If $\mathcal D(\mathcal F)$ is transitive, then $\mathcal F$ is regular.
\end{prop}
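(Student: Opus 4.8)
The plan is to prove regularity by showing that the function $p\mapsto\kappa^\mathcal F_p$ is constant on $M$; by Definition~\ref{thm:defregular} this is exactly what it means for $\mathcal F$ to be regular. The conceptual point is that transitivity together with fullness forces the local picture of $\mathcal F$ to look identical at every point: a local diffeomorphism carrying $p$ to $q$ should carry $\mathcal F$-fields near $q$ to $\mathcal F$-fields near $p$ by pull-back, hence identify the corresponding local spaces of sections. So it suffices to fix two arbitrary points $p,q\in M$ and prove $\kappa^\mathcal F_p=\kappa^\mathcal F_q$.

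To this end, I would use transitivity of $\mathcal D(\mathcal F)$ to select $\varphi\in\mathcal D(\mathcal F)$ with $\varphi(p)=q$; being an element of the pseudo-group, $\varphi$ is a diffeomorphism from an open neighborhood $W$ of $p$ onto an open neighborhood $W'=\varphi(W)$ of $q$, and its inverse $\varphi^{-1}$ again lies in $\mathcal D(\mathcal F)$. Next I would choose a connected open neighborhood $U\subseteq W'$ of $q$ realizing the maximum in \eqref{eq:defkappaFp}, i.e.\ with $\dim\big(\mathcal F(U)\big)=\kappa^\mathcal F_q$; such a $U$ exists because bounded rank makes the supremum a maximum over a bounded set of integers, and by the monotonicity \eqref{eq:monotonicity} it may be taken inside $W'$. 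The key step is then to consider the pull-back operator $\varphi^*\colon\mathcal F(U)\to\mathfrak X\big(\varphi^{-1}(U)\big)$, $\varphi^*(X)=(\mathrm d\varphi)^{-1}\circ X\circ\varphi$. Fullness guarantees that $\varphi^*(X)\in\mathcal F\big(\varphi^{-1}(U)\big)$ for every $X\in\mathcal F(U)$, so $\varphi^*$ maps $\mathcal F(U)$ linearly into $\mathcal F\big(\varphi^{-1}(U)\big)$; since $\varphi$ is a diffeomorphism, $\varphi^*$ is injective. As $\varphi^{-1}(U)$ is a connected open neighborhood of $p$, this yields $\kappa^\mathcal F_p\ge\dim\big(\mathcal F(\varphi^{-1}(U))\big)\ge\dim\big(\mathcal F(U)\big)=\kappa^\mathcal F_q$. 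Running the same argument with the roles of $p$ and $q$ interchanged (and $\varphi$ replaced by $\varphi^{-1}\in\mathcal D(\mathcal F)$) gives the reverse inequality, whence $\kappa^\mathcal F_p=\kappa^\mathcal F_q$.

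Since $p$ and $q$ were arbitrary, $\kappa^\mathcal F$ is constant and $\mathcal F$ is regular. I expect no serious obstacle here: the only points requiring care are the bookkeeping of domains (making sure $U$ is taken small enough to sit inside the image $W'$, so that $\varphi^{-1}(U)$ is a genuine neighborhood of $p$) and the elementary observation that pull-back by a diffeomorphism is injective on vector fields. Equivalently, one can phrase the whole argument at the level of stalks: $\varphi^*$ descends to a well-defined linear isomorphism $\mathfrak G^\mathcal F_q\to\mathfrak G^\mathcal F_p$ with inverse $(\varphi^{-1})^*$, directly identifying the two germ spaces; the dimension-counting version above is slightly more economical in that it does not even need to invoke the unique continuation property.
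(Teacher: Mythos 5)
Your argument is correct and is essentially the paper's own proof: the paper's one-line argument is exactly the stalk-level version you mention at the end, namely that $\varphi^*$ induces an isomorphism $\mathfrak G^\mathcal F_q\to\mathfrak G^\mathcal F_p$, and your dimension-counting version just spells out the same pull-back mechanism in detail. (Only a minor quibble: your appeal to the monotonicity \eqref{eq:monotonicity} to shrink $U$ into $W'$ does implicitly use the unique continuation property, so the claim that this version avoids it is not quite accurate.)
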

\begin{proof}
Given $x,y\in M$, and $\varphi\in\mathcal D(\mathcal F)$ such that $\varphi(x)=y$, then the pull-back $\varphi^*$ induces an isomorphism between the space of germs $\mathfrak G_y^\mathcal F$ and $\mathfrak G_x^\mathcal F$.
\end{proof}
An interesting consequence of fullness is the following result:
\begin{prop}\label{thm:Liealgebrastr}
Let $\mathcal F$ be a full sheaf of smooth local vector fields on a manifold $M$. Assume that $\mathcal F$ is closed by $C^\infty$-convergence on compact sets. Then, $\mathcal F$ is closed by Lie brackets, i.e., $\mathcal F(U)$ is a Lie algebra for every open set $U\subset M$. This is the case, in particular, for the sheaf of vector fields whose flow preserves a $G$-structure on $M$.
\end{prop}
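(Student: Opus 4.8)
The plan is to realize the Lie bracket as a $C^\infty$-limit of difference quotients built from the flow of one of the fields, and then to invoke fullness together with closedness under $C^\infty$-convergence. Since the bracket is automatically $\mathds R$-bilinear, antisymmetric and satisfies the Jacobi identity, and since $\mathcal F(U)$ is by hypothesis a vector space, it suffices to prove that $\mathcal F(U)$ is closed under $[\cdot,\cdot]$. Moreover, as the conclusion is local and $\mathcal F$ is a sheaf, it is enough to show that every point of $U$ has a neighborhood $W$ on which $[X,Y]$ restricts to an element of $\mathcal F(W)$; the gluing axiom of the sheaf then yields $[X,Y]\in\mathcal F(U)$.

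So I would fix $X,Y\in\mathcal F(U)$ and $p\in U$, and first choose a neighborhood $W\ni p$ and an $\varepsilon>0$ so small that the flow $\phi_t$ of $X$ is defined on $W$ with $\phi_t(W)\subset U$ for all $|t|<\varepsilon$. Each $\phi_t$ is a local diffeomorphism generated by the flow of $X\in\mathcal F$, hence $\phi_t\in\mathcal D(\mathcal F)$; by fullness, the pullback $\phi_t^{*}(Y)$, which is defined on $W$ because $Y$ is defined on $\phi_t(W)$, belongs to $\mathcal F(W)$. Since $\mathcal F(W)$ is a vector space, the difference quotient $Z_t:=\tfrac1t\bigl(\phi_t^{*}(Y)-Y\bigr)$ lies in $\mathcal F(W)$ for every $0<|t|<\varepsilon$. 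The key classical fact I would then use is that, as $t\to0$,
\[
Z_t=\tfrac1t\bigl(\phi_t^{*}(Y)-Y\bigr)\longrightarrow \mathcal L_XY=[X,Y]
\]
in the $C^\infty$ topology on compact subsets of $W$. Applying the hypothesis of closedness under $C^\infty$-convergence to the sequence $Z_{1/n}$ then gives $[X,Y]\big\vert_W\in\mathcal F(W)$, completing the local step.

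The main technical point to be checked carefully is precisely this $C^\infty$-convergence of the difference quotients, not merely their pointwise or $C^0$ convergence. Writing $F(t,x)=\bigl(\phi_t^{*}(Y)\bigr)(x)$, which is jointly smooth on a neighborhood of $\{0\}\times K$ for each compact $K\subset W$, one has the integral representation $\tfrac1t\bigl(F(t,\cdot)-F(0,\cdot)\bigr)=\int_0^1\partial_tF(st,\cdot)\,\mathrm ds$; differentiating under the integral sign, justified by joint smoothness, shows that $Z_t$ together with all its spatial derivatives converges uniformly on $K$ to $\partial_tF(0,\cdot)=[X,Y]$ as $t\to0$. Beyond this, the only bookkeeping is to keep track of the domains of the locally defined flow and pullback, which is handled by the choice of $W$ and $\varepsilon$ above.

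Finally, for the stated example I would verify the two hypotheses for the sheaf $\mathcal F^{\mathcal P}$ of vector fields whose flow preserves a $G$-structure $\mathcal P$. Fullness holds because any $\varphi\in\mathcal D(\mathcal F^{\mathcal P})$ is a composition of flows of $G$-fields, hence preserves $\mathcal P$; consequently the flow $\varphi^{-1}\circ\phi_t^{X}\circ\varphi$ of $\varphi^{*}(X)$ preserves $\mathcal P$, so $\varphi^{*}(X)\in\mathcal F^{\mathcal P}$. Closedness under $C^\infty$-convergence follows from the differential characterization of Proposition~\ref{thm:charGfields}: fixing a compatible connection $\nabla$ with torsion $T$, a field $K$ is a $G$-field if and only if $\nabla K(x)+T(K,\cdot)\in\mathfrak g_x$ for all $x$. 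This condition involves only $K$ and its first derivatives and requires membership in the closed linear subspace $\mathfrak g_x\subset\mathrm{gl}(T_xM)$; hence it passes to $C^1$ (a fortiori $C^\infty$) limits, so a $C^\infty$-limit of $G$-fields is again a $G$-field. With both hypotheses in place, the general statement applies.
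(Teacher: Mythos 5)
Your proof is correct, and it is essentially the argument the paper delegates to the literature: the paper's entire proof is the citation ``See \cite[Remark (b), p.~10]{SinSte65}'', and your realization of $[X,Y]$ as the $C^\infty_{\mathrm{loc}}$-limit of the difference quotients $\tfrac1t\bigl(\phi_t^{*}(Y)-Y\bigr)$, combined with fullness and closedness under convergence, is precisely the standard argument behind that reference. Your verification of the two hypotheses for the sheaf $\mathcal F^{\mathcal P}$ (fullness from composition of $\mathcal P$-preserving flows, and closedness under $C^1$-limits via the pointwise closed linear condition of Proposition~\ref{thm:charGfields}) is also sound and supplies detail the paper omits.
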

\begin{proof}
See \cite[Remark (b), p.\ 10]{SinSte65}.
\end{proof}
The sheaf $\mathcal F$ is called \emph{transitive} if the evaluation map $\mathrm{ev}_x:\mathfrak G_x^\mathcal F\to T_xM$ is surjective for all $x$. In other words, $\mathcal F$ is transitive if for all $x\in M$ and all $v\in T_xM$, there exists a local $\mathcal F$-field $K$ defined around $x$ such that $K_x=v$.
When $\mathcal F$ is a Lie algebra sheaf, the transitivity of  $\mathcal F$ implies the transitivity of $\mathcal D(\mathcal F)$.
\begin{prop}\label{thm:transitivities}
If $\mathcal F$ is a full Lie algebra sheaf of local fields on $M$ which is transitive, then $\mathcal D(\mathcal F)$ is transitive.
\end{prop}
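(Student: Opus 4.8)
The plan is to realise $\mathcal D(\mathcal F)$-transitivity as connectedness of a single orbit. For $x,y\in M$ declare $x\sim y$ when there is $\varphi\in\mathcal D(\mathcal F)$ with $\varphi(x)=y$; since $\mathcal D(\mathcal F)$ is a pseudo-group (it contains the germ of the identity and is closed under composition and inversion), $\sim$ is an equivalence relation whose classes are the orbits $\mathcal O_x$. Transitivity of $\mathcal D(\mathcal F)$ is precisely the assertion that $M$ consists of a single such class, so it suffices to show the orbits form an open (hence also closed) partition and then invoke connectedness of $M$.

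The heart of the matter is to prove that each orbit $\mathcal O_x$ is open, and this is exactly where the transitivity hypothesis on $\mathcal F$ is used. Fix $x\in M$ and a basis $v_1,\dots,v_n$ of $T_xM$. Surjectivity of $\mathrm{ev}_x$ lets me choose local $\mathcal F$-fields $K_1,\dots,K_n$, all defined on a common connected neighborhood of $x$, with $K_i(x)=v_i$. Writing $F^{K_i}_t$ for the local flow of $K_i$, I set
\[
\Phi(t_1,\dots,t_n)=\big(F^{K_1}_{t_1}\circ\cdots\circ F^{K_n}_{t_n}\big)(x),
\]
which is smooth for $(t_1,\dots,t_n)$ in a neighborhood of $0$ in $\R^n$.

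Next I would compute the differential of $\Phi$ at the origin. Setting $t_j=0$ for $j\neq i$ collapses the composition to $F^{K_i}_{t_i}$, so $\partial_{t_i}\Phi\big\vert_{0}=K_i(x)=v_i$; hence $\mathrm d\Phi_0$ sends the canonical basis of $\R^n$ to $v_1,\dots,v_n$ and is an isomorphism. By the inverse function theorem $\Phi$ restricts to a diffeomorphism from a neighborhood of $0$ onto a neighborhood $W$ of $x$. Each point $\Phi(t)=\psi(x)$ with $\psi=F^{K_1}_{t_1}\circ\cdots\circ F^{K_n}_{t_n}\in\mathcal D(\mathcal F)$ lies in $\mathcal O_x$, so $W\subseteq\mathcal O_x$ and $\mathcal O_x$ is open. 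Distinct orbits being disjoint, the complement of any orbit is a union of open orbits and hence open, so every orbit is also closed; connectedness of $M$ then forces a single orbit, i.e.\ $\mathcal D(\mathcal F)$ is transitive.

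The only genuinely delicate point is the domain bookkeeping: one must ensure that for small $t$ the successive flows in $\Phi$ are all defined and keep the point near $x$, which is automatic once the $K_i$ are taken on one common neighborhood. I would also remark that this implication really uses only the transitivity of $\mathcal F$ (surjectivity of each $\mathrm{ev}_x$) together with connectedness of $M$; the full Lie-algebra structure is not needed here, and serves only to place the statement within the standing framework in which $\mathcal D(\mathcal F)$-transitivity is discussed.
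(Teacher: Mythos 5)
Your argument is correct, and it is genuinely different from what the paper does: the paper offers no proof at all for this proposition, simply citing Singer--Sternberg \cite[Proposition~1.1 and \S~1.3]{SinSte65}, whereas you give a self-contained, elementary flow-box argument. The orbit decomposition, the local diffeomorphism $\Phi(t_1,\dots,t_n)=\bigl(F^{K_1}_{t_1}\circ\cdots\circ F^{K_n}_{t_n}\bigr)(x)$ with $\mathrm d\Phi_0$ sending the canonical basis to $v_1,\dots,v_n$, and the open--closed argument on a connected $M$ are all sound; and since $\mathcal D(\mathcal F)$ is by definition the pseudo-group \emph{generated} by the local flows of $\mathcal F$-fields, it is automatically closed under composition and inversion, so $\sim$ is an equivalence relation without invoking fullness. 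Your side remark is also accurate: this implication uses only surjectivity of $\mathrm{ev}_x$ at every point together with connectedness of $M$ (which the statement implicitly requires, as flows preserve connected components); the full Lie-algebra hypothesis is part of the Singer--Sternberg framework rather than a logical necessity here. Two small points of bookkeeping: to conclude that \emph{every} orbit is open (so that the complement of an orbit is a union of open orbits), the local argument must be run at each point $y$ of an orbit, not only at the chosen basepoint $x$ --- this is immediate since $\mathrm{ev}_y$ is surjective for all $y$, but it deserves a sentence; and the common domain for $K_1,\dots,K_n$ is obtained by intersecting finitely many neighborhoods, exactly as you say. What your route buys is independence from the external reference and transparency about which hypotheses are actually used; what the citation buys the authors is brevity and placement of the result in the general theory of transitive Lie algebra sheaves.
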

\begin{proof}
See \cite[Proposition~1.1 and \S~1.3]{SinSte65}.
\end{proof}
Proposition~\ref{thm:transitivities} applies in particular to the sheaf of local vector fields that are infinitesimal symmetries of a $G$-structure, see Section~\ref{sub:Gstructuresinfsymmetries}.
\section{Killing and conformal fields for Finsler manifolds}
\label{sec:KillingFinsler}
Let $(M,F)$ be a Finsler manifold with $F:TM \rightarrow [0,+\infty)$, namely, a continuous positive homogeneous function which is smooth away from the zero section and such that the fundamental tensor defined as
\[g_v(u,w)=\frac{1}{2}\frac{\partial^2}{\partial t\partial s}\Big|_{t=s=0}F(v+tu+sw)^2\]
for every $v\in TM\setminus 0$ and $u,w\in T_{\pi(v)}M$, is positive definite. Let us recall that the Chern connection associated with a Finsler metric can be seen as a family of affine connections, namely, for every non-null vector field $V$ on an arbitrary open subset $U\subset M$, we can define an affine connection $\nabla^V$ in $U$, which is torsion-free and almost $g$-compatible (see for example \cite{Jav13}). The Chern connection also determines a covariant derivative along any curve $\gamma:[a,b]\rightarrow M$, which depends on a non-null reference vector $W$ and which we will denote as $D^W_\gamma$.
Let us define a local Killing (resp. conformal) vector field of $(M,F)$  as a vector field $K$ such that the flow of $K$ gives local isometries (resp. conformal maps) for every $t\in\R$ where it is defined. Furthermore, following \cite{JaLiPi11} we say that a map $\varphi:(M,F)\rightarrow (M,F)$ is an almost isometry if there exists $f:M\rightarrow \R$ such that $\varphi^*(F)=F-df$. This kind of maps allows one to characterize the conformal maps of a stationary spacetime in terms of its Fermat metric as shown in \cite{JaLiPi11}. We will say that a vector field is {\it almost Killing} if its flow gives local almost isometries.
 Let us begin by giving a characterization of (local) Killing, almost Killing and conformal vector fields.

\begin{prop}\label{thm:diffeqnFinslerKilling}
Let $(M,F)$ be a Finsler manifold and $K$ a vector field in an open subset $U\subset M$.
\begin{itemize}
\item[(i)] $K$ is Killing if and only if $g_v(v,\nabla^v_vK)=0$ for every $v\in \pi^{-1}(U) \setminus 0$, where $\pi:TM\rightarrow M$ is the canonical projection.\smallskip
\item[(ii)] $K$ is conformal if and only if there exists a function $f:U\rightarrow (0,+\infty)$ such that $g_v(v,\nabla^v_vK)=f(\pi(v)) F(v)^2$ for every $v\in \pi^{-1}(U) \setminus 0$.
\item[(iii)]  $K$ is almost Killing if and only if there exists a one-form $\xi$ such that $g_v(v,\nabla^v_vK)=F(v)\xi(v)$ for every $v\in \pi^{-1}(U) \setminus 0$ and the one-form $\tilde{\xi}_s$, defined as
\begin{equation}\label{intone-form}
\tilde{\xi}_s(v)=\int_0^s \xi(\varphi^*_{\mu}(v))d\mu,
\end{equation}
 where $\varphi_{\mu}$ is the flow of $K$, is closed for every $s\in \R$ such that the flow of $K$ is defined in $[0,s]$.
\end{itemize}
\end{prop}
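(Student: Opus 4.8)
The plan is to reduce all three characterizations to a single first-variation formula for $F^2$ along the flow of $K$, and then read off each case from the behavior of that variation. Fix a non-null $v\in\pi^{-1}(U)\setminus 0$ with $x=\pi(v)$, let $\varphi_t$ be the flow of $K$, and let $\gamma(t)=\varphi_t(x)$. Put $v_t:=\mathrm d\varphi_t(v)\in T_{\gamma(t)}M$, a non-null field along $\gamma$ with $v_0=v$ and $F(v_t)=(\varphi_t^*F)(v)$. The core claim, which I will call the first-variation identity, is that for every $t$ where the flow is defined
\[
\frac{\mathrm d}{\mathrm dt}F(v_t)^2 = 2\,g_{v_t}\big(v_t,\nabla^{v_t}_{v_t}K\big),
\]
where $\nabla^{v_t}$ is the Chern connection with reference vector $v_t$ and $\nabla^{v_t}_{v_t}K$ is evaluated at $\gamma(t)$.

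To prove the identity I would realize $v_t$ as a transversal field of the variation $\Phi(t,s)=\varphi_t(\sigma(s))$, where $\sigma$ is any curve with $\sigma(0)=x$ and $\dot\sigma(0)=v$; then $\partial_s\Phi|_{s=0}=v_t$ and $\partial_t\Phi=K\circ\Phi$. Since $g_w(w,w)=F(w)^2$ by homogeneity, I differentiate $g_{v_t}(v_t,v_t)$ along $\gamma$. The contribution of the fiber-dependence of $g_{v_t}$ is a Cartan-tensor term evaluated on $(v_t,v_t,\cdot)$, which vanishes because the Cartan tensor is annihilated by its reference vector. Using next that $\nabla^{v_t}$ is almost $g$-compatible—so that the failure of compatibility is again a Cartan term killed upon contraction with $v_t$—the position-derivative of $g_{v_t}$ is absorbed into the Christoffel symbols, leaving
\[
\frac{\mathrm d}{\mathrm dt}F(v_t)^2 = 2\,g_{v_t}\!\left(\tfrac{\mathrm D^{v_t}}{\mathrm dt}v_t,\,v_t\right).
\]
Finally, because $\nabla^{v_t}$ is torsion-free, the mixed covariant derivatives of $\Phi$ commute, so at $s=0$ one gets $\tfrac{\mathrm D^{v_t}}{\mathrm dt}v_t=\nabla^{v_t}_{\partial_s\Phi}(\partial_t\Phi)=\nabla^{v_t}_{v_t}K$ (in coordinates this is exactly the symmetry $\Gamma^i_{kl}=\Gamma^i_{lk}$ of the reference Christoffel symbols), yielding the claimed identity.

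With the identity in hand the three statements follow. For (i), $K$ is Killing iff $F(v_t)\equiv F(v)$, which by the identity is equivalent to $g_w(w,\nabla^w_wK)=0$ for every non-null $w$: the forward direction differentiates at $t=0$, the converse integrates using the identity at each $v_t$. For (ii), $K$ is conformal iff $\varphi_t^*F=(\mathrm e^{\sigma_t}\circ\pi)\,F$ for functions $\sigma_t$ on $U$; writing $f:=\frac{\mathrm d}{\mathrm dt}\big|_{t=0}\sigma_t$ and differentiating gives $g_v(v,\nabla^v_vK)=f(\pi(v))F(v)^2$, and conversely this relation turns the identity into the linear ODE $\frac{\mathrm d}{\mathrm dt}F(v_t)^2=2\,f(\gamma(t))\,F(v_t)^2$, whose solution $F(v_t)=F(v)\exp\!\big(\int_0^t f(\gamma(\tau))\,\mathrm d\tau\big)$ has conformal factor independent of $v$. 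For (iii), define $\xi$ by $F(v)\,\xi(v)=g_v(v,\nabla^v_vK)$; the identity then reads $\frac{\mathrm d}{\mathrm ds}F(v_s)=\xi(v_s)$, and integration gives $(\varphi_s^*F)(v)-F(v)=\int_0^s\xi(v_\mu)\,\mathrm d\mu=\tilde\xi_s(v)$, with $v_\mu=\mathrm d\varphi_\mu(v)$ the vector the paper denotes $\varphi_\mu^*(v)$. Hence $K$ is almost Killing, i.e. $\varphi_s^*F=F-\mathrm df_s$ for some $f_s$, precisely when each $\tilde\xi_s$ is exact; since a local flow lives on a shrinkable neighborhood, exactness is equivalent to closedness by the Poincaré lemma, which is the stated condition.

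I expect the main obstacle to be the reference-vector bookkeeping in the first-variation identity: one must check that \emph{every} term in which the non-Riemannian features enter—the fiber-derivative of $g$ and the non-compatibility of the Chern connection—is a Cartan-type term contracted with $v_t$ in a slot where it is annihilated, and that the residual symmetry is exactly torsion-freeness. A secondary subtlety, specific to (iii), is that the very existence of the one-form $\xi$ (that is, the linearity in $v$ of $g_v(v,\nabla^v_vK)/F(v)$, which a priori is only positively homogeneous of degree one) is itself part of the characterization, and the converse requires passing from closedness to local exactness of $\tilde\xi_s$.
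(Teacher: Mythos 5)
Your proposal is correct and follows essentially the same route as the paper: both differentiate $F(v_t)^2$ along the two-parameter map obtained from the flow of $K$ and a curve with initial velocity $v$, using almost $g$-compatibility and torsion-freeness of the Chern connection (which you unpack via vanishing Cartan-tensor contractions, where the paper simply cites \cite[Proposition 3.2]{Jav13}), and then read off (i)--(iii) by differentiating and integrating that identity. The only differences are cosmetic: you make the exponential solution of the conformal ODE and the linearity-of-$\xi$ subtlety in (iii) explicit, which the paper leaves implicit.
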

\begin{proof}
Given $v\in \pi^{-1}(U)$, let $\gamma:[-1,1]\rightarrow U$ be a curve such that $\dot\gamma(0)=v$. Now let $\phi_s$ be the flow of $K$ in the instant $s\in \R$ for $s$ small enough in such a way that the flow is defined for that instant along $\gamma$. Define the two-parameter $\Lambda(t,s)=\phi_s(\gamma(t))$ and denote by $\beta_t(s)=\Lambda(t,s)=\gamma_s(t)$. By definition $\dot\beta_t(s)=K$. We have that
\begin{equation}\label{dFdois}
\frac{d}{ds} F(\dot\gamma_s(t))^2=2g_{\dot\gamma_s}(D^{\dot\gamma_s}_{\beta_t}\dot\gamma_s,\dot\gamma_s)=2g_{\dot\gamma_s}(D^{\dot\gamma_s}_{\gamma_s}K,\dot\gamma_s).
\end{equation}
Here we have used \cite[Proposition 3.2]{Jav13}.
Observe that $K$ is Killing if and only if
$F(\dot\gamma(t))=F(\dot\gamma_s(t))$ for every curve $\gamma$. By the above equation this is equivalent to $g_{v}(\nabla^{v}_{v}K,v)=0$ for every $v\in \pi^{-1}(U)$ and the proof of (i) follows. When $K$ is conformal, we have that $F(\dot\gamma_s(t))^2=h(s,\gamma(t))F(\dot\gamma(t))^2$ and then
\[ g_v(\nabla^v_vK,v)=f (\pi(v)) F(v)^2\]
where $f(p)=\frac 12\frac{\partial h}{\partial s}(0,p)$. Moreover, if $K$ satisfies an equation of this type for some function $f$, then if we define $h(s,p)= 2\int_0^s f(\phi_\mu(p)) d\mu$, we deduce that the flow $\phi_s$ is conformal with
$F(\phi^*_s(v))^2=h(s,\pi(v)) F(v)^2$. This concludes the proof of part (ii). For the last part, observe that if $K$ is an almost Killing field, then for every $s$ such that the flow of $K$, $\varphi_s$, is (locally) defined, there exists a function $f_s:M\rightarrow \R$ such that $F(\dot\gamma_s(t))=F(\dot\gamma(t))+d f_s(\dot\gamma(t))$. Deriving both sides of last equality with respect to $s$, using \eqref{dFdois} and evaluating in $s=0$ we get
\[g_v(v,\nabla^v_vK)=F(v)\xi(v)\]
where $\xi(v)=\frac{\partial}{\partial s} df_s(v)|_{s=0}$. Furthermore, taking $v=\dot\gamma_s(t)$ as above, we have
\[\frac{1}{F(\dot\gamma_s(t))}g_{\dot\gamma_s}(D^{\dot\gamma_s(t)}_{\gamma_s}K,\dot\gamma_s(t))=\xi(\dot\gamma_s(t)).\]
Integrating with respect to $s$ we get that
\[F(\dot\gamma_s(t))-F(\dot\gamma(t))=\int_0^s \xi(\dot\gamma_{\mu}(t))d\mu\]
or equivalently $F(\varphi_{s}^*(v))-F(v)=\int_0^s \xi(\varphi_{\mu}^*(v))d\mu=\tilde{\xi}_s(v)$. This implies that  $\tilde{\xi}_s$ is closed because $\varphi_s$ is an almost isometry. The converse follows easily from the above equations.

\end{proof}

\begin{rem}\label{rem:analiticadeterminadofinsler}
When $(M,F)$ has analytic regularity, the previous proposition ensures that  Killing, almost Killing and conformal fields are analytically determined. In fact, the Killing case follows as in Example \ref{exa:Killinganaldet}. The conformal case however is a little trickier as it requires the definition of the function $f$. For this, let us consider $U$ a small enough open set (recall Remark \ref{rem:refinaranalyticdeter}) so that there exists an analytic vector field  $V_0\in \mathfrak X^\omega(U)$ without zeroes. Then, define the analytic function
\[f(\pi(V_0))=\frac{g_{V_0}(V_0,\nabla_{V_0}^{V_0}K)}{F(V_0)^2}\]
and define the collection $\mathcal{A}(U)$ as the set of functions $\{H_V:V\in\mathfrak X^\omega(U)\}$ defined by
\[
H^X_V=g_{V}(V,\nabla_V^V X)-f(\pi(V))F(V)^2
\]
It follows that $K$ is conformal if, and only if, $H^K\equiv 0$ for all $H\in\mathcal{A}(U)$. Finally, in the case of almost isometries we can proceed in a similar way. Choose a small enough open set $U$ in such a way that it admits an analytic frame $(V_1,V_2,\ldots,V_n)$. Then given a vector field $X$ in $U$ define the one-form $\xi$ in $U$ such that $\xi(V_i)=\frac{1}{F(V_i)} g_{V_i}(V_i,\nabla_{V_i}^{V_i}X,V_i)$ for $i=1,\ldots,n$. Moreover, define $\tilde{\xi}_s$ as in \eqref{intone-form} using the flow of $X$. Then given $V,W \in\mathfrak X^\omega(U)$ and  $s\in\R$ smal enough in such a way that $\tilde{\xi}_s$ is defined in $U$,  choose the collection $\mathcal{A}(U)$ as the set of functions
\[
H^X_{V,W,s}=(g_{V}(V,\nabla_V^V X)-F(V)\xi(V))^2+(d\tilde{\xi}_s(V,W))^2.
\]

\end{rem}

In order to see that local Killing and conformal vector fields of Finsler manifolds constitute and admissible sheaf, we need to introduce an average Riemannian metric associated with a Finsler metric. There are several notions of  Riemannian metric associated with a Finsler structure (see \cite{ricardo,MRTZ09}); we will consider the following one.

For $p\in M$, let us set $B_p=\big\{v\in T_pM: F(v)\leq 1\big\}$, $\mathcal S_p=\partial B_p$, and let $\Omega_p$ be the unique $n$-form
on $T_pM$ such that $B_p$ has volume equal to $1$. Then we define the $(n-1)$-form $\omega_p$ on $\mathcal S_p$ at $u\in\mathcal S_p$ as
\[\omega_p(\eta_1,\eta_2,\ldots,\eta_{n-1})=\Omega_p(\eta_1,\eta_2,\ldots,\eta_{n-1},u)\]
for every $\eta_1,\eta_2,\ldots,\eta_{n-1}\in T_uS_p$. The average Riemannian metric $g_R$ at the point $p\in M$ is defined as
\begin{equation}\label{eq:average}
g_{R}(v,w):=\int_{u\in \mathcal{S}_p} g_{u}(v,w)\omega_p.
\end{equation}
The first observation about the average Riemannian metric $g_R$ is that if $\phi:U\subset M\rightarrow  V\subset M$ is a local isometry (resp., a local conformal map) of $(M,F)$, then it is also a local isometry (resp., a local conformal map) of $(M,g_R)$; in particular, if $K$ is a local Killing (resp., conformal) vector field of $(M,F)$, then it is also a local Killing (resp., conformal) vector field of $(M,g_R)$. This implies that the sheaf of local Killing (resp. conformal) vector fields of a Finsler manifold is admissible, so that Theorem \ref{thm:globalregular} applies whenever $M$ is simply-connected and the sheaf of local Killing (resp. conformal) vector fields is regular. This is the case, for example when the manifold $(M,F)$ is real-analytic. In the case
of almost Killing fields, we will need to use the symmetrization of a Finsler metric defined as $\hat{F}(v)=\frac{1}{2}(F(v)+F(-v))$. Recall that in \cite[Corollaries 4.3 and 4.6]{JaSa11}, it was proved that $\hat{F}$ is a (strongly convex) Finsler metric and in \cite{JaLiPi11} that an almost isometry of $F$ is an isometry of $\hat{F}$, which easily implies that almost Killing fields are admissible.
\begin{prop}\label{prop:analyFinsCase}
Let $(M,F)$ be a simply-connected real-analytic Finslerian manifold. Then, every local Killing (resp. almost Killing, conformal) vector field  can be extended to a unique Killing (resp. almost Killing, conformal) vector field defined on the whole manifold $M$.
\end{prop}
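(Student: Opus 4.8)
The plan is to deduce the statement from the abstract extension result, Theorem~\ref{thm:globalregular}, applied to the sheaf $\mathcal F$ of local Killing (resp.\ almost Killing, conformal) fields of $(M,F)$. Since $M$ is simply connected by hypothesis, it suffices to verify that $\mathcal F$ is admissible and regular. Admissibility has already been established in the discussion preceding the statement: via the average Riemannian metric $g_R$ of \eqref{eq:average} (resp.\ via the average metric of the symmetrized Finsler metric $\hat F$ in the almost Killing case), every local Finsler Killing/conformal/almost Killing field is a Killing, resp.\ conformal, field of an auxiliary \emph{real-analytic} Riemannian metric, so that $\mathcal F$ embeds as a sub-sheaf of an admissible sheaf and inherits admissibility through Remark~\ref{rem:admissiblesubsheaf}. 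Thus the whole content reduces to proving \emph{regularity}.

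To obtain regularity I would invoke the real-analytic criterion, Proposition~\ref{thm:ADimpliesreg}, whose two hypotheses are analytic determinacy and a uniform local analytic extension property. Analytic determinacy of $\mathcal F$ in each of the three cases is exactly the content of Remark~\ref{rem:analiticadeterminadofinsler}. For the extension property one must produce, around each $p\in M$, a neighborhood $\mathcal V_p$ such that \emph{every} germ $\mathfrak g\in\mathfrak G_q^{\mathcal F}$, for $q\in\mathcal V_p$, is the germ at $q$ of some analytic field in $\mathfrak X^\omega(\mathcal V_p)$. Here I would again exploit the auxiliary metric: since $g_R$ is real-analytic (being obtained by integrating over the indicatrix the analytic family $u\mapsto g_u$ against an analytically varying volume form), the sheaf of $g_R$-Killing, resp.\ conformal, fields is the sheaf of $G$-fields of the analytic $\mathrm O(n)$-, resp.\ $\mathrm{CO}(n)$-, structure determined by $g_R$, which is of finite type. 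By Theorem~\ref{thm:FPanalyticregular}, and more precisely by the local analytic extension property established in its proof, every germ of such a field extends to an analytic field defined on a whole fixed neighborhood $\mathcal V_p$. Because a local $\mathcal F$-field is in particular a $g_R$-Killing, resp.\ conformal, field, every $\mathcal F$-germ at $q$ arises in this way, furnishing the required $X\in\mathfrak X^\omega(\mathcal V_p)$.

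Granting this, Proposition~\ref{thm:ADimpliesreg} applies: the extension $X$ coincides near $q$ with an $\mathcal F$-field, so analytic determinacy together with Lemma~\ref{thm:anextF} forces $X\in\mathcal F(\mathcal V_p)$; hence $\mathcal V_p$ is $\mathcal F$-special for all its points, $\kappa^{\mathcal F}$ is locally constant, and, $M$ being connected, constant. Thus $\mathcal F$ is regular, and Theorem~\ref{thm:globalregular} yields the unique global extension of every local field, equivalently of every germ. The almost Killing case is handled identically, replacing $g_R$ by the average metric of $\hat F$ and using that an almost isometry of $F$ is an isometry of $\hat F$. The main delicate point, in my view, is the uniform extension step: one must ensure not merely that $\mathcal F$-germs are analytic, but that they extend analytically to a \emph{single} neighborhood independent of $q$, which is precisely why the reduction to the finite-type theory of Theorem~\ref{thm:FPanalyticregular} is needed rather than a bare appeal to analyticity; a secondary point requiring care is the real-analyticity of the averaged metrics themselves.
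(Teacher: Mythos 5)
Your proposal is correct and follows the same overall architecture as the paper's proof: admissibility via the average metric $g_R$ (resp.\ the average metric of $\hat F$ for almost Killing fields), analytic determinacy via Remark~\ref{rem:analiticadeterminadofinsler}, and then Proposition~\ref{thm:ADimpliesreg} to get regularity before concluding with Theorem~\ref{thm:globalregular}. The one genuine divergence is in how you verify the extension hypothesis of Proposition~\ref{thm:ADimpliesreg}. The paper invokes the classical global results directly: since $(M,g_R)$ is a simply connected real-analytic Riemannian manifold, Nomizu's Theorems~1~\&~2 (resp.\ Ledger--Obata's Lemma~3) extend any local $g_R$-Killing (resp.\ conformal) field to an analytic field on all of $M$, which in particular furnishes the required analytic extension to any $\mathcal V_p$. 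You instead stay internal to the paper, identifying the $g_R$-Killing (resp.\ conformal) sheaf with the sheaf of $G$-fields of the finite-type $\mathrm O(n)$- (resp.\ $\mathrm{CO}(n)$-) structure of $g_R$ and extracting the uniform local extension property from the proof of Theorem~\ref{thm:FPanalyticregular}. Your route is more self-contained and only needs local extensions (so simple connectedness enters solely through Theorem~\ref{thm:globalregular}), at the cost of leaning on the content of a proof rather than a quotable statement; the paper's route is shorter by external citation. Both versions share the same unverified point, namely the real-analyticity of the averaged metric, which you at least flag explicitly.
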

\begin{proof}
As we have seen above, the sheaf of local Killing (resp. conformal) vector fields of a Finsler manifold is admissible and analytically determined. Therefore, we only have to show that any local Killing (resp. conformal) field is extensible analytically to a prescribed open set and apply Proposition \ref{thm:ADimpliesreg}. Let $\tilde{X}$ be a local Killing (resp. conformal) vector field of $(M,F)$ defined on an open subset $U$. As $\tilde{X}$ is also local Killing (resp. conformal) vector field of the average Riemannian metric given in \eqref{eq:average}, and $(M,g_R)$ is a simply-connected real-analytic Riemannian manifold, \cite[Theorems 1 \& 2]{nomizu60} (resp. \cite[Lemma 3]{LedOba70}) ensures that $\tilde{X}$ can be extended to an analytic Killing (resp. conformal) vector field $X$ for $g_R$ defined on the entire $M$. In particular, $\tilde{X}$ admits a global analytic extension and Proposition \ref{thm:ADimpliesreg} applies. A similar reasoning can be done for almost Killing fields using that they are Killing fields of the symmetric Finsler metric $\hat{F}$.

\end{proof}
\subsection{An application}
Using the above results, we are able to give a nice characterization of the homogeneity of Finsler manifolds. For this, we first need the following technical lemma, which is well known on Semi-Riemannian Geometry

\begin{lemma}
On a (forward) complete Finslerian manifold $M$ every global Killing vector field $K$ is complete.
\end{lemma}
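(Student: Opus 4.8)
The plan is to show that the integral curves of a global Killing field $K$ on a forward complete Finsler manifold $(M,F)$ are defined for all positive time, and then use completeness of $K$ as a field (which for smooth vector fields on a manifold reduces to showing that no maximal integral curve escapes to the boundary of its interval in finite time). The key observation is that $K$, being Killing, preserves $F$, hence the Finsler length of the velocity of any integral curve of $K$ is constant along that curve.

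First I would recall that for a smooth vector field, completeness is equivalent to the statement that every maximal integral curve $\gamma:(\alpha,\beta)\to M$ with $\beta<+\infty$ (resp.\ $\alpha>-\infty$) must have its image leave every compact set as $t\to\beta^-$ (resp.\ $t\to\alpha^+$); this is the standard escape lemma. So it suffices to rule out finite-time escape. Let $\gamma$ be an integral curve of $K$, so $\dot\gamma(t)=K(\gamma(t))$. Since the flow $\phi_t$ of $K$ consists of local isometries of $(M,F)$, the quantity $F(\dot\gamma(t))=F(K(\gamma(t)))$ is constant in $t$: indeed, $\dot\gamma(t)=K_{\gamma(t)}$ and $\gamma(t+s)=\phi_s(\gamma(t))$, so $\dot\gamma(t+s)=\mathrm d\phi_s(\dot\gamma(t))$, and isometries preserve $F$. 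Denote this constant value by $c=F(K(\gamma(0)))$.

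Next I would bound the Finsler length of $\gamma$ on any finite subinterval. On $[0,t]$ the forward Finsler length is
\[
\mathrm{length}(\gamma\vert_{[0,t]})=\int_0^t F(\dot\gamma(\tau))\,\mathrm d\tau=c\,t,
\]
which is finite for finite $t$. Hence, as $t\to\beta^-$ with $\beta<+\infty$, the curve $\gamma$ has bounded forward length, so it stays within the closed forward ball $\overline{B^+}(\gamma(0),c\beta)$. By forward completeness of $(M,F)$ together with the Finsler Hopf--Rinow theorem, closed and forward-bounded subsets are compact; therefore $\gamma([0,\beta))$ is contained in a compact set, contradicting the escape lemma. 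This forces $\beta=+\infty$, i.e.\ every integral curve is defined for all forward time.

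The main obstacle, and the point requiring care, is the asymmetry of the Finsler distance: forward completeness controls only the forward balls, and one must make sure the relevant integral curves are traversed in the forward direction so that it is precisely the forward length and forward balls that appear. Running the flow of $K$ in positive time produces a forward-parametrized curve whose forward length is $ct$, so the argument above gives completeness for $t\to+\infty$; to obtain completeness for negative time as well, I would apply the same reasoning to the reversed Finsler metric $F^{\mathrm{rev}}(v)=F(-v)$, for which $-K$ is again Killing and for which forward completeness of $F$ is equivalent to backward completeness, thereby controlling $t\to-\infty$. Assembling the two directions yields that every maximal integral curve of $K$ is defined on all of $\R$, i.e.\ $K$ is complete.
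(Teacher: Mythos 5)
Your overall strategy --- constancy of $F(\dot\gamma)$ along integral curves of $K$, hence finite forward length in finite time, hence confinement to a compact forward metric ball by the Finslerian Hopf--Rinow theorem, contradicting the escape lemma --- is sound, and it is a genuinely different and more elementary route than the paper's: the paper simply transplants the semi-Riemannian argument of \cite[Proposition 9.30]{ON83}, which extends the flow by using that Killing fields restrict to Jacobi fields along (complete) geodesics, citing the Finslerian Jacobi field theory to justify the transplant. Your forward-time half is correct as written, and it is exactly the classical Riemannian-style proof adapted to the irreversible setting.

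The negative-time half, however, has a genuine gap. Running ``the same reasoning'' for $F^{\mathrm{rev}}(v)=F(-v)$ confines the backward integral curve to a closed \emph{forward} ball of $F^{\mathrm{rev}}$, i.e.\ a \emph{backward} ball of $F$, and compactness of such balls requires forward completeness of $F^{\mathrm{rev}}$, which is \emph{backward} completeness of $F$ --- precisely what the hypothesis does not provide. (Your parenthetical remark points the equivalence the wrong way: forward completeness of $F$ is equivalent to backward completeness of $F^{\mathrm{rev}}$, which is of no use for that argument.) The repair stays inside the original metric. The field $-K$ is also Killing for $(M,F)$, and since the flow $\phi_s$ of $K$ satisfies $\mathrm d\phi_s(K)=K$ and preserves $F$, the quantity $F(-K)$ is likewise constant along integral curves of $K$. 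The curve $s\mapsto\gamma(-s)$ is the integral curve of $-K$ through $\gamma(0)$ traversed in the \emph{forward} direction, with constant $F$-speed $c'=F\bigl(-K(\gamma(0))\bigr)$; its forward $F$-length on $[0,s]$ is therefore $c's$, so for $\alpha>-\infty$ it stays in the compact set $\overline{B^+}\bigl(\gamma(0),c'|\alpha|\bigr)$, and the escape lemma forces $\alpha=-\infty$ using only forward completeness of $F$. Note that the two speeds $c=F\bigl(K(\gamma(0))\bigr)$ and $c'=F\bigl(-K(\gamma(0))\bigr)$ differ in general; this is exactly the asymmetry you flagged but did not fully resolve.
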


The proof follows by using the same arguments as in the semi-Riemannian case (see \cite[Proposition 9.30]{ON83} for instance), and taking into account that the Jacobi fields are well-enough behaved in the Finslerian settings (see \cite[Section 3.4]{Java}, especially Proposition 3.13 and Lemma 3.14). With this previous result at hand, we are ready to prove the following characterization:

\begin{thm}
Let $(M,F)$ be a (connected and) simply-connected, (forward) complete Finslerian manifold. Then, $M$ is homogeneous if, and only if, the following two conditions hold:
\begin{itemize}
\item[(i)] $\kappa^{\mathcal F}_p$ is constant on $M$ ;\smallskip

\item[(ii)] for some point $p_0\in M$ and all $v\in T_{p_0}M$ there exists a local Killing vector field $V$ such that $V(p_0)=v$.
\end{itemize}
\end{thm}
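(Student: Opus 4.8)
The plan is to recognize that $\mathcal F$, the sheaf of local Killing fields of $(M,F)$, is admissible (as established earlier via the average Riemannian metric \eqref{eq:average}), so that condition (i) is precisely the assertion that $\mathcal F$ is \emph{regular} in the sense of Definition~\ref{thm:defregular}. I would then treat the two implications separately. For the direction ``homogeneous $\Rightarrow$ (i) and (ii)'', I would use the isometry group $G=\mathrm{Iso}(M,F)$, which is well known to be a Lie group and, by homogeneity, acts transitively. Given $p,q\in M$, any isometry $\varphi$ with $\varphi(p)=q$ pushes local Killing fields defined near $p$ to local Killing fields defined near $q$, hence induces a linear isomorphism $\mathfrak G^\mathcal F_p\to\mathfrak G^\mathcal F_q$; this forces $\kappa^\mathcal F_p=\kappa^\mathcal F_q$, giving (i). For (ii), the orbit map $g\mapsto g\cdot p_0$ is a submersion onto $M$, so its differential at the identity carries the Lie algebra of $G$ onto $T_{p_0}M$; as this Lie algebra consists of globally (hence locally) defined Killing fields, their values at $p_0$ fill $T_{p_0}M$, which is exactly (ii).

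The substantive direction is ``(i) and (ii) $\Rightarrow$ homogeneous''. Since $\mathcal F$ is admissible and, by (i), regular, and $M$ is simply connected, Theorem~\ref{thm:globalregular} applies: every local Killing field extends to a unique global one, and for each $p\in M$ the restriction $\mathcal F(M)\to\mathfrak G^\mathcal F_p$ is an isomorphism. Thus $\mathfrak g:=\mathcal F(M)$ is a finite-dimensional Lie algebra of globally defined Killing fields, and by (ii) together with this extension property the values $\{K(p_0):K\in\mathfrak g\}$ span $T_{p_0}M$. Because $M$ is forward complete, the preceding lemma guarantees that every element of $\mathfrak g$ is a complete vector field; integrating the finite-dimensional Lie algebra $\mathfrak g$ of complete Killing fields (Palais' theorem) yields a connected Lie group $G$ acting on $M$ by isometries, whose orbit $\mathcal O:=G\cdot p_0$ has tangent space $T_{p_0}\mathcal O=\{K(p_0):K\in\mathfrak g\}=T_{p_0}M$. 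Hence $\mathcal O$ is open.

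It remains to show $\mathcal O=M$; since $M$ is connected, it suffices to prove that $\mathcal O$ is also closed, and this is the main obstacle. The difficulty is that the rank of the evaluation map is only lower semicontinuous, so a priori full-rank (open-orbit) points could accumulate at points whose orbit has strictly smaller dimension. To rule this out I would exploit that $G$ acts transitively on $\mathcal O$ by isometries. Fix $\rho>0$ small enough that the forward metric ball $B^+(p_0,\rho)=\{y:d(p_0,y)<\rho\}$ is contained in the open set $\mathcal O$, and let $q\in\overline{\mathcal O}$ with $x_k\in\mathcal O$, $x_k\to q$; choose $g_k\in G$ with $g_k\cdot p_0=x_k$. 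Since each $g_k$ is an isometry and $\mathcal O$ is $G$-invariant, $g_k\big(B^+(p_0,\rho)\big)=B^+(x_k,\rho)\subseteq\mathcal O$; as $x_k\to q$ we have $d(x_k,q)\to0$, so $q\in B^+(x_k,\rho)$ for $k$ large, whence $q\in\mathcal O$. Therefore $\mathcal O$ is closed, so $\mathcal O=M$, the group $G$ acts transitively, and $M$ is homogeneous. The forward completeness hypothesis enters precisely here, both to make the Killing fields complete (so that $G$ exists) and to drive the ball-transport argument, and I expect this closedness step to be the only genuinely non-formal part of the proof.
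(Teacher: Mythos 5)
Your proof is correct and, in both directions, follows the same overall strategy as the paper: admissibility of the sheaf of local Killing fields via the average Riemannian metric, condition (i) read as regularity, global extension through Theorem~\ref{thm:globalregular}, completeness of the resulting global Killing fields via the lemma preceding the theorem, and an open-orbit argument at $p_0$. The only genuine divergence is the final surjectivity step. The paper forms the quotient $G/H$ by the isotropy group, observes that $i:G/H\to M$, $g\mapsto g(p_0)$, is an injective local diffeomorphism, pulls back $F$ so that $(G/H,i^*(F))$ is a homogeneous --- hence complete --- Finsler manifold, and concludes $i(G/H)=M$ from completeness (as in \cite[Remark 9.37]{ON83}). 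You instead prove directly that the open orbit $\mathcal O=G\cdot p_0$ is closed by transporting a fixed forward ball $B^+(p_0,\rho)\subset\mathcal O$ with the isometries $g_k$; this is more elementary, since it avoids the quotient manifold, the pulled-back metric, and any appeal to Hopf--Rinow, and it handles the non-reversibility of $F$ cleanly because isometries carry forward balls to forward balls and $d(x_k,q)\to 0$ whenever $x_k\to q$. One small correction to your closing remark: forward completeness does not actually ``drive'' the ball-transport step, which uses only openness and $G$-invariance of $\mathcal O$ together with the isometry property; completeness enters solely through the lemma guaranteeing that the globally extended Killing fields are complete, so that $\mathcal D(\mathcal F)$ is an honest group of globally defined isometries.
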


\begin{proof}

Let us show that homogeneity implies (i) and (ii). By homogeneity, the pseudo-group ${\mathcal D}(\mathcal{F})$ associated with the full sheaf ${\mathcal F}$ of local Killing vector fields is transitive, and so, Proposition \ref{thm:transitiveimpliesregular} ensures (i). For (ii) recall that, for each $v\in T_{p_0}M$ we can consider $\epsilon>0$ and some differentiable 1-parameter family of isometries $\{\psi_t\}_{t\in (-\epsilon,\epsilon)}:M\rightarrow M$ with $\psi_0(p_0)=p_0$ and $\frac{\partial}{\partial t}\big|_{t=0}(\psi_{t}(p_0))=v$. Then, $V(p)=\frac{\partial}{\partial t}|_{t=0}(\psi_{t}(p))$ is a Killing vector field with $V(p_0)=v$, and so, (ii) is satisfied.

Let us now assume that (i) and (ii) hold. As $\kappa^{\mathcal F}_p$ is constant, the sheaf of local Killing vector fields ${\mathcal F}$, which is an admissible sheaf, is also a regular one. Then, as $M$ is simply-connected, Theorem \ref{thm:globalregular} ensures that any local Killing vector field extends globally. Moreover, from previous Lemma, such a global Killing vector fields are complete, and so, the pseudo-group ${\mathcal D}({\mathcal F})$ is truly a group. Let $G:={\mathcal D}({\mathcal F})$ and $H$ the isotropy group of the action of $G$ in $M$ at $p_0$. Then we have a map $i:G/H\rightarrow M$, given by $g\mapsto g(p_0)$, which is a local diffeomorphism. This follows from hypothesis (ii) for $p_0$ at $[e]\in G/H$, but it translates by the action of $G$ to all the points in $G/H$.  Observe that the map $i$ is also injective. Moreover, if you endow $G/H$ with the metric $i^*(F)$, then as $F$ is invariant by the action of $G$, we obtain a homogeneous Finslerian  manifold $(G/H,i^*(F))$, which is then complete (repeat the same proof as in \cite[Remark 9.37]{ON83}). As a consequence $i(G/H)=M$ and $M$ is homogeneous.

%

\end{proof}

\section{Pseudo-Finsler local Killing fields}
\label{sec:pseudoFinsler}
Let us now consider infinitesimal symmetries of another type of structure that cannot be described in terms of a finite order $G$-structure: pseudo-Finsler structure. These structures are the indefinite counterpart to Finsler structures, in the same way Lorentzian or pseudo-Riemannian metrics are the indefinite counterpart to Riemannian metrics. A precise definition will be given below.
Let us observe here that, unlike the Finsler (i.e., positive definite) case, the construction of average metrics is not possible in this situation, and we have to resort to a different type of construction, based on the notion of Sasaki metrics.
This construction also works when the fundamental tensor is positive definite but the metric is conic in the sense that it is only defined in some directions \cite{JaSa11}. In such a case it is not possible to construct an average Riemannian metric. A very classical example of this situation  is found in  Kropina metric, and other metrics related to the Zermelo problem with strong wind \cite{CJS14,JV13}.
\subsection{Conic pseudo-Finsler structures}
We recall that a \emph{pseudo-Finsler} structure (or a \emph{conic pseudo-Finsler} structure)
on a (connected) manifold $M$ consists of an open subset $\mathcal T\subset T_0M$,
where $T_0M$ denotes the tangent bundle with its zero section removed, and a smooth function $L:\mathcal T\to\mathds R$
satisfying the following properties:
\begin{enumerate}[(i)]
\item for all $p\in M$, the intersection $\mathcal T_p=\mathcal T\cap T_pM$ is a non-empty open cone of the tangent space $T_pM$;\smallskip

\item $L(tv)=t^2L(v)$ for all $v\in\mathcal T$ and all $t>0$;
\item for all $v\in\mathcal T$, the Hessian $g_v(u,w)=\frac{1}{2}\frac{\partial^2}{\partial t\partial s}L(v+tu+sw)|_{t=s=0}$ is nondegenerate, where $u,w\in T_{\pi(v)}M$.
\end{enumerate}
By continuity, the \emph{fundamental tensor} $g_v$ has constant index, which is called the index of the pseudo-Finsler structure.
The case when $\mathcal T=T_0M$ and the index of $g_v$ is zero, i.e., $g_v$ is positive definite for all $v$, is the standard Finsler structure.
When $g_v$ does not depend on $v$, then we have a standard pseudo-Riemannian manifold. As in the classical Finsler metrics, we can define the associated Chern connection as a family of affine connection (see for example \cite{Jav13}).
\subsection{Pseudo-Finsler isometries}
An \emph{isometry
of the pseudo-Finsler structure $(M,\mathcal T,L)$} is a diffeomorphism $f$ of $M$, with:
\begin{equation}\label{eq:defpsFinslisometry}
\mathrm df(\mathcal T)=\mathcal T,\quad\text{and}\quad
L\circ\mathrm df=L.
\end{equation}
The notion of local isometry is defined similarly.
Clearly, the set $\mathrm{Iso}(M,\mathcal T,L)$ of such pseudo-Finsler isometries is a group with respect to composition, and one has a natural action of $\mathrm{Iso}(M,\mathcal T,L)$ on $M$.
It is proved in \cite{GalPic14} that this action makes $\mathrm{Iso}(M,\mathcal T,L)$ into a Lie transformation group of $M$.
In Appendix~\ref{sec:dimisopseudoFinsler} we will show that $\mathrm{dim}\left(\mathrm{Iso}(M,\mathcal T,L)\right)\le \frac12n(n+1)$, where $n=\mathrm{dim}(M)$. By a similar argument, we will show here that the same inequality holds for the dimension of the space of germs of infinitesimal symmetries of a pseudo-Finsler manifold, see Proposition~\ref{thm:FpFadmissible}.
\smallskip

Given a pseudo-Finsler structure $(M,\mathcal T,L)$ of index $k$, there is an associated pseudo-Riemannian
metric of index $2k$ on the manifold $\mathcal T$, called the \emph{Sasaki metric} of $(M,\mathcal T,L)$,
that will be denoted by $g^L$, and
which is defined as follows. Let $\pi:TM\to M$ be the canonical projection.
The geodesic spray of $L$ defines a \emph{horizontal distribution} on
$\mathcal T$, i.e., a rank $n$ distribution on $\mathcal T$ which is everywhere transversal to the canonical
vertical distribution.
Equivalently, the horizontal distribution associated with $L$ can be defined using the Chern connection of
the pseudo-Finsler structure. In particular, a vector $X\in T \mathcal T$ is horizontal iff $D_{\pi(\alpha)}^\alpha \alpha(0)=0$, where $D$ is the covariant derivative induced by the Chern connection, and  $\alpha:(-\varepsilon,\varepsilon)\rightarrow \mathcal T$ is a curve (transversal to the fibers) such that $\dot\alpha(0)=X$.
For $p\in M$ and $v\in\mathcal T_p$, let us denote by $\mathrm{Ver}_v=\mathrm{Ker}(\mathrm d\pi_v)$ and $\mathrm{Hor}^F_v$ the corresponding subspaces of $T_v\mathcal T$. One has a canonical identification $i_v:T_pM\to \mathrm{Ver}_v$ (given by the differential at $v$ of the inclusion
$T_pM\hookrightarrow TM$); moreover, the restriction of $\mathrm d\pi_v:\mathrm{Hor}_v^L\to T_pM$ is an isomorphism. The Sasaki metric $g^L$ is defined by the following properties:
\begin{itemize}
\item on $\mathrm{Ver}_v$, $g^L$ is the push-forward of the fundamental tensor $g_v$ by the isomorphism
$i_v:T_pM\to\mathrm{Ver}_v$;
\item on $\mathrm{Hor}^L_v$, $g^L$ is the pull-back of the fundamental tensor $g_v$ by the isomorphism
$\mathrm d\pi_v:\mathrm{Hor}^L_v\to T_pM$;
\item $\mathrm{Ver}_v$ and $\mathrm{Hor}^L_v$ are $g^L$-orthogonal.
\end{itemize}
A (local) vector field $X$ on $M$ whose flow consists of (local) isometries for the pseudo-Finsler structure $(M,\mathcal T,L)$ will be called a (local) \emph{Killing field of $(M,\mathcal T,L)$}.
\subsection{Lifting of isometries and Killing fields}
\label{sec:liftingpseudoFinsler}
If $f$ is a (local) isometry of $(M,\mathcal T,L)$, then $\mathrm df$ is a local isometry of
the pseudo-Riemannian manifold $\mathcal T$ endowed with the Sasaki metric $g^L$. This is proved in
\cite{GalPic14}, see Appendix~\ref{sec:dimisopseudoFinsler}.

The lifting $f\mapsto \mathrm df$ of pseudo-Finsler isometries preserves the composition of diffeomorphisms. Thus, given a (local) vector field $X$ in $M$, the flow of $X$ can be lifted to a flow in $TM$; this is the flow of a (local) vector field $\widetilde X$ in $TM$. If $X$ is a Killing field of $(M,\mathcal T,L)$, then the flow of $\widetilde X$
preserves\footnote{This follows from the first property of pseudo-Finsler isometries in \eqref{eq:defpsFinslisometry}.}
$\mathcal T$, and we obtain (by restriction) a vector field on $\mathcal T$.
Clearly, the flow of $\widetilde X$ consists of $g^L$-isometries, so that $\widetilde X$ is a (local) Killing vector field of the pseudo-Riemannian manifold $(\mathcal T,g^L)$.
An explicit formula for $\widetilde X$ can be written using local coordinates and an auxiliary symmetric
connection on $M$.
If $F_t$ denotes the flow of $X$ and $\widetilde F_t=\mathrm dF_t$ its lifting, we have:
\[\widetilde X(v)=\tfrac{\mathrm d}{\mathrm dt}\big\vert_{t=0}\widetilde F_t(v)=\tfrac{\mathrm d}{\mathrm dt}\big\vert_{t=0}\mathrm dF_t(v)\]
for all $v\in\mathcal T$ and all $x\in M$. The curve $t\mapsto\mathrm dF_t(v)$ projects onto the curve $t\mapsto F_t(x)$ in $M$, so that the horizontal component of $\widetilde X(v)$ is:
\begin{equation}\label{eq:horcomptildeX}
\big[\widetilde X(v)\big]_{\mathrm{hor}}=X(x).
\end{equation}
Using the auxiliary connection,
let us compute the vertical component of $\widetilde X(v)$ by:
\begin{equation}\label{eq:vertcomponentfield}
\big[\widetilde X(v)\big]_{\mathrm{ver}}=\tfrac{\mathrm D}{\mathrm dt}\big\vert_{t=0}\big[\mathrm dF_t(v)\big]=\nabla_vX.
\end{equation}

\subsection{The sheaf of local pseudo-Finsler Killing fields}
Let $\mathcal F^\text{pF}$ denote the sheaf of local Killing fields of $(M,\mathcal T,L)$.
Using the relations between the pseudo-Finsler structure and the associated Sasaki metric, we can prove the following:
\begin{prop}\label{thm:FpFadmissible} Let $(M,\mathcal T,L)$ be a pseudo-Finsler manifold, with $n=\mathrm{dim}(M)$. Then:
\begin{itemize}
\item[(a)] $\mathcal F^\text{pF}$ has bounded rank: for all open connected subset $U\subset M$
\[\mathrm{dim}\big(\mathcal F^\text{pF}(U)\big)\le\tfrac12n(n+1);\]
\item[(b)] $\mathcal F^\text{pF}$ has the unique continuation property.
\end{itemize}
\end{prop}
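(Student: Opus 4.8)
The plan is to use the lifting $X\mapsto\widetilde X$ constructed in Subsection~\ref{sec:liftingpseudoFinsler}, which embeds $\mathcal F^{\text{pF}}$ into the sheaf of Killing fields of the pseudo-Riemannian manifold $(\mathcal T,g^L)$: if $X$ is a local Killing field of $(M,\mathcal T,L)$, then $\widetilde X$ is a local Killing field of $g^L$, and the assignment is linear, local (it commutes with restriction to open subsets, being defined fiberwise), and injective, since by \eqref{eq:horcomptildeX} the horizontal component $[\widetilde X(v)]_{\mathrm{hor}}=X(\pi(v))$ recovers $X$ from $\widetilde X$. Both assertions will then be reduced to classical facts about pseudo-Riemannian Killing fields through this embedding.

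I would dispose of (b) first, as it is the easier direction. A Killing field of a pseudo-Riemannian manifold that vanishes on a non-empty open set vanishes on any connected domain: it is determined by its value and covariant derivative at a single point, so the common zero set of the field and of its covariant derivative is open, closed and non-empty. Hence the sheaf of Killing fields of $(\mathcal T,g^L)$ has the unique continuation property. If $X\in\mathcal F^{\text{pF}}(U)$ vanishes on a non-empty open $V\subseteq U$, then $\widetilde X$ vanishes on $\pi^{-1}(V)\cap\mathcal T$, so $\widetilde X\equiv 0$, and therefore $X\equiv 0$ by injectivity. This proves (b).

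For (a), the naive estimate from $\dim\mathcal T=2n$, namely $\tfrac12(2n)(2n+1)$, is far too weak, so the special structure of the lifts must be exploited. The strategy is to show that a germ in $\mathfrak G^{\text{pF}}_p$ is determined by the $1$-jet $\big(X(p),\nabla X(p)\big)$ of $X$ at $p$ (with $\nabla$ the auxiliary symmetric connection used in \eqref{eq:vertcomponentfield}), and that the Killing condition confines this $1$-jet to a subspace of $T_pM\oplus\mathfrak{gl}(T_pM)$ of dimension at most $\tfrac12 n(n+1)$. For the dimensional constraint, differentiating the isometry relation \eqref{eq:defpsFinslisometry} along the flow yields, exactly as in Proposition~\ref{thm:diffeqnFinslerKilling}(i) (whose computation is valid verbatim for $L$), the infinitesimal condition $g_v(v,\nabla^v_vX)=0$ for all $v\in\mathcal T_p$. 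This forces the derivative part of the $1$-jet to lie in the linear space $\mathfrak k_p=\{B\in\mathfrak{gl}(T_pM):g_v(v,Bv)=0\ \text{for all}\ v\in\mathcal T_p\}$, which has dimension at most $\tfrac12 n(n-1)$ (with equality in the pseudo-Riemannian case, where it is the space of $g$-skew endomorphisms); the precise count is the one carried out for the isometry group in Appendix~\ref{sec:dimisopseudoFinsler}. Together with the $n$ dimensions carried by $X(p)$, this gives the bound $\tfrac12 n(n+1)$.

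The main obstacle is the rigidity, i.e. injectivity of $\germ_p X\mapsto\big(X(p),\nabla X(p)\big)$. By \eqref{eq:horcomptildeX}--\eqref{eq:vertcomponentfield}, the $1$-jet of $X$ at $p$ determines the restriction $\widetilde X\big\vert_{\mathcal T_p}$; the delicate step is to pass from this fiber data to the full $1$-jet of $\widetilde X$ at a point $v_0\in\mathcal T_p$, after which pseudo-Riemannian Killing rigidity on $(\mathcal T,g^L)$ closes the argument. The difficulty is that $\nabla^{g^L}\widetilde X(v_0)$ evaluated in horizontal directions a priori involves the \emph{second} derivative of $X$; one overcomes this using the skew-symmetry of $\nabla^{g^L}\widetilde X$, the $g^L$-orthogonal splitting $T_{v_0}\mathcal T=\mathrm{Ver}_{v_0}\oplus\mathrm{Hor}^L_{v_0}$, and the Killing equation, which together show that the horizontal block is determined by (and, when $X(p)$ and $\nabla X(p)$ vanish, coincides with) the fiber data. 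This is precisely the computation performed for $\mathrm{Iso}(M,\mathcal T,L)$ in Appendix~\ref{sec:dimisopseudoFinsler}, and it transfers to germs of Killing fields; invoking it completes the proof of (a).
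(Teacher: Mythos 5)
Your reduction to the Killing sheaf of the Sasaki metric $(\mathcal T,g^L)$ via the lift $X\mapsto\widetilde X$, and your proof of (b), coincide with the paper's argument (both of you tacitly assume $\pi^{-1}(U)\cap\mathcal T$ connected, which can fail since the cones $\mathcal T_p$ may be disconnected; this is a shared, repairable point). For (a) you correctly observe that the paper's ``follows immediately'' only yields $n(2n+1)$ and that the sharp bound needs the jet analysis of Appendix~\ref{sec:dimisopseudoFinsler}; your count via $\mathfrak k_p=\{B:g_v(v,Bv)=0\ \forall v\in\mathcal T_p\}$ is the right infinitesimal analogue of that count, although $\dim\mathfrak k_p\le\tfrac12 n(n-1)$ is itself not immediate for non-quadratic $L$: it holds because $\mathfrak k_p$ is the Lie algebra of the group of linear maps preserving $L\vert_{\mathcal T_p}$, and each such map is an isometry $(T_pM,g_{v_0})\to(T_pM,g_{Av_0})$ determined by $Av_0$ (constrained to a level set of $L$) together with an element of the stabilizer of $v_0$ in $\mathrm O(g_{v_0})$.

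The genuine gap is in your rigidity step. Skew-symmetry of $\nabla^{g^L}\widetilde X(v_0)$ plus the orthogonality of $\mathrm{Ver}_{v_0}$ and $\mathrm{Hor}^L_{v_0}$ determines from the fibre data $\widetilde X\vert_{\mathcal T_p}$ only the $\mathrm{Ver}\to\mathrm{Hor}$ and $\mathrm{Hor}\to\mathrm{Ver}$ blocks: knowing $\nabla^{g^L}_V\widetilde X$ for all vertical $V$ gives $g^L(\nabla^{g^L}_H\widetilde X,V)=-g^L(\nabla^{g^L}_V\widetilde X,H)$, hence the vertical component of $\nabla^{g^L}_H\widetilde X$, but nothing about its horizontal component. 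The $\mathrm{Hor}\to\mathrm{Hor}$ block remains an arbitrary $g_{v_0}$-skew endomorphism of an $n$-dimensional space ($\tfrac12 n(n-1)$ free parameters), and ``the Killing equation'' is exactly the skew-symmetry you already used, so it adds nothing. Indeed a Killing field of a $2n$-dimensional pseudo-Riemannian manifold can vanish identically on an $n$-dimensional totally geodesic submanifold without vanishing (rotations of flat $\mathds R^{2n}$ fixing $\mathds R^n\times\{0\}$ pointwise), so fibre data alone cannot determine the germ of $\widetilde X$. What pins down the horizontal block is not a property of Killing fields of $g^L$ but the fact that $\widetilde X$ is a \emph{lift}: by the diagrams \eqref{eq:2commdiagramas}, $\mathrm d(\mathrm dF_t)_{v_0}$ is block-diagonal with \emph{both} blocks equal to $\mathrm d(F_t)_p$, and differentiating in $t$ forces the $\mathrm{Hor}\to\mathrm{Hor}$ block of the derivative of $\widetilde X$ to agree with the $\mathrm{Ver}\to\mathrm{Ver}$ block, i.e.\ with $\nabla X(p)$ up to zeroth-order terms in $\widetilde X(v_0)$. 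This is the ingredient the appendix actually uses for $\mathrm{Iso}(M,\mathcal T,L)$ (it never invokes skew-symmetry), and it is what must be transferred to Killing fields; as written, your argument only bounds the rank by $\tfrac12 n(n+1)+\tfrac12 n(n-1)=n^2$.
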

\begin{proof}
Denote by $\mathcal F^{\mathrm{pR}(g^L)}$ the sheaf of local Killing fields of the pseudo-Riemannian
manifold $(\mathcal T,g^L)$, where $g^L$ is the Sasaki metric of $(M,\mathcal T,L)$.
Given a connected open subset $U\subset M$, denote by $\widetilde U=\pi^{-1}(U)\cap\mathcal T$, where
$\pi:TM\to M$ is the canonical projection. Then, $\widetilde U$ is a connected open subset of $\mathcal T$.
Given $X\in\mathcal F^\text{pF}(U)$, let $\widetilde X\in\mathfrak X(\widetilde U)$ be the lifting of $X$.
As we have seen in Subsection~\ref{sec:liftingpseudoFinsler}, $\widetilde X\in\mathcal F^{\mathrm{pR}(g^L)}(\widetilde U)$, and the maps $X\mapsto\widetilde X$ gives an injective linear map
from $\mathcal F^\text{pF}(U)$ to $\widetilde X\in\mathcal F^{\mathrm{pR}(g^F)}(\widetilde U)$.
Injectivity follows easily from the fact that, for $v\in\widetilde U$ and $x=\pi(v)$,
then the horizontal component of $\widetilde X(v)$ is $X(x)$. Then, rank boundedness and unique continuation property for the sheaf $\mathcal F^\text{pF}$ follow immediately from the corresponding properties of $\mathcal F^{\mathrm{pR}(g^L)}$.
\end{proof}
Observe that the Killing fields of pseudo-Finsler metrics are also characterized by the equation in part $(i)$ of Proposition \ref{thm:diffeqnFinslerKilling}. Then it can be shown that they are analytically determined as in Example \ref{exa:Killinganaldet}.
\begin{cor}\label{thm:extpseudoFinslerKillfields}
Let $(M,\mathcal T,L)$ be a simply connected pseudo-Finsler manifold. If $(M,\mathcal T,L)$ is either locally homogeneous, or real-analytic, then every local Killing field for $(M,\mathcal T,L)$ admits a global extension.
\end{cor}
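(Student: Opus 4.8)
The strategy is to apply Theorem~\ref{thm:globalregular} to the sheaf $\mathcal F^{\mathrm{pF}}$ of local pseudo-Finsler Killing fields. By Proposition~\ref{thm:FpFadmissible} this sheaf is admissible (bounded rank and unique continuation), and $M$ is simply connected by hypothesis; hence the global extension of every local Killing field will follow at once from Theorem~\ref{thm:globalregular}, provided we can verify that $\mathcal F^{\mathrm{pF}}$ is \emph{regular}. I therefore treat the two hypotheses separately, each supplying regularity by a different route.

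First suppose $(M,\mathcal T,L)$ is locally homogeneous. The sheaf $\mathcal F^{\mathrm{pF}}$ is \emph{full}, since the pull-back of a Killing field by a local isometry is again a Killing field, and it has bounded rank by Proposition~\ref{thm:FpFadmissible}(a). Local homogeneity means that the evaluation map $\mathrm{ev}_x$ is surjective at every $x\in M$, i.e.\ that $\mathcal F^{\mathrm{pF}}$ is a transitive (full) Lie algebra sheaf; by Proposition~\ref{thm:transitivities} the associated pseudo-group $\mathcal D(\mathcal F^{\mathrm{pF}})$ then acts transitively on $M$. Proposition~\ref{thm:transitiveimpliesregular} now applies directly and yields that $\mathcal F^{\mathrm{pF}}$ is regular, completing this case.

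Now suppose $(M,\mathcal T,L)$ is real-analytic. As observed after Proposition~\ref{thm:FpFadmissible}, local Killing fields are characterized by the equation in part~(i) of Proposition~\ref{thm:diffeqnFinslerKilling}, and exactly as in Example~\ref{exa:Killinganaldet} this shows that $\mathcal F^{\mathrm{pF}}$ is analytically determined. To conclude regularity it then suffices, by Proposition~\ref{thm:ADimpliesreg}, to produce for each $p\in M$ a neighborhood $\mathcal V_p$ over which every germ of a local Killing field extends to a real-analytic (not necessarily Killing) vector field. The plan is to obtain this extension by lifting to the Sasaki metric, mimicking the role played by the average metric in the proof of Proposition~\ref{prop:analyFinsCase}: as recalled in Subsection~\ref{sec:liftingpseudoFinsler}, the lift $\widetilde X$ of a local Killing field $X$ is a Killing field of the real-analytic semi-Riemannian manifold $(\mathcal T,g^L)$, hence is itself real-analytic, and Nomizu's extension theory \cite{nomizu60} continues it analytically as a Killing field of $g^L$ over the lift $\pi^{-1}(\mathcal V_p)\cap\mathcal T$ of a sufficiently small neighborhood $\mathcal V_p$.

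The delicate point, and the main obstacle, is to descend this extension from $\mathcal T$ back to $M$. The extended field agrees with the lift of $X$ on an open subset, hence is $\pi$-projectable there; by Lemma~\ref{lemmaprojectable} it is then projectable over the whole connected open set $\pi^{-1}(\mathcal V_p)\cap\mathcal T$, and its projection is the required real-analytic extension of $X$ to $\mathcal V_p$. Care is needed here precisely because $\mathcal T$, being an open cone subbundle of $TM$, need not be simply connected even when $M$ is; this forces one to work locally, shrinking $\mathcal V_p$ so that $\pi^{-1}(\mathcal V_p)\cap\mathcal T$ supports Nomizu's local analytic continuation, rather than invoking a single global extension on $\mathcal T$. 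Once regularity is established in either case, Theorem~\ref{thm:globalregular} furnishes the unique global extension of every local Killing field, as claimed.
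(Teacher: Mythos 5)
Your proof follows essentially the same route as the paper's: admissibility from Proposition~\ref{thm:FpFadmissible}, regularity via fullness and Proposition~\ref{thm:transitiveimpliesregular} in the locally homogeneous case, and via analytic determinacy, the Sasaki lift, Lemma~\ref{lemmaprojectable} and Proposition~\ref{thm:ADimpliesreg} in the real-analytic case, all feeding into Theorem~\ref{thm:globalregular}. The simple-connectivity caveat you raise about $\pi^{-1}(\mathcal V_p)\cap\mathcal T$ is well spotted (the paper glosses over it), but shrinking $\mathcal V_p$ does not cure it, since the fibre $\mathcal T_x$ is a fixed open cone that may itself have nontrivial fundamental group; the correct remedy is to extend the lifted Killing field over a simply connected open subset of $\pi^{-1}(\mathcal V_p)\cap\mathcal T$ that still surjects onto $\mathcal V_p$ (for instance a thin tube around a local real-analytic section through the base point) and only then project via Lemma~\ref{lemmaprojectable}.
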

\begin{proof}
By Proposition \ref{thm:FpFadmissible}, the sheaf of local Killing fields of $(M,\mathcal T,L)$ is admissible. The conclusion will follow from Theorem \ref{thm:globalregular}, once we show that the sheaf is regular. If $(M,\mathcal T,L)$ is locally homogeneous, then the regularity follows from the fact that the sheaf of Killing vectors is a full sheaf and Proposition \ref{thm:transitiveimpliesregular}. For the real-analytic case, we will apply Theorem \ref{thm:ADimpliesreg}. Consider any point $p\in M$ and an open (simply-connected) subset $U$ which contains $p$ and it admits a coordinate system. Now let $q\in U$ and $K$ a local Killing field of $(M,\mathcal T,L)$ defined in $U'\subset U$, with $q\in U'$ and $\tilde{K}$, the  lifting to $\mathcal T$ which is a local Killing field of $(\mathcal T,g^L)$. Observe that as the sheaf of local Killing fields of a pseudo-Riemannian manifold is admissible and regular (when all the data is analytic), then $\tilde{K}$ can be extended to a Killing field  $\hat{K}$ in $\tilde{U}=\pi^{-1}(U)$. Moreover, as such extension is projectable on an open subset of $\tilde{U}$, Lemma \ref{lemmaprojectable} ensures that $\tilde{K}$ is projectable on the entire $\tilde{U}$.
The projection is an analytic extension of $K$ to $U$, as required to apply Theorem \ref{thm:ADimpliesreg}.
\end{proof}

\section{Affine fields of sprays}
\label{sec:sprays}
Let us consider a manifold $M$ and a spray $S$ in a conic subset $A$ of the tangent bundle $TM$, which by definition  is a vector field in $A$ which,  in the natural coordinates  $(x^1,\ldots,x^n,y^1,\ldots,y^n, \Omega\times \R^n)$ for $TM$ associated with a coordinate system $(\varphi =(x^1,\ldots,x^n),\Omega)$ of $M$, is expressed as
\[
S=\sum_{i=1}^n(y^i \frac{\partial}{\partial x^i}-2 G^i\frac{\partial}{\partial y^i})
\]
where $G^i$ are positive homogeneous functions of degree two on $T\Omega\cap A$ (see \cite[Chapter 4]{Sh01}). The {\it geodesics of the spray} are the projections in $M$ of the integral curves of $S$ in $A$ and an {\it affine vector field of the spray} is a vector field $K\in {\mathfrak X}(M)$ whose flow preserves the geodesics of the spray. Associated with any spray, we can define the {\it Berwald connection,} which is determined by the {\it Christoffel symbols}
\[\Gamma_{\,\,jk}^i(x,y):=\frac{\partial^2 G^i}{\partial y^j\partial y^k}(x,y)\]
moreover, the {\it non-linear connection} is determined by
\[N_{\,j}^i(x,y):=\frac{\partial G^i}{\partial y^j}(x,y).\]
With the help of the Christoffel symbols, which are positive homogeneous of degree zero, for every {\it $A$-admissible vector field } $V$ (here $A$-admissible  means that at every point $p\in M$ takes values in $A_p=A\cap T_pM$), we can define a linear connection $\nabla^V$ which is torsion-free, and for every curve $\alpha:[a,b]\rightarrow M$, and every admissible vector field $W$ along $\alpha$, namely, with $W(t)\in A$ for every $t\in [a,b]$, a covariant derivative $D^W_\alpha$. In fact, the geodesics of the spray are characterized by the equation $D^{\dot\alpha}_\alpha \dot\alpha=0$. Then we can also define the {\it Berwald tensor} as
\[B_V(X,Y,Z)=\frac{\partial}{\partial t}\left(\nabla^{V+tZ}_XY\right)|_{t=0},\]
where  $X,Y,Z$ are arbitrary smooth vector fields on $\Omega$ (see \cite[Ch. 6]{Sh01}). Let us observe that the tensor $B_V$ is symmetric in its first two arguments because  $\nabla^V$ is torsion-free. Moreover,  in coordinates $B_V$ is given by
\[B_V(X,Y,Z)=X^j Y^k Z^l B^i_{\,\,jkl}(V) \frac{\partial}{\partial x^i},\]
where $B^i_{\,\,jkl}(V)=\frac{\partial \Gamma^i_{\,\, jk}}{\partial y^l}(V)=\frac{\partial^3 G^i}{\partial y^j\partial y^k\partial y^l}$. In particular, it is clear  that the value of $B_V(X,Y,Z)$ at a point $p\in \Omega$ depends only on $V(p)$ and not on the extension $V$ used to compute it and $B_V$ is symmetric. From the homogeneity of $\nabla^V$, namely, from the property $\nabla^{\lambda V}=\nabla^V$ for every $\lambda>0$, it follows easily that
$B_V(X,Y,V)=0$ and by the symmetry:
\begin{equation}\label{Pprop}
B_V(V,X,Y)=B_V(X,V,Y)=B_V(X,Y,V)=0.
\end{equation}
Now observe that proceeding as in \cite{J14}, we can relate the curvature tensor of the affine connection $\nabla^V$ with the  Berwald curvature tensor. More precisely, proceeding  in an analogous way to \cite[Theorem 2.1]{J14}, we get
\begin{equation}
R^V(X,Y)Z=R_V(X,Y)Z+B_V(Y,Z,\nabla^V_XV)-B_V(X,Z,\nabla^V_YV),
\end{equation}
where
\begin{multline*}
R_V(X,Y)Z=\left[Z^iY^j(X^p \frac{\partial\Gamma_{\,\,ij}^k}{\partial x^p}(V)-X^m N_{\, m}^l(V) \frac{\partial\Gamma_{\,\,ij}^k}{\partial y^p}(V)) \right.\\
-Z^iX^j (Y^p\frac{\partial \Gamma_{\,\,ij}^k}{\partial x^p}(V)-Y^m N_{\, m}^l(V) \frac{\partial\Gamma_{\,\,ij}^k}{\partial y^p}(V))\\\left.
 +Z^iY^j X^m \left(\Gamma_{\,\,ij}^l(V) \Gamma_{\,\,lm}^k(V)-\Gamma_{\,\,im}^l (V)\Gamma_{\,\,lj}^k (V)\right)\right]\frac{\partial}{\partial x^k}.\label{Firstcur}
 \end{multline*}
Let us consider an  $A$-admissible two parameter map
\[
  \Lambda: [a,b]\times (-\varepsilon,\varepsilon)\rightarrow M,\quad   (t,s)\rightarrow \Lambda(t,s).
\]
(Here `$A$-admissible' means that $\partial_t \Lambda(t,s)=\frac{\partial\Lambda}{\partial t}(t,s)\in A$ for every $(t,s)\in [a,b]\times (-\varepsilon,\varepsilon)$). Moreover, denote $\beta_t:(-\varepsilon,\varepsilon)\rightarrow M $ defined as $\beta_t(s)=\Lambda(t,s)$ for every $t\in [a,b]$ and $\gamma_s:[a,b]\rightarrow M$ defined as $\gamma_s(t)=\Lambda(t,s)$ for every $s\in (-\varepsilon,\varepsilon)$. Then for every smooth vector field $W$ along $\Lambda$ we define the curvature operator
\begin{equation}\label{def:curtwopar}
R^\Lambda(W):=D_{\gamma_{s}}^{\partial_t\Lambda}D_{\beta_{t}}^{\partial_t\Lambda}W-D_{\beta_{t}}^{\partial_t\Lambda}D_{\gamma_{s}}^{\partial_t\Lambda}W,
\end{equation}
and proceeding as in \cite[Theorem 1.1]{J14} and taking into account \eqref{Pprop}, we get
\begin{equation}\label{curformula}
R^\Lambda(W)=R_{\dot\gamma}(\dot\gamma,U)W+B_{\dot\gamma}(U,W,D_\gamma^{\dot\gamma}\dot\gamma),
\end{equation}
where $U$ is the variational vector field of $\Lambda$ along $\gamma$, namely, $U(t)=\partial_s\Lambda(t,0)$. We can now characterize affine fields of sprays.
\begin{prop}\label{thm:charaffinefields}
A vector field $K\in {\mathfrak X}(M)$ is an affine field of a spray $(A,S)$ if and only if for any geodesic $\gamma:[a,b]\rightarrow M$ of the spray, it holds
\begin{equation}\label{Jacobi}
 D^{\dot\gamma}_\gamma D^{\dot\gamma}_\gamma K+R_{\dot\gamma}(\dot\gamma,K)\dot\gamma=0.
 \end{equation}
\end{prop}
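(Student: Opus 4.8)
The plan is to prove both implications at once by means of a single geodesic variation. Let $\varphi_s$ denote the (local) flow of $K$; by definition $K$ is an affine field of $(A,S)$ exactly when each $\varphi_s$ carries geodesics to geodesics. Fix a geodesic $\gamma:[a,b]\to M$, set $\Lambda(t,s)=\varphi_s(\gamma(t))$, and write $\gamma_s:=\Lambda(\cdot,s)$, so that $\gamma_0=\gamma$ and the variational field is $U(\cdot,s):=\partial_s\Lambda(\cdot,s)=K\circ\gamma_s$ (this last equality holding for every $s$, since $\varphi_s$ is the flow of $K$). The single object to control is the covariant acceleration $a(t,s):=D^{\dot\gamma_s}_{\gamma_s}\dot\gamma_s$, because the statement ``$\varphi_s$ preserves geodesics'' is precisely $a\equiv0$.

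First I would establish the basic identity. Applying the curvature formula \eqref{curformula} to $W=\partial_t\Lambda$ (which is legitimate for an arbitrary, not necessarily geodesic, variation) together with the symmetry lemma $D^{\partial_t\Lambda}_{\gamma_s}\partial_s\Lambda=D^{\partial_t\Lambda}_{\beta_t}\partial_t\Lambda$ for the reference-dependent covariant derivative, and noting that the Berwald term $B_{\dot\gamma_s}(U,\dot\gamma_s,\cdot)$ vanishes identically by \eqref{Pprop} (its middle slot is the reference vector $\dot\gamma_s$), one obtains
\[
\frac{\mathrm D}{\mathrm ds}\,a=\mathcal J_{\gamma_s}(U),\qquad \mathcal J_{\gamma_s}(U):=D^{\dot\gamma_s}_{\gamma_s}D^{\dot\gamma_s}_{\gamma_s}U+R_{\dot\gamma_s}(\dot\gamma_s,U)\dot\gamma_s,
\]
the right-hand side being exactly the left-hand side of \eqref{Jacobi} applied to $U$. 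The forward implication is then immediate: if $K$ is affine, each $\gamma_s$ is a geodesic, so $a\equiv0$ and hence $\frac{\mathrm D}{\mathrm ds}a\equiv0$; evaluating at $s=0$ gives $\mathcal J_{\gamma}(K\circ\gamma)=0$, which is precisely \eqref{Jacobi}. Since $\gamma$ was an arbitrary geodesic, \eqref{Jacobi} holds along all of them.

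For the converse I would argue that the hypothesis upgrades this identity into a linear ordinary differential equation for $a$ in the variable $s$. Expanding $D^{\dot\gamma_s}_{\gamma_s}D^{\dot\gamma_s}_{\gamma_s}(K\circ\gamma_s)$ by the chain rule for the Berwald connection, the acceleration enters only through terms that are \emph{algebraically linear} in $a=D^{\dot\gamma_s}_{\gamma_s}\dot\gamma_s$ (produced when a covariant $t$-derivative meets the varying reference direction $\dot\gamma_s$); the remaining, $a$-independent part is the pointwise ``affine defect'' $\Phi$, the function on the conic domain $A$ whose value at $(x,w)$ is the left-hand side of \eqref{Jacobi} for a curve through $x$ with velocity $w$ and zero acceleration. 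Since through every $(x,w)\in A$ there passes a geodesic with velocity $w$, and along a geodesic $a=0$, the assumption that \eqref{Jacobi} holds along every geodesic is equivalent to $\Phi\equiv0$ on $A$. Substituting this, the identity becomes $\frac{\mathrm D}{\mathrm ds}a=\mathcal M(t,s)\,a$ with $\mathcal M$ a pointwise endomorphism, i.e.\ for each fixed $t$ a linear homogeneous first-order equation with initial datum $a(t,0)=0$; uniqueness forces $a\equiv0$, so each $\gamma_s$ is a geodesic and $K$ is affine.

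The delicate points, all tied to the direction dependence of $\nabla^V$, are where I expect the real work. The passage from \eqref{curformula} to the displayed identity must carefully track the Berwald correction terms generated whenever a covariant derivative acts on the varying reference vector $\dot\gamma_s$, and it relies on the homogeneity relations \eqref{Pprop} to make the superfluous terms collapse. The main obstacle, however, is the bookkeeping in the converse: one must verify that in the expansion of $D^{\dot\gamma_s}_{\gamma_s}D^{\dot\gamma_s}_{\gamma_s}(K\circ\gamma_s)$ only $a$ itself appears, and never a $t$-derivative of $a$, so that the resulting relation is a genuine pointwise-in-$t$ linear ODE in $s$ rather than a transport equation along $\gamma_s$. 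It is exactly this feature that permits the clean appeal to uniqueness from the single condition $a(\cdot,0)=0$ and thereby closes the argument.
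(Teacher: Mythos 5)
Your forward implication is essentially the paper's own argument: the flow variation $\Lambda(t,s)=\varphi_s(\gamma(t))$, the curvature identity \eqref{curformula} applied to $W=\partial_t\Lambda$, the symmetry of the torsion-free Berwald connection, and \eqref{Pprop} to kill the Berwald term. Your converse, however, is genuinely different. The paper builds a second, \emph{geodesic} variation: it takes the integral curve $\alpha$ of $K$ through $\gamma(t_0)$, transports $\dot\gamma(t_0)$ by the flow to obtain initial velocities $W(s)$, and lets $\gamma_s$ be the geodesic with data $\big(\alpha(s),W(s)\big)$; the variational field of this variation is a Jacobi field, while $K\circ\gamma_s$ solves the same second-order equation \eqref{Jacobi} (legitimately, since these $\gamma_s$ are geodesics by construction) with the same initial conditions at $t_0$, so uniqueness of Jacobi fields identifies the two, the transversal curves are integral curves of $K$, and $\varphi_s\circ\gamma=\gamma_s$ is a geodesic. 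You instead stay with the flow variation, note that the hypothesis is equivalent to the vanishing on all of $A$ of the zero-acceleration part $\Phi$ of the Jacobi operator (every $(x,w)\in A$ being the initial datum of some geodesic), and convert the identity $\tfrac{\mathrm D}{\mathrm ds}a=\mathcal J_{\gamma_s}(K\circ\gamma_s)$ into a homogeneous linear first-order ODE in $s$ for the acceleration $a$ at each fixed $t$, with $a(\cdot,0)=0$. The two verifications you flag are indeed the crux and both check out: the Berwald correction $B_{\dot\gamma_s}(U,\dot\gamma_s,a)$ dies by \eqref{Pprop} even when $a\neq0$ (in the paper's forward direction it dies only because $a=0$, so your observation is genuinely needed); and since $D^{\dot\sigma}_\sigma K$ is a function of $(\sigma,\dot\sigma)$ and the $1$-jet of $K$, one further covariant $t$-derivative produces $\ddot\sigma$ only linearly and no higher $t$-derivatives, so $\mathcal J_{\gamma_s}(K\circ\gamma_s)$ is affine in $a$ with constant term $\Phi=0$. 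Your route costs this extra bookkeeping but buys a single identity yielding both implications and replaces the comparison of two Jacobi fields by uniqueness for a linear ODE in the flow parameter. (Both proofs tacitly restrict to $s$ small enough that the relevant curves remain $A$-admissible; this is harmless and equally implicit in the paper.)
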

\begin{proof}
If $K$ is an affine field and $\varphi_s$ is its flow, consider the two-parameter map $\Lambda(t,s)=\varphi_s(\gamma(t))$, which is defined in $[a,b]\times (-\varepsilon,\varepsilon)$ for $\varepsilon>0$ small enough. Observe that for every $s$, the curve $\gamma_s$ is a geodesic and then $D_{\gamma_s}^{\dot\gamma_s}\dot\gamma_s=0$. Therefore, $D_{\beta_t}^{\dot\gamma_s} D_{\gamma_s}^{\dot\gamma_s}\dot\gamma_s=0$ and using \eqref{def:curtwopar} and \eqref{curformula}, we get
\[R_{\dot\gamma_s}(\dot\gamma_s,K)\dot\gamma_s=-D_{\gamma_{s}}^{\dot\gamma_s}D_{\beta_{t}}^{\dot\gamma_s}\dot\gamma_s.\]
As the Berwald connection is torsion-free, $D_{\beta_{t}}^{\dot\gamma_s}\dot\gamma_s=D_{\gamma_{s}}^{\dot\gamma_s}\dot\beta_t$ and \eqref{Jacobi} follows.

For the converse, assume that $K$ is a vector field that satisfies \eqref{Jacobi} for any geodesic $\gamma:[a,b]\rightarrow M$. Let $t_0\in [a,b]$ and $\alpha:(-\varepsilon,\varepsilon)\rightarrow M$ an integral  curve of $K$ with $\alpha(0)=\gamma(t_0)$. Now obtain a vector field $W$ along $\alpha$ by translating with the flow of $K$ the vector $\dot\gamma(t_0)$ and consider the geodesic $\gamma_s$ starting at $\alpha(s)$ with velocity $W(s)$ for every $s\in (-\varepsilon,\varepsilon)$. We obtain in this way a two-parameter map
$\Lambda:[a,b]\times   (-\varepsilon,\varepsilon)\rightarrow M$ given by $\Lambda(t,s)=\gamma_s(t)$ for every $(t,s)\in [a,b]\times   (-\varepsilon,\varepsilon)$. As the longitudinal curves of the variation are geodesics, then the variation vector field along any $\gamma_s$ is a Jacobi field that satisfies \eqref{Jacobi}. Moreover, the vector $K$ induced a vector field along  $\Lambda$, which has to coincide with the variational vector field of $\Lambda$, because it is also a Jacobi field along every $\gamma_s$ with the same initial conditions in $t_0$ (remember that the Berwald connection is torsion-free and then $D_{\gamma_s}^{\dot\gamma_s}K=D_\alpha^{\dot\gamma_s}\dot\gamma_s$). This concludes that the flow of $K$ maps $\gamma$ into the $\gamma_s$ and then it preserves geodesics, as desired.
\end{proof}
\begin{prop}\label{thm:affineadmissible}
The sheaf of local affine vector fields of a spray is admissible.
\end{prop}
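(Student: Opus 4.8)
The plan is to deduce admissibility, i.e.\ both the \emph{unique continuation property} and \emph{bounded rank} (Subsection~\ref{sub:admissible}), directly from the differential characterization in Proposition~\ref{thm:charaffinefields}. Let us denote by $\mathcal F^S$ the sheaf of local affine fields of the spray $(A,S)$. The guiding observation is that equation \eqref{Jacobi} is a linear second order ordinary differential equation for $K$ along each geodesic of the spray; hence, along a fixed geodesic $\gamma$, the field $K$ is the unique solution of \eqref{Jacobi} prescribed by the two initial data $K(\gamma(a))$ and $D^{\dot\gamma}_\gamma K(a)$.

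First I would prove a \emph{propagation lemma}: fix $p\in M$ and suppose $K$ is an affine field defined near $p$ whose $1$-jet at $p$ vanishes, i.e.\ $K(p)=0$ and all first partial derivatives of $K$ vanish at $p$ in some chart. For every admissible direction $v\in A_p$ let $\gamma_v$ be the geodesic with $\dot\gamma_v(0)=v$. The initial datum $D^{\dot\gamma_v}_{\gamma_v}K(0)$ equals $\nabla^v_v K(p)$, which in coordinates reads $v^j\partial_j K^i(p)+\Gamma^i_{\,jk}(p,v)\,v^j K^k(p)$, so it is determined by the ordinary $1$-jet of $K$ at $p$; here the degree-zero homogeneity of the Christoffel symbols is what keeps this a finite-dimensional datum valid for all $v$ simultaneously. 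Thus a vanishing $1$-jet forces $K(p)=0$ and $D^{\dot\gamma_v}_{\gamma_v}K(0)=0$ for every $v$, and by uniqueness for \eqref{Jacobi} we get $K\equiv 0$ along each $\gamma_v$. Since the geodesics issuing from $p$ in admissible directions sweep out a neighborhood of $p$, the field $K$ vanishes near $p$.

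With the propagation lemma in hand, unique continuation follows by a connectedness argument. Given $K\in\mathcal F^S(U)$ with $U$ connected and $K|_V=0$ on a nonempty open $V\subset U$, set $Z=\{p\in U:\ j^1_pK=0\}$. Then $Z$ is closed by continuity of the $1$-jet and contains $V$; the propagation lemma shows $K$, hence $j^1K$, vanishes on a neighborhood of every point of $Z$, so $Z$ is open. Connectedness of $U$ gives $Z=U$, whence $K\equiv 0$. For bounded rank, fix any $p\in U$ and consider the linear evaluation $\mathcal F^S(U)\ni K\longmapsto j^1_pK$ into the fiber of the first jet bundle $J^1(TM)$ at $p$, a vector space of dimension $n+n^2=n(n+1)$. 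By the propagation lemma together with unique continuation this map is injective, so $\dim\big(\mathcal F^S(U)\big)\le n(n+1)$ for every connected $U$, which is the required uniform bound.

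The step demanding most care is the propagation lemma, and specifically the claim that the geodesics emanating from $p$ in admissible directions recover a \emph{full} neighborhood of $p$ on which $K$ must vanish: for a spray defined only on a proper conic subset $A$ one cannot simply invoke the local diffeomorphism property of the spray exponential on an entire neighborhood of the origin, and the open--closed argument for $Z$ must be arranged so that the admissible cones at nearby points overlap and fill in. The remaining ingredient, that the single finite-dimensional datum $j^1_pK$ controls the initial velocities $D^{\dot\gamma_v}_{\gamma_v}K(0)$ for \emph{all} admissible $v$ at once, is precisely what the homogeneity relations \eqref{Pprop} and the degree-zero homogeneity of the Christoffel symbols guarantee.
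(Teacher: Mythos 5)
Your overall strategy coincides with the paper's: both proofs rest on Proposition~\ref{thm:charaffinefields}, on the observation that the initial data $K(p)$ and $v\mapsto\nabla^v_vK(p)$ of the Jacobi equation \eqref{Jacobi} are controlled by the first jet of $K$ at $p$ (formula \eqref{nablaK}, exploiting the degree-zero homogeneity of the Christoffel symbols), and on the resulting bound $n^2+n$ obtained from injectivity of the $1$-jet evaluation. The difference lies in how one passes from ``$K$ vanishes along geodesics'' to ``$K$ vanishes on an open set'', and here your argument has a genuine gap that you yourself flag but do not close. Your propagation lemma asserts that the geodesics issuing from $p$ in admissible directions sweep out a neighborhood of $p$. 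For a spray defined only on a proper conic subset $A\subset TM$ (the setting of Section~\ref{sec:sprays}, relevant e.g.\ for Kropina-type or Lorentz--Finsler structures), this is false: the exponential map at $p$ is defined only on the open cone $A_p$, and its image near $p$ is a cone-shaped region, not a full neighborhood of $p$. Consequently neither the local vanishing of $K$ near $p$ nor the openness of your set $Z=\{p:\ j^1_pK=0\}$ follows, and the suggestion that ``the admissible cones at nearby points overlap and fill in'' is circular as stated, since propagating from a nearby point $p'$ requires first knowing $j^1_{p'}K=0$.

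The paper closes exactly this gap by basing the geodesics at an auxiliary point. One picks $q$ on the boundary of the zero set of $K$ and, invoking \cite[Remark~3.2]{JaPi06a}, a geodesic from some $p$ to $q$ with $p$ and $q$ non-conjugate; then $\exp_p$ restricts to a diffeomorphism from a neighborhood $\hat V$ of a \emph{nonzero} admissible vector onto a full neighborhood $\tilde V$ of $q$. Since $K$ vanishes to first order on an open subset of $\tilde V$, the corresponding Jacobi fields vanish identically along the geodesics reaching that subset, which forces $K(p)=0$ and $\nabla^{v}_{v}K(p)=0$ for $n$ linearly independent $v$, hence for all admissible $v$ by \eqref{nablaK}; therefore $K\equiv0$ on all of $\tilde V$, contradicting the choice of $q$. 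If you restrict to sprays with $A=T_0M$ your radial argument is fine, but to prove the proposition as stated you should replace the propagation lemma by this non-conjugate basepoint device; the same device underlies the bounded-rank estimate in the paper's proof as well.
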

\begin{proof}
Observe that given a point $q\in M$, it is always possible to find a geodesic $\gamma:[0,1]\rightarrow M$ for the spray $S$ such that $\gamma(1)=q$ and  $p=\gamma(0)$ and $q$ are non-conjugate along $\gamma$ (see \cite[Remark 3.2]{JaPi06a}). Let us see first that the sheaf of local affine vector fields satisfies the unique continuation property. Assume that $U$ is a connected open subset of $M$ that admits an affine field $K$ which is zero in an open subset $V\subset U$. If  $K$ is not zero in all $U$, then as $U$ is connected there exists a point $q\in U$ in the boundary of the zeroes of $K$. Choosing a geodesic $\gamma$ as above with image in $U$, as $p$ and $q$ are non-conjugate, we can find open subsets $\tilde{V}\subset U$ of $q$ and $\hat{V}\subset T_pM$ such that  $\exp_p:\hat{V}\rightarrow \tilde{V}$ is a diffeomorphism. Observe that $K$ restricted to geodesics is a Jacobi field, and recall that a Jacobi field of  a geodesic $\sigma:[a,b]\rightarrow M$  is determined by $K(\dot\sigma(t_0))$ and $\nabla_{\dot\sigma}^{\dot\sigma}K(t_0)$ with $t_0\in [a,b]$. Moreover, the operator $A_p\ni v\rightarrow \nabla_v^vK$ can be expressed in terms of a frame $E_1,E_2,\ldots,E_n$ in an open subset of $p$ as
\begin{equation}\label{nablaK}
\nabla_v^v K=\sum_{i,j=1}^n(v^i E_i(K^j) E_j+v^i K^j \nabla^v_{E_i}E_j),
\end{equation}
with $v=\sum_{i=1}^n v^i E_i$ and $K=\sum_{i=1}^n K^i E_i$. As $K$ is zero in an open subset of $\tilde{V}$ (this is because $q$ is in the boundary of the set of zeroes of $K$), it follows that $K(p)=0$ and $\nabla_{v_i}^{v_i}K (p)=0$ for $v_1,v_2,\ldots,v_n$ linearly independent vectors in $\tilde{V}$ (which is always possible because $\tilde{V}$ is open). We can then construct a linear system of equations using \eqref{nablaK} for $v_i$, $i=1,\ldots,n$:
\[ \sum_{i=1}^n v_l^i E_i(K^j)=0\]
where $v_l=\sum_{i=1}^nv_l^i E_i$ and $l,j=1,\ldots,n$. As $v_1,\ldots,v_n$ are linearly independent, the square matrix $\{v_l^i\}$ has rank equal to $n$ and then, we conclude that $E_i(K^j)=0$ for $i,j=1,\ldots,n$ in the point $p$. This implies
using \eqref{nablaK} that $A_p=0$ for every admissible $v$ and then that $K$ is zero in all $\tilde{V}$ contradicting that $q$ belongs to the boundary of the zeroes of $K$.

 Let us see now that it is bounded rank. Given an open subset $U$ and an affine field $K$ on $U$, choose as above a geodesic $\gamma:[0,1]\rightarrow U$ with $p=\gamma(0)$ and $q=\gamma(1)$ non-conjugate along $\gamma$, $V\subset U$ a neighborhood of $q$ and $\hat{V}\subset T_pM$ such that $\exp_p:\hat{V}\rightarrow V$ is a diffeomorphism.  Now observe that $K(p)$ and the operator $A_p$ (defined above) determine the value of $K$ in $V$, and by the unique continuation property in all $U$. The identity \eqref{nablaK} implies that $K$ and $A_p$ are determined by $K^i$, $E_i(K^j)$, $i,j=1,\ldots,n$ in any frame $E_1,E_1,\ldots,E_n$ and then the dimension of affine fields in $U$ is bounded by $n^2+n$.

\end{proof}
Given a vector field $X\in {\mathfrak X}(U)$ we will denote by $X^c$, its natural lift to $TU$. The following results is well-known (see \cite{SziToth11}).
\begin{prop}
A  vector field $X\in {\mathfrak X}(U)$ is affine for the spray manifold $(M,S)$ if and only if $[X^c,S]=0$.
\end{prop}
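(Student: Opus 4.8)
The plan is to reduce both conditions to a statement about commuting flows, the key observation being that the two natural flows in play are tangent lifts of flows on the base. First I would recall that, by the defining property of the natural (complete) lift, if $\phi_s$ denotes the local flow of $X$ on $U$, then the local flow of $X^c$ on $TU$ is the tangent map $T\phi_s=\mathrm d\phi_s$. On the other hand, writing $\psi_t$ for the local flow of the spray $S$ on the conic set $A$, the definition of $S$ says exactly that $\psi_t$ is the geodesic flow: for $v\in A_p$ one has $\psi_t(v)=\dot\gamma_v(t)$, where $\gamma_v$ is the geodesic of the spray determined by $\gamma_v(0)=p$, $\dot\gamma_v(0)=v$. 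Equivalently, a curve $\sigma$ in $M$ with admissible velocity is a geodesic of the spray if and only if $t\mapsto\dot\sigma(t)$ is an integral curve of $S$.

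With these interpretations, I would rephrase the affine condition. By definition $X$ is affine precisely when its flow $\phi_s$ sends geodesics of the spray to geodesics. Given a geodesic $\gamma_v$, the image curve $t\mapsto\phi_s(\gamma_v(t))$ has velocity $\mathrm d\phi_s(\dot\gamma_v(t))=T\phi_s(\psi_t(v))$ and initial velocity $T\phi_s(v)$. By the characterization recalled above, this image is a geodesic if and only if $t\mapsto T\phi_s(\psi_t(v))$ is an integral curve of $S$, that is, if and only if $T\phi_s(\psi_t(v))=\psi_t(T\phi_s(v))$. Letting $v$ and $s$ range, $X$ is affine if and only if $T\phi_s\circ\psi_t=\psi_t\circ T\phi_s$ on the common domain of definition; equivalently, the spray $S$ is invariant under the flow of $X^c$.

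To finish, I would invoke the standard identity $\mathcal L_{X^c}S=[X^c,S]$ together with the fact that a vector field is invariant under the flow of another exactly when their bracket vanishes. Applied to $X^c$ and $S$, this gives the equivalence of invariance of $S$ under the flow of $X^c$ with $[X^c,S]=0$, and hence with $X$ being affine.

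The step I expect to require the most care is the bookkeeping forced by the geometry of sprays: $S$ is defined only on the conic open set $A\subset TM$, no completeness is assumed, and the flows $\phi_s$ and $\psi_t$ are only locally defined, so every identity such as $T\phi_s\circ\psi_t=\psi_t\circ T\phi_s$ has to be interpreted on the overlap of the relevant domains. In particular one must track that, when $X$ is affine, the tangent flow $T\phi_s$ carries $A$ into $A$ (so that $\psi_t$ may be composed with it), which is exactly the invariance of $A$ under $\phi_s$ stemming from the fact that geodesics have velocities in $A$; this tangency of $X^c$ to $A$ is what lets the bracket $[X^c,S]$ be read as a genuine vector field on $A$ and the commuting-flows argument be run entirely there.
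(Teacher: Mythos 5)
Your argument is correct, but note that the paper does not actually prove this proposition: it states it as well known and refers to \cite{SziToth11}, so there is no in-paper proof to compare against. Your route is the standard flow-theoretic one and it works: the flow of the complete lift $X^c$ is $T\phi_s$, the flow of $S$ is the geodesic flow $\psi_t(v)=\dot\gamma_v(t)$, affineness of $X$ (in the parametrized sense, which is the sense the paper uses, cf.\ the proof of Proposition~\ref{thm:charaffinefields}) is equivalent to $T\phi_s\circ\psi_t=\psi_t\circ\psi_t^{-1}\circ\psi_t\circ T\phi_s=\psi_t\circ T\phi_s$ on overlapping domains by uniqueness of integral curves of $S$, and this commutation is equivalent to $\mathcal L_{X^c}S=[X^c,S]=0$. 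The domain bookkeeping you flag is genuinely the only delicate point, and it is handled by the facts that $A$ is open in $TM$ (so $X^c$ restricts to a vector field on $TU\cap A$ and its local flow there is the restriction of $T\phi_s$) and that geodesic velocities lie in $A$ by definition (so in the forward direction the image curves automatically have admissible velocity, and in the converse direction being a geodesic is a local property in $t$). An alternative, purely internal route would be to combine your computation with the paper's own Proposition~\ref{thm:charaffinefields}, which characterizes affine fields by the Jacobi equation \eqref{Jacobi}; your direct argument is cleaner and avoids the Berwald-connection machinery entirely, which is presumably why the paper relegates the statement to a citation.
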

In particular, the last proposition implies that the sheaf of affine fields is analytically determined. Indeed, assume that $TU\subset TM$ admits an analytic Riemannian metric $g$ and an analytic orthogonal frame $E_1,E_2,\ldots, E_{2n}$. Then the collection ${\mathcal A}(U)$ of functions consists in one function  $F:\mathfrak X^\omega(U)\to C^\omega(U)$ defined as $F(X)=\sum_{i=1}^{2n} g([X^c,S],E_i)^2$.

\smallskip

 For our main result on this section, two additional ingredients are needed. On the one hand, the following lemma which proof is a straightforward consequence of the fact that the exponencial map is a local diffeomorphism in conjugate points.
%
\begin{lemma}\label{previouslemma}
Let $(M,S)$ be a spray and assume that $\gamma:[0,1]\rightarrow M$ is a geodesic of $(M,S)$ without self-intersections and such that $\gamma(0)$ does not have any conjugate point along $\gamma$. Then for every $t_0\in (0,1)$, there exists a convex open subset $\Omega\subset T_{\gamma(0)}M$ where the exponencial map $\exp_{\gamma(0)}$ is defined and it is a diffeomorphism onto its image, which contains the segment $\gamma|_{[t_0,1]}$.
\end{lemma}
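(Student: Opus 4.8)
The plan is to combine three ingredients: the absence of conjugate points, which makes $\exp_{\gamma(0)}$ a local diffeomorphism all along the radial segment determined by $\dot\gamma(0)$; the absence of self-intersections, which makes $\exp_{\gamma(0)}$ injective on that segment; and a compactness argument upgrading ``local diffeomorphism, injective on a compact set'' to ``diffeomorphism on a neighborhood of that set.'' Write $v_0=\dot\gamma(0)$; since $\gamma$ is a geodesic of $(M,S)$ we have $v_0\in A_{\gamma(0)}$, and because $A_{\gamma(0)}$ is an open cone the segment $L_{t_0}:=\{t\,v_0:t\in[t_0,1]\}$ is a compact straight segment contained in $A_{\gamma(0)}$ and bounded away from the origin for every fixed $t_0\in(0,1)$. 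By the homogeneity of geodesics, $\exp_{\gamma(0)}(t\,v_0)=\gamma(t)$ for $t\in[0,1]$, so $L_{t_0}$ lies in the open domain of $\exp_{\gamma(0)}$ and $\exp_{\gamma(0)}(L_{t_0})=\gamma\big([t_0,1]\big)$.

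First I would record that, as $\gamma(0)$ has no conjugate point along $\gamma$, the differential $\mathrm d(\exp_{\gamma(0)})_{t v_0}$ is an isomorphism for every $t\in(0,1]$; this is the standard identification (valid for sprays, through the correspondence between Jacobi fields along $\gamma$ vanishing at the endpoints and the kernel of $\mathrm d\exp_{\gamma(0)}$) of conjugate points with the critical points of the exponential map. Hence the set $U$ of points of the domain at which $\mathrm d\exp_{\gamma(0)}$ is nonsingular is open and contains $L_{t_0}$, and on $U$ the map $\exp_{\gamma(0)}$ is a local diffeomorphism. Next, since $\gamma$ has no self-intersections it is injective on $[0,1]$, and therefore $\exp_{\gamma(0)}$ is injective on $L_{t_0}$: an equality $\exp_{\gamma(0)}(t_1 v_0)=\exp_{\gamma(0)}(t_2 v_0)$ gives $\gamma(t_1)=\gamma(t_2)$, whence $t_1=t_2$.

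The key step is the elementary lemma that a local diffeomorphism which is injective on a compact set $K$ is injective on some open neighborhood of $K$. I would prove it by contradiction: choosing distinct points $x_n\neq y_n$ with $\exp_{\gamma(0)}(x_n)=\exp_{\gamma(0)}(y_n)$ inside shrinking neighborhoods of $K=L_{t_0}$, compactness yields subsequences $x_n\to x$ and $y_n\to y$ with $x,y\in L_{t_0}$ and $\exp_{\gamma(0)}(x)=\exp_{\gamma(0)}(y)$; injectivity on $L_{t_0}$ forces $x=y$, but then local injectivity near this common point contradicts $x_n\neq y_n$ for large $n$. This produces an open set $W$ with $L_{t_0}\subset W\subset U$ on which $\exp_{\gamma(0)}$ is simultaneously injective and a local diffeomorphism, hence a diffeomorphism onto its image.

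Finally I would shrink $W$ to a convex neighborhood. Fix an auxiliary inner product $\langle\,\cdot\,,\cdot\,\rangle$ on $T_{\gamma(0)}M$. Since $L_{t_0}$ is compact and $W$ open, some $\varepsilon$-tube of $L_{t_0}$ lies in $W$; inside it I set $\Omega$ to be the finite solid cylinder $\Omega=\{w: t_0-\delta<\langle w,v_0\rangle/|v_0|^2<1+\delta,\ \mathrm{dist}(w,\R v_0)<\varepsilon'\}$, with $\delta,\varepsilon'$ small enough that $\Omega\subset W$. Being the intersection of a slab with a tube about a line, $\Omega$ is convex and open; it contains $L_{t_0}$ and is contained in $W\subset U\subset A_{\gamma(0)}$. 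Consequently $\exp_{\gamma(0)}$ restricts to a diffeomorphism of $\Omega$ onto its image, which contains $\exp_{\gamma(0)}(L_{t_0})=\gamma([t_0,1])$, as required. The one genuine subtlety, and the reason the statement is phrased for $t_0>0$ rather than for the full geodesic, is exactly this last compatibility: a convex set meeting the ray near $t=0$ would have to approach the vertex of the cone $A_{\gamma(0)}$, where $\exp_{\gamma(0)}$ need not be defined and where convexity cannot be reconciled with staying inside the cone; keeping $L_{t_0}$ bounded away from the origin removes this obstruction.
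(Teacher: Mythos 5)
Your proposal is correct and follows exactly the route the paper has in mind: the paper offers no written proof beyond the remark that the lemma ``is a straightforward consequence of the fact that the exponential map is a local diffeomorphism'' at non-conjugate points, and your argument (nonsingularity of $\mathrm d\exp_{\gamma(0)}$ along the segment from the no-conjugate-point hypothesis, injectivity on the segment from the no-self-intersection hypothesis, the standard compactness upgrade to injectivity on a neighborhood, and the explicit convex slab-and-tube construction kept away from the vertex of the cone) is precisely the missing elaboration. Nothing to correct.
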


 On the other hand, we also need the notion of reverse spray. Given $(M,S)$ a spray over $A$, we define the {\em reverse spray} $(M,\tilde{S})$ over $\tilde{A}=\{v\in TM: -v\in A\}$ as the spray whose coefficients are $\tilde{G}^i(x,y)=G^i(x,-y)$, being $G^i$ the spray coefficients of $S$ (in some natural coordinate system of $TM$). Geodesics of $(M,\tilde{S})$ are exactly the geodesics of $(M,S)$ with reverse parametrization.
%

\begin{prop}\label{regularaffine}
The sheaf of local affine vector fields of an analytic spray is regular.
\end{prop}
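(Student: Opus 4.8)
The plan is to derive regularity from Proposition~\ref{thm:ADimpliesreg}. We already know that the sheaf $\mathcal F$ of local affine fields of the analytic spray $(A,S)$ is admissible (Proposition~\ref{thm:affineadmissible}) and, via the characterization $[X^c,S]=0$ together with the explicit family $F(X)=\sum_i g([X^c,S],E_i)^2$, that it is analytically determined. Hence it suffices to verify the local analytic extension hypothesis of Proposition~\ref{thm:ADimpliesreg}: for each $p\in M$ I must produce a \emph{connected} neighborhood $\mathcal V_p$ such that every germ $\mathfrak g\in\mathfrak G_q^{\mathcal F}$ at every $q\in\mathcal V_p$ is represented by some $X\in\mathfrak X^\omega(\mathcal V_p)$. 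Once such an analytic representative is found and it agrees with an $\mathcal F$-field on an open set, Lemma~\ref{thm:anextF} upgrades it automatically to an element of $\mathcal F(\mathcal V_p)$, so that $\mathcal V_p$ becomes $\mathcal F$-special for all its points; then $\kappa^{\mathcal F}$ is locally constant, hence constant since $M$ is connected, and $\mathcal F$ is regular.

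The structural engine is Proposition~\ref{thm:charaffinefields}: a field is affine if and only if its restriction to every geodesic solves the second order linear equation \eqref{Jacobi}. Consequently, as already exploited in the proof of Proposition~\ref{thm:affineadmissible}, the value $K(x_0)$ together with the operator $v\mapsto\nabla^v_vK(x_0)$ (the full $1$-jet at $x_0$) determines $K$ on any normal neighborhood of $x_0$ on which $\exp_{x_0}$ is a diffeomorphism, by integrating \eqref{Jacobi} radially along $s\mapsto\exp_{x_0}(sv)$. Since the spray coefficients $G^i$ are analytic, the geodesic flow, the exponential map, and the solutions of \eqref{Jacobi} depend analytically on initial data and parameters; so this radial reconstruction yields an \emph{analytic} field, and in particular every affine field is itself analytic where defined.

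I would fix $p$, choose $\mathcal V_p=\exp_p(\Omega)$ for a small star-shaped $\Omega\subset A_p$ on which $\exp_p$ is an analytic diffeomorphism (so that \emph{all} radial geodesics from $p$ filling $\mathcal V_p$ are free of conjugate points, by Lemma~\ref{previouslemma} applied to short geodesics), and extend a given germ at $q=\exp_p(v_0)$ in two stages. First I extend the representative $K$ to a neighborhood of $p$: the radial geodesic $c(s)=\exp_p(sv_0)$ joining $p$ to $q$ becomes, after reversal, a geodesic $\tilde c$ of the reverse spray $(\tilde A,\tilde S)$ issuing from $q$ and ending at $p$, and since a flow preserves the geodesics of $S$ exactly when it preserves those of $\tilde S$, one has $\mathcal F^{S}=\mathcal F^{\tilde S}$ and $K$ is affine for $\tilde S$ as well. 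By Lemma~\ref{previouslemma} applied to $\tilde S$ and $\tilde c$, $\exp^{\tilde S}_q$ is a diffeomorphism on a convex open set whose image $N_q^-$ is an open set containing $p$ in its interior; integrating \eqref{Jacobi} along all reverse-spray radial geodesics from $q$, starting from the full $1$-jet of $K$ at $q$, produces an analytic field on $N_q^-$ which coincides with $K$ on the (nonempty, open) conic overlap of $N_q^-$ with the original domain of $K$, hence — by Lemma~\ref{thm:anextF} and unique continuation — is the analytic affine extension of $K$ to $N_q^-$. In particular I now read off the full $1$-jet of $K$ at $p$. Second, using this $1$-jet as initial data, I integrate \eqref{Jacobi} radially along $s\mapsto\exp_p(sv)$, $v\in\Omega$, obtaining an analytic field $X$ on all of $\mathcal V_p$ that agrees with the stage-one extension near $p$.

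Finally, $X\in\mathfrak X^\omega(\mathcal V_p)$ coincides with $K$ on the open set near $p$ where both are the radial reconstruction from the common $1$-jet, so $X\big\vert$ is an $\mathcal F$-field on an open subset; by analytic determinacy (Lemma~\ref{thm:anextF}) $X\in\mathcal F(\mathcal V_p)$, and by the unique continuation property $X$ agrees with $K$ throughout the connected common domain, whence $\germ_q(X)=\mathfrak g$. This verifies the hypothesis of Proposition~\ref{thm:ADimpliesreg} and gives regularity. The delicate point — and precisely the reason the reverse spray and Lemma~\ref{previouslemma} are indispensable — is that $S$ is only defined on the cone $A$: a forward exponential from $q$ need neither reach $p$ nor sweep out a neighborhood of it, so one must transport the $1$-jet from $q$ to $p$ through reverse-spray geodesics and only then reconstruct radially from $p$, checking that the two conic reconstructions glue into a single-valued analytic field on the prescribed set $\mathcal V_p$.
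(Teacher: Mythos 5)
Your overall strategy is the paper's: reduce to Proposition~\ref{thm:ADimpliesreg} via admissibility and analytic determinacy, reconstruct affine fields from their $1$-jets by integrating the Jacobi equation \eqref{Jacobi} along geodesics, and use the reverse spray together with Lemma~\ref{previouslemma} to carry the $1$-jet from the point where the germ lives to a base point from which one then reconstructs radially. All of these ingredients appear, in the same roles, in the paper's proof. However, there is one concrete gap in your set-up. You take $\mathcal V_p=\exp_p(\Omega)$ with $\Omega\subset A_p$ star-shaped, and you need $\mathcal V_p$ to be an \emph{open neighborhood of $p$} in order to invoke Proposition~\ref{thm:ADimpliesreg} (its conclusion that $\kappa^{\mathcal F}$ is locally constant rests on $\mathcal V_p$ containing a neighborhood of $p$). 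But a spray is only defined on a conic subset $A\subset T_0M$, and when $A_p$ is a proper open cone of $T_pM$ (Kropina metrics, wind Finslerian structures, general conic pseudo-Finsler metrics), the set $\exp_p(\Omega)$ is a wedge with vertex at $p$ that misses all directions outside $A_p$: it is not a neighborhood of $p$, and in fact need not even contain $p$. You correctly flag that a forward exponential from $q$ need not sweep out a neighborhood of $p$, but the same obstruction bites at $p$ itself, exactly where your construction is anchored.

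The paper circumvents this by putting the arbitrary point at the \emph{far end} of the sweeping geodesics rather than at their common origin: given $q\in M$, one chooses an auxiliary point $p$ and a geodesic $\gamma$ from $p$ to $q$ without conjugate points or self-intersections, and takes as the neighborhood of $q$ the set $U=\exp_p(\tilde U)$, where $\tilde U$ is a neighborhood of the \emph{nonzero} vector $\exp_p^{-1}(q)$ in $T_pM$. Since $A_p$ is open and conic, such a $\tilde U$ lies in $A_p$ and $U$ is a genuine open neighborhood of $q$. The rest of your argument (transporting the $1$-jet of $K$ from a point of its domain back to $p$ along reverse-spray geodesics via Lemma~\ref{previouslemma}, reconstructing on $U$ by Jacobi fields issuing from $p$, and upgrading the analytic extension to an affine field by Lemma~\ref{thm:anextF} and unique continuation) then goes through essentially as you describe. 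So the repair is a reindexing of which point carries the neighborhood rather than a new idea; but as written, the verification of the hypothesis of Proposition~\ref{thm:ADimpliesreg} fails for every genuinely conic spray.
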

\begin{proof}
 We aim to apply Theorem \ref{thm:ADimpliesreg}. It is enough to show that for every point $q\in M$, there exists a neighborhood $U$ of $q$ such that every affine vector field $K$ defined in an open subset $V\subset U$ extends  analytically   to
a vector field $\tilde{K}$ in $U$. Given $q\in M$,  consider first a geodesic $\gamma:[0,1]\rightarrow M$ without self-intersections  such that $q=\gamma(1)$  and there is no conjugate point to $q$ along $\gamma$, denote $p=\gamma(0)$. Let $U$ be an open subset of $M$ such that there exists $\tilde{U}\subset T_pM$ with the restriction $\exp_p:\tilde{U}\rightarrow U$ being a diffeomorphism and such that every $u\in U$ does not have any conjugate point along the geodesic $\gamma_u:[0,1]\rightarrow M$ defined as $\gamma_u(t)=\exp_p(t \exp_p^{-1}(u))$  (observe that \cite[Remark 3.2]{JaPi06a} allows us to ensure the existence of $\gamma$ and $U$ by continuity of solutions of differential equations) and $\gamma_u$ does not have self-intersections. Now, if $K$ is an affine vector field defined in an open subset $V\subset U$, choose any $\tilde{p}\in V$,  and denote $\sigma=\gamma_{\tilde{p}}$. Observe that $\tilde{\sigma}(t)=\sigma(1-t)$ is a geodesic of the reverse spray $(M,\tilde{S})$ without conjugate points to $\tilde\sigma(0)=\tilde{p}$ and without self-intersections. Then we can apply Lemma \ref{previouslemma} for some $t_0$ such that $\tilde{\sigma}(t_0)\in V$, obtaining  open subsets $\Omega\subset T_{\tilde{p}}M$ and  $\tilde{\Omega}\in M$ such that  $\Omega$ is convex, $\tilde{\Omega}$ contains $\tilde\sigma(t_0)$ and $p$, and $\tilde{\exp}_{\tilde{p}}:\Omega\rightarrow \tilde{\Omega}$ is a diffeomorphism. We can assume that the image by $\tilde{\exp}_{\tilde{p}}$ of the segments $\{t u\in T_{\tilde{p}}M: t\in R\}\cap \Omega$ intersects $V$ by making $\Omega$ (and $\tilde{\Omega}$) smaller if necessary.   Recall that the restriction of $K$ to geodesics is a Jacobi field (Proposition \ref{thm:charaffinefields}). We can now extend $K$ to a vector field $\tilde{K}$ defined in $\tilde{\Omega}$ by using Jacobi fields along geodesics of $(M,\tilde{S})$ departing from $\tilde{p}$.  By Lemma \ref{thm:anextF}, $\tilde{K}$ is an affine field. In this way, we get a vector field in a neighborhood of $p$. Now we can define an analytic vector field on $U$ using the exponencial map $\exp_p$, Jacobi fields and the initial conditions provided by this extension, which coincides with the initial $K$ in a neighborhood of $\tilde{\sigma}(t_0)$ (just consider the geodesics from $p$ that remain in $\tilde{\Omega}$). This concludes the proof.

\end{proof}
\appendix

\section{Dimension of the group of pseudo-Finsler isometries}
\label{sec:dimisopseudoFinsler}
Let $(M,\mathcal T,L)$ be a pseudo-Finsler structure of index $k$, as defined in Section~\ref{sec:pseudoFinsler}, and let
$g^L$ be the associated Sasaki metric on $\mathcal T$.

It is proved in \cite[Lemma~1]{GalPic14} that, given any $C^2$-diffeomorphism $f:M\to M$, one
has the following commutative diagrams:
\begin{equation}\label{eq:2commdiagramas}
\xymatrix{T_v(TM)\ar[rr]^{\mathrm d^2f}\ar[d]_{\mathrm d\pi}&&T_{\mathrm df(v)}(TM)\ar[d]^{\mathrm d\pi}\cr T_pM\ar[rr]_{\mathrm df}&&T_{f(p)}M}
\qquad \xymatrix{T_pM\ar[rr]^{\mathrm df}\ar[d]_{\mathfrak i_v}&&T_{f(p)}M\ar[d]^{\mathfrak i_{\mathrm df(v)}}\cr \mathrm{Ver}_v\ar[rr]_{\mathrm d^2f}&&\mathrm{Ver}_{\mathrm d^2f(v)}}
\end{equation}
Using this diagram, it is proved in \cite{GalPic14} that if $f$ is an isometry of $(M,\mathcal T,L)$,
then $\mathrm df:\mathcal T\to\mathcal T$ is an isometry of $g^L$.
\smallskip

We have the canonical upper bound on the dimension of the isometry group of a pseudo-Finsler structure.

The map $f\mapsto\mathrm df$ gives a natural injection of $\mathrm{Iso}(M,\mathcal T,L)$ into the isometry group of the Sasaki pseudo-Riemannian metric $g^L$. Since $\mathrm{dim}(\mathcal T)=2n$, then we have
$\mathrm{dim}\left(\mathrm{Iso}(M,\mathcal T,L)\right)\le n(2n+1)$, where $n=\mathrm{dim}(M)$.
However, the following sharper estimate holds:
\begin{prop}
Let $(M^n,\mathcal T,L)$ be a  pseudo-Finsler manifold. Then, the Lie group $\mathrm{Iso}(M,\mathcal T,L)$ has dimension less than or equal to $\frac12n(n+1)$.
\end{prop}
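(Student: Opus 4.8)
$\dim\bigl(\mathrm{Iso}(M,\mathcal T,L)\bigr)\le \tfrac12 n(n+1)$, where $n=\dim(M)$.

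The plan is to improve the crude bound $\dim\bigl(\mathrm{Iso}(M,\mathcal T,L)\bigr)\le n(2n+1)$, which came from injecting $f\mapsto\mathrm df$ into the isometry group of the $2n$-dimensional Sasaki manifold $(\mathcal T,g^L)$, by exploiting the special structure of the image of this injection. The key observation is that a Sasaki isometry arising from a pseudo-Finsler isometry is far from arbitrary: by the commutative diagrams \eqref{eq:2commdiagramas}, the lifted map $\mathrm df$ is \emph{completely determined by the $1$-jet of $f$ at a single point}. Indeed, for a semi-Riemannian (or here, a Finslerian) isometry, $\mathrm df$ is fixed once one knows $f(p)$ and $\mathrm df(p)$, so the injection $f\mapsto \mathrm df$ cannot see more parameters than the dimension of the space of admissible $1$-jets at $p$. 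My plan is therefore to bound $\dim\bigl(\mathrm{Iso}(M,\mathcal T,L)\bigr)$ by counting the degrees of freedom in such a $1$-jet.

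First I would fix a point $p\in M$ and a reference vector $v\in\mathcal T_p$, and consider the orbit map evaluation that sends an isometry $f$ to the pair $\bigl(f(p),\mathrm df(p)\bigr)$. An isometry that fixes $p$ and whose differential at $p$ is the identity is determined on the connected component (this is precisely the unique-continuation/rigidity phenomenon underlying the whole paper, here applied to the semi-Riemannian manifold $(\mathcal T,g^L)$ via the lift), so the map $f\mapsto\bigl(f(p),\mathrm df(p)\bigr)$ is injective on a neighborhood of the identity, and hence $\dim\bigl(\mathrm{Iso}(M,\mathcal T,L)\bigr)$ is at most $n$ (for the choice of $f(p)$) plus the dimension of the group of admissible linearizations $\mathrm df(p)\in\mathrm{GL}(T_pM)$. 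The crux is then to bound this isotropy contribution: an isometry fixing $p$ must map $\mathcal T_p$ to itself and preserve the fundamental tensor $g_v$ for every $v$, so its differential lies in the linear automorphism group of the pointwise pseudo-Finsler structure on $T_pM$. The main step is to show that this linear automorphism group has dimension at most $\tfrac12 n(n-1)$, i.e.\ that of the orthogonal group $\mathrm O(g)$ of an auxiliary nondegenerate quadratic form.

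The hard part will be this last bound on the linear isotropy. The natural argument is that any linear map $A\in\mathrm{GL}(T_pM)$ preserving $L|_{\mathcal T_p}$ must preserve the associated family of Hessians $\{g_v\}_{v\in\mathcal T_p}$, and in particular a canonically constructed nondegenerate symmetric bilinear form obtained by averaging or by evaluating at a distinguished direction; such $A$ then lies in a group conjugate to $\mathrm O(n-k,k)$, whose dimension is exactly $\tfrac12 n(n-1)$. Adding the $n$ parameters from the position $f(p)$ gives $n+\tfrac12 n(n-1)=\tfrac12 n(n+1)$, which is the claimed bound. I expect the delicate point to be producing, at the purely pointwise level, a single nondegenerate quadratic form on $T_pM$ canonically attached to the pseudo-Finsler structure and preserved by every linear automorphism, since in the indefinite (conic) setting one cannot integrate $g_v$ over the whole sphere $\mathcal S_p$ as in the Finsler case; the Sasaki metric and the diagrams \eqref{eq:2commdiagramas} are precisely the tools that allow one to transfer the rigidity of $g^L$ down to such a form.
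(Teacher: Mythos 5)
Your reduction to the $1$-jet at $p$ is sound and is essentially the paper's route through the Sasaki lift: by the diagrams \eqref{eq:2commdiagramas}, $\mathrm d(\mathrm df)_v$ is block diagonal with both blocks equal to $\mathrm df_p$, so $\mathrm df$ (hence $f$) is determined by the pair $\bigl(f(p),\mathrm df_p\bigr)$, and everything comes down to bounding by $\tfrac12 n(n-1)$ the dimension of the group $H_p$ of linear automorphisms of the pointwise structure $(\mathcal T_p,L\vert_{\mathcal T_p})$. The gap is in your treatment of that bound, which you correctly identify as the crux but do not actually carry out. You propose to exhibit a single canonical nondegenerate symmetric bilinear form on $T_pM$ preserved by all of $H_p$, ``by averaging or by evaluating at a distinguished direction''; you yourself note that averaging is unavailable on a non-compact cone, and evaluating at a distinguished direction is no better, since a generic conic pseudo-Finsler structure has no $H_p$-invariant direction, and an element $A\in H_p$ only satisfies $A^*g_{Av}=g_v$ --- it intertwines the forms along its own orbit rather than preserving any fixed one. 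Neither the Sasaki metric nor the diagrams produce such an invariant form on $T_pM$, so the central inequality remains unproved in your write-up.

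The bound on $H_p$ is nevertheless true, and the way to obtain it (and the count the paper is implicitly making) is a second orbit--stabilizer argument rather than an invariant form. Fix $v_0\in\mathcal T_p$. Since $L\circ A=L$ and $\mathrm dL_{v_0}=2g_{v_0}(v_0,\cdot)\neq 0$, the $H_p$-orbit of $v_0$ lies in the level hypersurface $L^{-1}\bigl(L(v_0)\bigr)\cap\mathcal T_p$, of dimension $n-1$; and the stabilizer of $v_0$ in $H_p$ preserves the \emph{single} nondegenerate form $g_{v_0}$ and fixes $v_0$, hence embeds into the stabilizer of a nonzero vector in $\mathrm O(g_{v_0})$, whose dimension is $\tfrac12(n-1)(n-2)$. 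Therefore $\dim H_p\le (n-1)+\tfrac12(n-1)(n-2)=\tfrac12 n(n-1)$, and adding the $n$ parameters for $f(p)$ gives $\tfrac12 n(n+1)$. (Equivalently, one can run orbit--stabilizer once, directly on $\mathcal T$ for the lifted action: the orbit of $v_0$ lies in $L^{-1}\bigl(L(v_0)\bigr)$, of dimension $2n-1$, and the stabilizer injects into $\mathrm O(g_{v_0})_{v_0}$ by the block-diagonal form of $\mathrm d(\mathrm df)_{v_0}$, again giving $\tfrac12 n(n+1)$.) With this replacement your argument closes; without it, the key step is missing.
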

\begin{proof}
Using \eqref{eq:2commdiagramas}, one sees that
using the identifications $i_v:T_pM\stackrel\cong\longrightarrow\mathrm{Ver}_v$ and $\mathrm d\pi_v:\mathrm{Hor}_v^F\stackrel\cong\longrightarrow T_pM$,
for all $f\in\mathrm{Iso}(M,\mathcal T,L)$, both restrictions of $\mathrm d(\mathrm df)$ to $\mathrm{Ver}_v$ and to $\mathrm{Hor}_v^F$ of $\mathrm d(\mathrm df)_v$ coincide with $\mathrm df_v:T_pM\to T_{f(p)}M$.
This gives  $\mathrm{dim}\big(\mathrm{Iso}(M,\mathcal T,L)\big)\le\frac12n(n+1)$.
\end{proof}

\section{Real-analytic compatible connections}\label{app:RAcompconnections}
In this appendix we give some detail on the proof that real-analytic $G$-structures on a real-analytic differential manifold admit a real-analytic compatible connection.
\begin{lemma}\label{thm:RAsections}
Let $\pi:E\to M$ be a real-analytic fiber bundle on a real-analytic differentiable manifold. If $\pi$ admits a smooth section, then it admits a real-analytic section.
\end{lemma}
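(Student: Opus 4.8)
The plan is to reduce the existence of a real-analytic section to the classical Grauert--Morrey embedding and approximation theorems, followed by a reparametrization of the base that turns an approximate section into a genuine one. First I would invoke the Grauert--Morrey embedding theorem to realize the total space $E$ as a closed real-analytic submanifold of some Euclidean space $\mathds R^N$, and fix a real-analytic tubular neighborhood of $E$ in $\mathds R^N$: an open set $\mathcal N\supset E$ together with a real-analytic retraction $r\colon\mathcal N\to E$. Such a retraction exists because the nearest-point projection onto a real-analytic submanifold (with respect to the induced metric, via the normal exponential map) is real-analytic on a sufficiently thin neighborhood, and it satisfies $r\vert_E=\mathrm{id}_E$.

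Let $s\colon M\to E\subset\mathds R^N$ be the given smooth section, so that $\pi\circ s=\mathrm{id}_M$. By the Whitney--Grauert approximation theorem, smooth maps from a real-analytic manifold into $\mathds R^N$ can be approximated arbitrarily well, in the strong (Whitney fine) $C^1$-topology, by real-analytic maps. I would choose a real-analytic $\phi\colon M\to\mathds R^N$ so close to $s$ that $\phi(M)\subset\mathcal N$; then $r\circ\phi\colon M\to E$ is a real-analytic map which is $C^1$-close to $r\circ s=s$. This map need not be a section, but the composition $\psi:=\pi\circ r\circ\phi\colon M\to M$ is real-analytic and, being $C^1$-close to $\pi\circ s=\mathrm{id}_M$, is a real-analytic diffeomorphism of $M$ (the set of diffeomorphisms being open in the strong $C^1$-topology).

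Finally I would set $\tilde s:=(r\circ\phi)\circ\psi^{-1}$. Since the inverse of a real-analytic diffeomorphism is again real-analytic, $\tilde s$ is a real-analytic map $M\to E$, and $\pi\circ\tilde s=\psi\circ\psi^{-1}=\mathrm{id}_M$, so $\tilde s$ is the desired real-analytic section. The step requiring the most care is the passage through the strong topology: the three closeness requirements --- that $\phi$ land in $\mathcal N$, that $r\circ\phi$ remain close to $s$, and that $\psi$ be a diffeomorphism --- must be controlled simultaneously by a single positive continuous error function on $M$. This is precisely what the fine-topology density of real-analytic maps provides, and it is what makes the argument valid for non-compact $M$; in the compact case the ordinary $C^1$-topology already suffices.
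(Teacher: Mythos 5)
Your proof is correct and takes essentially the same route as the paper: approximate the smooth section $s$ by a real-analytic map $s'$, observe that $\pi\circ s'$ is a real-analytic diffeomorphism because it is close to the identity in the strong topology, and precompose with its inverse to recover a genuine section. The only difference is that you manufacture the real-analytic approximation by hand via the Grauert--Morrey embedding and a real-analytic tubular retraction, whereas the paper simply cites the density of $C^\omega(M,E)$ in $C^\infty(M,E)$ for the strong Whitney topology (Hirsch, Theorem~5.1, p.~65) and the openness of diffeomorphisms (Theorem~1.7, p.~38).
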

\begin{proof}
Let $s$ be a smooth section of $\pi$. By \cite[Theorem 5.1, p.\ 65]{hirsch76}, $C^\omega(M,E)$ is dense in $C^\infty(M,E)$ in the strong Whitney topology. Thus, $s$ admits real-analytic approximations in $C^\infty$. Clearly, if $s'$ is a real-analytic approximation of $s$, $s'$ need not be a section, i.e.,
$\phi=\pi\circ s'$ need not be the identity. However, such a real-analytic approximation $s'$ can be found such that $\pi\circ s'$ is a (real-analytic)
diffeomorphism. This follows from \cite[Theorem 1.7, p.\ 38]{hirsch76}, which says that the set of $C^\infty$-diffeomorphisms is open in the set
$C^\infty$-endomorphism of $M$. The composition $s'\circ\phi^{-1}$ is the desired real-analytic section of $\pi$.
\end{proof}

\begin{cor}\label{thn:RAcompconnections}
Let $M^n$ be a real-analytic manifold, $G\subset\mathrm{GL}(n)$ a Lie subgroup, and let $\mathcal P\subset\mathrm{FR}(TM)$ be a real-analytic $G$-structure on $M$. Then, there exists a real-analytic connection $\nabla$ on $M$ which is compatible with $\mathcal P$.
\end{cor}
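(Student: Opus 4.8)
The plan is to realize compatible connections as the global sections of a single real-analytic fiber bundle over $M$, and then to invoke Lemma~\ref{thm:RAsections}. Recall that a linear connection $\nabla$ on $TM$ is the same datum as a principal connection on the frame bundle $\mathrm{FR}(TM)$, and that $\nabla$ is compatible with the $G$-structure $\mathcal P$ precisely when this principal connection reduces to the sub-bundle $\mathcal P$; equivalently, compatible connections are in natural bijection with principal connections on the $G$-principal bundle $\mathcal P$. The set of principal connections on $\mathcal P$ is an affine space modelled on the sections of the vector bundle $T^*M\otimes\mathrm{ad}(\mathcal P)$, where $\mathrm{ad}(\mathcal P)=\mathcal P\times_G\mathfrak g$ is the adjoint bundle; more precisely, these connections are exactly the sections of an affine bundle $C(\mathcal P)\to M$, the connection bundle. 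Since $\mathcal P$ is a real-analytic $G$-structure and since $\mathrm{ad}(\mathcal P)$, $T^*M$ and $C(\mathcal P)$ are all obtained from $\mathcal P$ and $TM$ by real-analytic natural operations, $C(\mathcal P)$ is a real-analytic fiber bundle over $M$.

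Next I would check that $C(\mathcal P)$ admits a \emph{smooth} section, i.e. that a smooth compatible connection exists at all. This is classical: over each member $U_\alpha$ of a trivializing open cover one chooses a smooth $G$-frame field and declares it parallel, obtaining a smooth connection $\nabla^\alpha$ on $U_\alpha$ compatible with $\mathcal P\vert_{U_\alpha}$. Because the set of compatible connections over any open set is affine (two such connections differ by a section of $T^*M\otimes\mathrm{ad}(\mathcal P)$, and any affine combination of compatible connections is again compatible), a partition of unity $\{\rho_\alpha\}$ subordinate to the cover yields a globally defined smooth compatible connection $\nabla=\sum_\alpha\rho_\alpha\nabla^\alpha$; equivalently, $\nabla$ is a smooth section of $C(\mathcal P)$.

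Finally I would apply Lemma~\ref{thm:RAsections} to the real-analytic fiber bundle $C(\mathcal P)\to M$: since it admits a smooth section, it admits a real-analytic one, and such a real-analytic section is precisely a real-analytic principal connection on $\mathcal P$, that is, a real-analytic linear connection on $M$ compatible with $\mathcal P$. This produces the desired $\nabla$ and completes the argument.

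The point that must be handled carefully is the packaging in the first paragraph: one has to verify that ``being a compatible connection'' is genuinely encoded as ``being a section of one real-analytic fiber bundle over $M$'', so that Lemma~\ref{thm:RAsections} applies verbatim. This rests on the identification of compatible connections with principal connections on $\mathcal P$ (an extension/reduction argument along the inclusion $\mathcal P\hookrightarrow\mathrm{FR}(TM)$) and on the fact that the connection bundle inherits real-analyticity from $\mathcal P$. The remaining ingredients, namely the smooth existence via partitions of unity and the convexity of the space of compatible connections, are routine and I would not belabor them.
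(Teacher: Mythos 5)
Your proposal is correct and follows essentially the same route as the paper: both identify compatible connections with sections of a single real-analytic fiber bundle over $M$ (your connection bundle $C(\mathcal P)$ is exactly the quotient bundle $E=\mathcal E/G$ constructed in the paper's proof) and then apply Lemma~\ref{thm:RAsections}. Your explicit verification that a smooth section exists via a partition-of-unity argument is a welcome addition, since the paper only implicitly relies on the well-known existence of a smooth compatible connection when invoking that lemma.
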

\begin{proof}
This is a direct application of Lemma~\ref{thm:RAsections}, observing that connections in the $G$-principal bundle $\mathcal P\to M$
correspond to sections of a certain fiber bundle $\pi:E\to M$ that has the same regularity as $\mathcal P$. The definition of this fiber bundle is given by the construction below. First, one consider the affine bundle
$\mathcal E\to\mathcal P$ over $\mathcal P$, whose fiber $\mathcal E_p$ at the point $p\in\mathcal P$ consists of all linear maps
$L:T_p\mathcal P\to\mathfrak g$ whose restriction to the vertical space coincides with the canonical isomorphism (here $\mathfrak g$ is the Lie algebra of $G$).
There is a left-action of $G$ on the fibers of $\mathcal E$, defined as follows:
the action of an element $g\in G$ on a linear transformation $L:T_p\mathcal P\to\mathfrak g$ is given by the composition of $L$:
\begin{itemize}
\item with the differential of the action of $g$ in $\mathcal P$, on the right;
\item with the adjoint $\mathrm{Ad}_g$, on the left.
\end{itemize}
Connection forms on $\mathcal P$ are, by definition, equivariant sections of the affine bundle $\mathcal E\to\mathcal P$, see for instance 
\cite[\S~2.2]{PicTau}. In turn, connection forms on $\mathcal P$ correspond to principal connections in the bundle $\mathcal P\to M$
(see \cite[Theorem 2.2.6]{PicTau}~), where the horizontal distribution of a principal connection is given by the kernel of a connection form.
One has a commutative diagram:
\[\xymatrix{\mathcal E\ar[rr]^{g}\ar[d]_{\pi}&&\mathcal E\ar[d]^{\pi}\cr \mathcal P\ar[rr]_{g^{-1}}&&\mathcal P;}\]
in this situation, the quotient $E=\mathcal E/G$ is a fiber bundle over $M$, whose sections correspond to equivariant
sections of $\mathcal E\to\mathcal P$. which give $G$-principal connections in $\mathcal P$, correspond to
sections of $E\to M$.
A real-analytic section of $E$ is a real-analytic connection compatible with $\mathcal P$.
\end{proof}

\end{document}